\documentclass[12pt]{amsart}
\usepackage{hyperref}
\usepackage[all]{xy}
\usepackage{amsmath,amssymb,amsthm,mathrsfs,pifont}
\usepackage{txfonts}
\usepackage{enumerate}
\usepackage{graphicx}
\usepackage{subcaption}
\usepackage[figuresright]{rotating}
\usepackage[margin=24mm]{geometry}
\theoremstyle{plain}
	\newtheorem{thm}{Theorem}[section]
	\newtheorem{prp}[thm]{Proposition}
	\newtheorem{lem}[thm]{Lemma}

\theoremstyle{definition}
	\newtheorem{dfn}[thm]{Definition}
	\newtheorem{ex}[thm]{Example}
	\newtheorem{problem}[thm]{Problem}
	
\theoremstyle{remark}
	\newtheorem{rem}[thm]{Remark}

\newcommand{ \map}{\operatorname{map}}
\newcommand{ \Map}{\operatorname{Map}}

\newcommand{ \cat}{\operatorname{cat}}

\newcommand{ \Prin}{\operatorname{Prin}}
\newcommand{ \eq}{\operatorname{eq}}

\newcommand{ \conj}{\operatorname{conj}}


\newcommand{ \op}{\operatorname{op}}

\newcommand{ \id}{\operatorname{id}}

\newcommand{ \Aut}{\operatorname{Aut}}
\newcommand{ \SO}{\operatorname{SO}}
\newcommand{ \SU}{\operatorname{SU}}
\newcommand{ \Sp}{\operatorname{Sp}}

\newcommand{\cmark}{\ding{51}}
\newcommand{\xmark}{\ding{55}}

\begin{document}
\title{Higher homotopy normalities in topological groups}
\author{Mitsunobu Tsutaya}
\address{Faculty of Mathematics, Kyushu University, Fukuoka 819-0395, Japan}
\email{tsutaya@math.kyushu-u.ac.jp}
\date{}
\subjclass[2010]{55P45 (primary), 55R70 (secondary)}
\thanks{The author was supported by JSPS KAKENHI 19K14535.}
\begin{abstract}
The purpose of this paper is to introduce $N_k(\ell)$-maps ($1\le k,\ell\le\infty$), which describe higher homotopy normalities, and to study their basic properties and examples.
An $N_k(\ell)$-map is defined with higher homotopical conditions.
It is shown that a homomorphism is an $N_k(\ell)$-map if and only if there exists fiberwise maps between fiberwise projective spaces with some properties.
Also, the homotopy quotient of an $N_k(k)$-map is shown to be an $H$-space if its LS category is not greater than $k$.
As an application, we investigate when the inclusions $\SU(m)\to\SU(n)$ and $\SO(2m+1)\to\SO(2n+1)$ are $p$-locally $N_k(\ell)$-maps.
\end{abstract}
\maketitle

\section{Introduction}
\label{section_intro}

A \textit{normal subgroup} $H\subset G$ of a topological group $G$ is defined to be a subgroup closed under the conjugation by $G$.
A crossed module is a natural generalization of a normal subgroup, which plays crucial roles in homotopy theory.
\begin{dfn}
\label{dfn_crossed}
A \textit{(topological) crossed module} is topological groups $H$ and $G$ equipped with continuous group homomorphisms
\[
f\colon H\to G
\quad
\text{and}
\quad
\rho\colon G\to\Aut(H)
\]
satisfying the following conditions:
\begin{enumerate}
\item
$\rho(f(h))(x)=hxh^{-1}$ for any $h,x\in H$,
\item
$f(\rho(g)(x))=gf(x)g^{-1}$ for any $x\in H$ and $g\in G$.
\end{enumerate}
\end{dfn}
In this paper, we propose and investigate higher homotopy variants of crossed module, which sit between homomorphisms (without any special property) and crossed modules.

Ordinary and higher homotopy normalities have been extensively studied.
McCarty \cite{MR176471} defined a homotopy normality as follows: a subgroup $H\subset G$ is homotopy normal if the conjugation $G\times H\to G$, $(g,h)\mapsto ghg^{-1}$ is homotopic to a map into $H$ through a homotopy of maps between topological pairs $(G\times H,H\times H)\to(G,H)$.
James \cite{MR233376} defined a weaker homotopy normality as follows: a subgroup $H\subset G$ is homotopy normal if the conjugation $G\times H\to G$ is homotopic to a map into $H$ through an ordinary homotopy of continuous maps.
These conditions are recognized to be ``the first order'' homotopy normality in our viewpoint.
But these homotopy normalities do not guarantee the existence of a multiplicative structure on the quotient $G/H$.

There have been several works on investigating these kinds of homotopy normalities of Lie groups.
The methods adopted so far have been applications of the (relative) Samelson products \cite{MR176471,MR233376,MR303532,MR0431239,MR660053,MR780349,MR891614,MR3733837} and the Hopf algebra structures on (generalized) cohomology groups \cite{MR1322603,MR1669987,MR1825825,MR2028671,MR2320357}.

For higher homotopy normality, it has been focused on where a given homomorphism is placed in a homotopy fiber sequence \cite{MR2563286,MR2928911}.
In general, a homomorphism $f\colon H\to G$ induces the homotopy fiber sequence
\begin{align}
\label{eq_fibseq_H->G}
\cdots
\to
H
\xrightarrow{f}
G
\to
EH\times_fG
\to
BH
\xrightarrow{Bf}
BG,
\end{align}
where $EH\times_fG$ is the Borel construction by the action of $H$ on $G$ through $f$.
If we suppose $H\subset G$ is a normal subgroup, then the above sequence can be extended as follows:
\[
\cdots
\to
H
\to
G
\to
G/H
\to
BH
\to
BG
\to
B(G/H).
\]
Actually, this is obtained by applying the construction of the homotopy fiber sequence (\ref{eq_fibseq_H->G}) to the homomorphism $G\to G/H$.
This construction is generalized to crossed modules by Farjoun and Segev \cite{MR2563286}.
Although their construction is for simplicial groups, it also works for topological groups.
Conversely, they called $f\colon H\to G$ a homotopy normal map if the homotopy fiber sequence (\ref{eq_fibseq_H->G}) can be extended as follows by some map $BG\to W$:
\begin{align*}
\cdots
\to
H
\xrightarrow{f}
G
\to
EH\times_fG
\to
BH
\xrightarrow{Bf}
BG
\to
W.
\end{align*}
In particular, the homomorphism $H\to\ast$ to the trivial group is homotopy normal in the sense of Farjoun and Segev if and only if $BH$ is a loop space.

The main objective of this paper is to introduce another class of higher homotopy normality called \textit{$N_k(\ell)$-maps} for $1\le k,\ell\le\infty$ (Definition \ref{dfn_(k,l)-normal}).
In particular, we see that a homomorphism is an $N_1(1)$-map if and only if it is homotopy normal in the sense of McCarty (Remark \ref{rem_James}).
Also, we show that a homomorphism $H\to\ast$ to the trivial group is an $N_k(\ell)$-map if and only if $H$ is a $C(k,\ell)$-space (Theorem \ref{thm_Nkl_to_trivial}), which is introduced in \cite{MR2678992}.
As a consequence, $H$ is a $C(\infty,\infty)$-space if and only if $BH$ is an $H$-space.
This implies that our homotopy normality is much weaker than that of Farjoun and Segev.

The advantage of our homotopy normality is a good connection with fiberwise homotopy theory.
Actually, our main theorem is roughly stated as follows:
a homomorphism $H\to G$ is an $N_k(\ell)$-map if and only if the induced fiberwise homomorphism of group bundles $E_kH\times_{\conj}H\to E_kG\times_{\conj}G$ associated to the conjugation action of a group on itself admits a factorization as a fiberwise $A_\ell$-map 
\[
E_kH\times_{\conj}H\to E\to E_kG\times_{\conj}G
\]
through some fiberwise $A_\ell$-space $E$ over $B_kG$ with fiber equivalent to $H$ (Theorem \ref{thm_main}).
The latter condition can be checked by the obstruction theory of fiberwise projective spaces, which provides us a method to study $N_k(\ell)$-maps (Sections \ref{section_projective} and \ref{section_Lie}).

We also discuss when the homotopy quotient $X=EH\times_fG$ of a homomorphism $f\colon H\to G$ is an $H$-space.
We show that $X$ is an $H$-space if $f$ is an $N_k(k)$-map and the LS-category $\cat X$ of $X$ is estimated as $\cat X\le k$ (Theorem \ref{thm_quotient_Hstr}). 

Our argument is based on the category of topological monoids and $A_n$-maps between them constructed in \cite{MR3491849}.
This is just a choice of a model among possible higher categorical settings for $A_n$-maps between $A_\infty$-spaces.
Our definitions and results in the present work should be valid in other settings.

This paper is constructed as follows.
In Section \ref{section_monoid}, we recall the category of topological monoids and $A_n$-maps between them introduced in \cite{MR3491849}.
In Sections \ref{section_associahedra} and \ref{section_an-space}, we reformulate $A_n$-space and $A_n$-map from an $A_n$-space to a topological monoid.
Our composition of $A_n$-maps between topological monoids with an $A_n$-map from an $A_n$-space to a topological monoid is defined to be associative.
In Section \ref{section_fwAn}, we recall the classification theorem of fiberwise $A_n$-spaces established in \cite{MR2876314,MR3327166}, which plays a central role in the proof of the main theorem.
In Section \ref{section_normal}, we introduce $N_k(\ell)$-maps and prove our main result Theorem \ref{thm_main}.
In Section \ref{section_C(k,l)}, we discuss the relation between $C(k,\ell)$-space and $N_k(\ell)$-map.
In Section \ref{section_quotient}, we discuss the $H$-structure on homotopy quotient.
In Section \ref{section_projective}, we recall the basic properties of fiberwise projective spaces and give some criterion for non-$N_k(\ell)$-maps (Theorem \ref{thm_Nkl_fwproj}).
In Section \ref{section_Lie}, we investigate when the inclusions $\SU(m)\to\SU(n)$ and $\SO(2m+1)\to\SO(2n+1)$ are $p$-locally $N_k(\ell)$-maps.

In this paper, we always work in the category of compactly generated spaces $\mathbf{CG}$.
So the adjunction between products and mapping spaces is always available.


\section{Category of topological monoids and $A_n$-maps}
\label{section_monoid}

We begin by recalling the construction of the topologically enriched category of topological monoids and $A_n$-maps introduced in \cite[Section 4]{MR3491849}.
Proofs will be omitted here.
Let $[0,\infty]$ be the one point compactification of $[0,\infty)$, which is homeomorphic to the unit interval $[0,1]$.

\begin{dfn}
Let $f\colon G\to G'$ be a pointed map between topological monoids.
A pair $(\{f_i\}_i,L)$ of a family of maps $\{f_i\colon[0,\infty]^{i-1}\times G^i\to G'\}_{i=1}^n$ and $L\in[0,\infty]$ is called an \textit{$A_n$-form} on $f$ if the following conditions hold:
\begin{enumerate}
\item
for any $x\in G$, $f_1(x)=f(x)$,
\item
for any $2\le i\le n$, $1\le k\le i-1$, $t_1,\ldots,t_{i-1}\in[0,\infty]$ and $x_1,\ldots,x_i\in G$,
\begin{align*}
&f_i(t_1,\ldots,t_{i-1};x_1,\ldots,x_i)\\
&=
\begin{cases}
f_{i-1}(t_1,\ldots,t_{k-1},t_{k+1},\ldots,t_{i-1};x_1,\ldots,x_{k-1},x_kx_{k+1},x_{k+2},\ldots,x_i)
	& \text{if $t_k=0$},\\
f_k(t_1,\ldots,t_{k-1};x_1,\ldots,x_k)f_{i-k}(t_{k+1},\ldots,t_{i-1};x_{k+1},\ldots,x_i)
	& \text{if $t_k\ge L$},
\end{cases}
\end{align*}
\item
for any $2\le i\le n$, $1\le k\le i$, $t_1,\ldots,t_{i-1}\in[0,\infty]$ and $x_1,\ldots,x_{i-1}\in G$,
\begin{align*}
&f_i(t_1,\ldots,t_{i-1};x_1,\ldots,x_{k-1},\ast,x_k,\ldots,x_{i-1})\\
&=
\begin{cases}
f_{i-1}(t_2,\ldots,t_{i-1};x_1,\ldots,x_{i-1})
	& \text{if $k=1$},\\
f_{i-1}(t_1,\ldots,t_{k-2},\max\{t_{k-1},t_k\},t_{k+1},\ldots,t_{i-1};x_1,\ldots,x_{k-1},x_k,\ldots,x_{i-1})
	& \text{if $1<k<i$},\\
f_{i-1}(t_1,\ldots,t_{i-2};x_1,\ldots,x_{i-1})
	& \text{if $k=i$}.
\end{cases}
\end{align*}
\end{enumerate}
A pair $(f,(\{f_i\}_i,L))$ is called an \textit{$A_n$-map}.
The space of $A_n$-maps between topological monoids $G$ and $G'$ is denoted by $\mathcal{A}_n(G,G')$.
\end{dfn}
The composition is given as follows.

\begin{dfn}
The composition $(h,\{h_i\}_i,L+L')=g\circ f$ of $f=(f,\{f_i\}_i,L)\in\mathcal{A}_n(G,G')$ and $g=(g,\{g_i\}_i,L')\in\mathcal{A}_n(G',G'')$ defined as follows:
for any $r,i_1,\ldots,i_r\ge1$ with $i_1+\cdots+i_r\le n$, $t_1,\ldots,t_{r-1}\in[0,\infty]$, $\mathbf{s}_k\in[0,\infty]^{i_k-1}$, $\mathbf{x}_k\in G^{i_k}$, we have
\begin{align*}
h_{i_1+\cdots+i_r}(\mathbf{s}_1,t_1+L,\mathbf{s}_2,t_2+L,\ldots,t_{r-1}+L,\mathbf{s}_r;
	\mathbf{x}_1,\ldots,\mathbf{x}_k)
=
g_r(t_1,\ldots,t_{r-1};f_{i_1}(\mathbf{s}_1;\mathbf{x}_1),\ldots,f_{i_r}(\mathbf{s}_r;\mathbf{x}_r)).
\end{align*}
\end{dfn}

A homomorphism $f\colon G\to H$ can be seen as the $A_n$-map equipped with the \textit{standard $A_n$-form} $(\{f_i\}_i,0)$ given as
\[
f_i(t_1,\ldots,t_{i-1};g_1,\ldots,g_i)=f(g_1\cdots g_i)=f(g_1)\cdots f(g_i).
\]
In particular, the identity map $\operatorname{id}_G\in\mathcal{A}_n(G,G)$ acts as an identity for the above composition.

The following is proved in \cite[Section 4]{MR3491849}.

\begin{thm}
Topological monoids and the spaces of $A_n$-maps between them form a topologically enriched category $\mathcal{A}_n$.
Moreover, the category of topological monoids and homomorphisms can be embedded in $\mathcal{A}_n$ with the standard $A_n$-forms.
\end{thm}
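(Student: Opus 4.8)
The plan is to verify four things in order: (a) the composition formula produces a well-defined $A_n$-form on $g\circ f$; (b) the resulting structure maps are continuous, so composition is continuous on the morphism spaces; (c) composition is associative and the standard $A_n$-form on $\operatorname{id}_G$ is a two-sided unit; and (d) the rule attaching to a homomorphism its standard $A_n$-form is a faithful functor.

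For (a), fix $f=(f,\{f_i\},L)$ and $g=(g,\{g_i\},L')$; the composite must carry form parameter $L+L'$, so I would produce maps $h_m\colon[0,\infty]^{m-1}\times G^m\to G''$ for $1\le m\le n$. I would sort a point of $[0,\infty]^{m-1}$ by which of its coordinates are $\ge L$: calling these positions the \emph{$g$-separators}, the runs of remaining coordinates between them determine a decomposition $m=i_1+\cdots+i_r$, tuples $\mathbf{s}_k$ of the (necessarily $<L$) interior coordinates of the $k$-th run, and delays $\mathbf{t}=(t_1,\dots,t_{r-1})$ obtained by subtracting $L$ from the separator coordinates; the displayed formula then assigns the value $g_r(\mathbf{t};f_{i_1}(\mathbf{s}_1;\mathbf{x}_1),\dots,f_{i_r}(\mathbf{s}_r;\mathbf{x}_r))$. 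The delicate point---and the step I expect to be the main obstacle---is that a coordinate equal to the threshold $L$ may be read either as a $g$-separator with vanishing delay or as an interior coordinate of a block at its maximal value, and both readings must give the same element of $G''$. They do: by condition (2) for $g$ the corresponding slot of $g_r$ degenerates, multiplying the two neighbouring entries, while by condition (2) for $f$ the two $f$-blocks adjacent to that coordinate fuse into a single $f_{i_j+i_{j+1}}$ whose new coordinate equals $L$; the two procedures compute the same product. This gluing lemma is what makes each $h_m$ well defined.

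Granting (a), I would check that $(\{h_i\},L+L')$ satisfies the three conditions defining an $A_n$-form, coordinate by coordinate: condition (1) is immediate; for condition (2), an interior coordinate equal to $0$ inherits its collapse from condition (2) for $f$, while a coordinate $\ge L+L'$ is a $g$-separator of delay $\ge L'$ and its product splitting is inherited from condition (2) for $g$, the two factors again being instances of the composition formula; for condition (3), a basepoint inserted among the entries sits inside one block---where the $\max$-rule is condition (3) for $f$---or on the seam between a block and an adjacent separator, where the block-side coordinate is $\le L$ and is therefore dominated by the separator value, exactly as condition (3) demands. Continuity of each $h_m$ then follows because it is glued from the continuous maps $f_i$, $g_i$, the monoid multiplications and the translations $t\mapsto t+L$ along the closed threshold loci, the gluing lemma of (a) providing the agreement; continuity of composition on the morphism spaces follows using the adjunction between products and mapping spaces available in $\mathbf{CG}$.

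For (c), with $f,g,h$ of parameters $L,L',L''$, both $(h\circ g)\circ f$ and $h\circ(g\circ f)$ carry parameter $L+L'+L''$, and subdividing the cube according to whether a coordinate lies in $[0,L]$, $[L,L+L']$ or $[L+L',L+L'+L'']$ shows that either iterated formula collapses to the single triple expression $h_\ast(\,\cdot\,;g_\ast(\,\cdot\,;f_\ast(\,\cdot\,;\,\cdot\,)))$ (with $\ast$ the relevant arities), so the two agree. For the unit, the standard form on $\operatorname{id}_G$ has parameter $0$: in $f\circ\operatorname{id}_G$ the threshold is $0$, forcing every block to have length one and the formula to return $f_m$ directly, whereas in $\operatorname{id}_{G'}\circ f$ the formula returns $f_{i_1}(\mathbf{s}_1;\mathbf{x}_1)\cdots f_{i_r}(\mathbf{s}_r;\mathbf{x}_r)$, which equals $f_m$ by condition (2) for $f$. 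Finally, for (d), feeding the standard forms of homomorphisms $\phi\colon G\to G'$ and $\psi\colon G'\to G''$ (each of parameter $0$) into the composition formula returns exactly the standard form of $\psi\phi$, so the assignment preserves composition, and it is faithful because $f_1=f$ recovers the underlying map from any $A_n$-form. Together these establish the theorem.
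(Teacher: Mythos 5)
Your proposal is correct and takes the same route as the source to which the paper defers for this theorem (the proof is omitted in the paper and attributed to \cite{MR3491849}): the crux is exactly the well-definedness of the composite at coordinates equal to the threshold $L$, which you rightly isolate and resolve by playing condition (2) for $g$ at delay $0$ against condition (2) for $f$ at parameter $L$. Two harmless caveats: consistency forces your reading of the interior coordinates as lying in $[0,L]^{i_k-1}$ (as in the paper's definition of $B_n$) even though the displayed composition formula writes $[0,\infty]^{i_k-1}$, and in your verification of condition (3) the case of a basepoint occupying an entire length-one block has both neighbours equal to separators and so needs condition (3) for $g$ rather than for $f$.
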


The categories of left and right actions of topological monoids on topological spaces and $A_n$-equivariant maps are similarly defined.

\begin{dfn}
Let $G$ and $G'$ be topological monoids acting from the left on $X$ and $X'$, respectively, and $f=(f,(\{f_i\}_i,L))\colon G\to G'$ be an $A_n$-map between topological monoids.
A family of maps $\{\phi_i\colon[0,\infty]^i\times G^i\times X\to X'\}_{i=0}^n$ is an \textit{$A_n$-form} on a map $\phi\colon X\to X'$ if the following conditions hold: 
\begin{enumerate}
\item
for any $x\in X$, $\phi_0(x)=\phi(x)$,
\item
for any $1\le i\le n$, $1\le k\le i$, $t_1,\ldots,t_i\in[0,\infty]$, $g_1,\ldots,g_i\in G$ and $x\in X$,
\begin{align*}
&\phi_i(t_1,\ldots,t_i;g_1,\ldots,g_i;x)\\
&=
\begin{cases}
\phi_{i-1}(t_1,\ldots,t_{k-1},t_{k+1},\ldots,t_i;g_1,\ldots,g_{k-1},g_kg_{k+1},g_{k+2},\ldots,g_i;x)
	& \text{if $t_k=0$ and $k<i$},\\
\phi_{i-1}(t_1,\ldots,t_{i-1};g_1,\ldots,g_{i-1},g_ix)
	& \text{if $t_i=0$},\\
f_k(t_1,\ldots,t_{k-1};g_1,\ldots,g_k)\phi_{i-k}(t_{k+1},\ldots,t_i;g_{k+1},\ldots,g_i;x)
	& \text{if $t_k\ge L$},
\end{cases}
\end{align*}
\item
for any $1\le i\le n$, $1\le k\le i$, $t_1,\ldots,t_i\in[0,\infty]$, $g_1,\ldots,g_{i-1}\in G$ and $x\in X$,
\begin{align*}
&\phi_i(t_1,\ldots,t_i;g_1,\ldots,g_{k-1},\ast,g_k,\ldots,g_{i-1};x)\\
&=
\begin{cases}
\phi_{i-1}(t_2,\ldots,t_i;g_1,\ldots,g_{i-1};x)
	& \text{if $k=1$},\\
\phi_{i-1}(t_1,\ldots,t_{k-2},\max\{t_{k-1},t_k\},t_{k+1},\ldots,t_i;
	g_1,\ldots,g_{k-1},g_k,\ldots,g_{i-1};x)
	& \text{if $1<k\le i$}.
\end{cases}
\end{align*}
\end{enumerate}
A triple $(f,(\{f_i\}_i,L),\{\phi_i\}_i)$ is called an \textit{$A_n$-equivariant map}.
The space of $A_n$-equivariant maps between a left $G$-space $X$ and a left $G'$-space $X'$ is denoted by $\mathcal{A}^{\mathrm{L}}_n((G,X),(G',X'))$.
Moreover, the topologically enriched category $\mathcal{A}^{\mathrm{L}}_n$ of spaces with left actions of topological monoids and $A_n$-equivariant maps is similarly defined.
Equivariant $A_n$-maps between spaces with \textit{right} actions of topological monoids are similarly defined.
The category and a mapping space in it are denoted by $\mathcal{A}^{\mathrm{R}}_n$ and $\mathcal{A}^{\mathrm{R}}_n((X,G),(X',G'))$, respectively.
\end{dfn}

Our bar construction functor is defined on the category of $A_n$-equivariant maps.
We take a model of the $i$-dimensional simplex $\Delta^i$ as
\[
\Delta^i=
\{(t_0,\ldots,t_i)\in[0,\infty]^i\mid\text{$t_k=\infty$ for some $k$}\}.
\]
The face $\partial_k\colon\Delta^{i-1}\to\Delta^i$ and degeneracy $\epsilon_k\colon\Delta^{i+1}\to\Delta^i$ ($k=0,\ldots,i$) are given by
\begin{align*}
\partial_k(t_0,\ldots,t_{i-1})&=(t_0,\ldots,t_{k-1},0,t_k,\ldots,t_{i-1}),\\
\epsilon_k(t_0,\ldots,t_{i+1})
&=(t_0,\ldots,t_{k-1},\max\{t_k,t_{k+1}\},t_{k+2},\ldots,t_{i+1}).
\end{align*}

Consider the fiber product category $\mathcal{A}_n^{\mathrm{R}}\times_{\mathcal{A}_n}\mathcal{A}_n^{\mathrm{L}}$ in the obvious sense, where an object is a triple $(X,G,Y)$ of a topological monoid $G$, a right $G$-space $X$ and a left $G$-space $Y$ and a morphism is an $A_n$-equivariant map between them.

\begin{dfn}
For a triple $(X,G,Y)\in\mathcal{A}_n^{\mathrm{R}}\times_{\mathcal{A}_n}\mathcal{A}_n^{\mathrm{L}}$, the space $B_n(X,G,Y)$ is defined to be the quotient space
\[
B_n(X,G,Y)
=
\left(\coprod_{0\le i\le n}\Delta^i\times X\times G^i\times Y\middle)\right/{\sim}
\]
by the usual simplicial relation.
For a morphism
\[
(\phi,f,\psi)=(\phi,\{\phi_i\}_i,f,\{f_i\}_i,\psi,\{\psi_i\}_i,L)\colon
(X,G,Y)\to(X',G',Y')
\]
in $\mathcal{A}_n^{\mathrm{R}}\times_{\mathcal{A}_n}\mathcal{A}_n^{\mathrm{L}}$, the induced map
\[
B_n(\phi,f,\psi)\colon B_n(X,G,Y)\to B_n(X',G',Y')
\]
is defined by
\begin{align*}
&B_n(\phi,f,\psi)[\mathbf{s}_1,t_1+L,\mathbf{s}_2,t_2+L,\ldots,t_{r-1}+L,\mathbf{s}_r;
	x,\mathbf{g}_1,\ldots,\mathbf{g}_r,y]\\
&=[t_1,\ldots,t_{r-1};\phi_{i_1}(\mathbf{s}_1;x,\mathbf{g}_1),f_{i_2}(\mathbf{s}_2;\mathbf{g}_2),
	\ldots,f_{i_{r-1}}(\mathbf{s}_{r-1};\mathbf{g}_{r-1}),\psi_{i_r}(\mathbf{s}_r;\mathbf{g}_r,y)].
\end{align*}
for $i=i_1+\cdots+i_r\le n$, $t_1,\ldots,t_{r-1}\in[0,\infty]$, $\mathbf{s}_k\in[0,L]^{i_k-1}$, $x\in X$, $\mathbf{g}_k\in G^{i_k}$, $y\in Y$.
This construction defines a continuous functor
\[
B_n\colon
\mathcal{A}_n^{\mathrm{R}}\times_{\mathcal{A}_n}\mathcal{A}_n^{\mathrm{L}}
\to
\mathbf{CG}.
\]
In particular, the correspondence $G\mapsto B_nG=B_n(\ast,G,\ast)$ defines the \textit{$n$-th projective space} functor
\[
B_n\colon\mathcal{A}_n\to\mathbf{CG}_\ast.
\]
For $(X,G,Y)\in\mathcal{A}_\infty^{\mathrm{R}}\times_{\mathcal{A}_\infty}\mathcal{A}_\infty^{\mathrm{L}}$, let
\[
\iota_n\colon B_n(X,G,Y)\to B(X,G,Y)=B_\infty(X,G,Y)
\]
denote the natural inclusion.
\end{dfn}

Note that our bar construction functor coincides with the bar construction for usual equivariant maps through the obvious embedding into the category $\mathcal{A}_n^{\mathrm{R}}\times_{\mathcal{A}_n}\mathcal{A}_n^{\mathrm{L}}$.

The following is a technical lemma found in \cite[Theorem 7.6]{MR370579}.

\begin{lem}
\label{lem_bar_quasifib}
Let $G$ be a grouplike topological monoid with basepoint having the homotopy extension property and $(X,G,Y)\in\mathcal{A}_n^{\mathrm{R}}\times_{\mathcal{A}_n}\mathcal{A}_n^{\mathrm{L}}$.
Then the maps
\[
B_n(X,G,Y)\to B_n(X,G,\ast)
\quad
\text{and}
\quad
B_n(X,G,Y)\to B_n(\ast,G,Y)
\]
are quasifibrations.
\end{lem}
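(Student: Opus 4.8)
The plan is to deduce this from the Dold--Thom quasifibration criterion, running the argument of \cite[Theorem 7.6]{MR370579} essentially verbatim. An object $(X,G,Y)$ of $\mathcal{A}_n^{\mathrm{R}}\times_{\mathcal{A}_n}\mathcal{A}_n^{\mathrm{L}}$ is just a topological monoid $G$ together with a right $G$-space $X$ and a left $G$-space $Y$, and $B_n(X,G,Y)$ is the two-sided bar construction truncated at simplicial degree $n$; thus there is nothing in the $A_n$-formalism to contend with beyond the functoriality of $B_n$ already recorded, and the truncation only makes the relevant skeletal filtration finite. By the left--right symmetry $B_n(X,G,Y)\cong B_n(Y,G^{\op},X)$, together with the fact that $G$ is grouplike if and only if $G^{\op}$ is, it suffices to treat the projection $\pi\colon B_n(X,G,Y)\to B_n(\ast,G,Y)$ forgetting the $X$-coordinate, whose fibre is a copy of $X$. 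First I would filter $B_n(\ast,G,Y)$ by the skeleta $F_p=\operatorname{im}\bigl(\coprod_{0\le i\le p}\Delta^i\times G^i\times Y\bigr)$ and $B_n(X,G,Y)$ by $\operatorname{im}\bigl(\coprod_{0\le i\le p}\Delta^i\times X\times G^i\times Y\bigr)$ for $0\le p\le n$, so that $\pi$ carries the $p$-th skeleton onto the $p$-th skeleton and $\pi^{-1}(F_p)$ is the $p$-th skeleton upstairs.

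Two facts then feed an induction on $p$. Since $G$ is well pointed, the open stratum $F_p\setminus F_{p-1}$ is, both upstairs and downstairs, a product of the interior of $\Delta^p$, a power of $G$ with its basepoint removed, and the remaining coordinates, and over this stratum $\pi$ is visibly the trivial bundle with fibre $X$. On the other hand, collaring the boundary of $\Delta^p$ (and using the standard retraction near the basepoint of $G$) yields an open neighbourhood $U_p$ of $F_{p-1}$ in $F_p$ with a deformation $h_s$ ($0\le s\le 1$), $h_0=\id$ and $h_1(U_p)\subset F_{p-1}$, obtained by pushing the small barycentric coordinates $t_k$ towards $0$; the same recipe applied coordinatewise lifts $h_s$ to a deformation $\widetilde{h}_s$ of $\pi^{-1}(U_p)$ with $\widetilde{h}_1\bigl(\pi^{-1}(U_p)\bigr)\subset\pi^{-1}(F_{p-1})$. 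The essential point is that $\widetilde{h}_1$ restricts to a weak equivalence on each fibre of $\pi$: collapsing a coordinate $t_k$ with $k\ge 1$ realizes a face operator that multiplies two adjacent factors of $G$ or pushes a factor of $G$ into $Y$ and leaves the $X$-coordinate untouched, hence is the identity on the fibre $X$, whereas collapsing a block of coordinates including $t_0$ realizes iterated $d_0$ and hence right translation $x\mapsto xg_1\cdots g_j$ on the fibre. This is where the hypothesis that $G$ is grouplike is indispensable: it makes the shear $X\times G\to X\times G$, $(x,g)\mapsto(xg,g)$, a homotopy equivalence, so right translations by elements of $G$ are weak equivalences, and hence so is $\widetilde{h}_1$ on fibres. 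Granting this, the standard criterion — a map admitting a deformation retraction onto a submap that is a quasifibration, the retraction being a weak equivalence on fibres, is itself a quasifibration — shows $\pi$ is a quasifibration over $U_p$ as soon as it is one over $F_{p-1}$.

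Finally the induction closes by gluing. For $p=0$, $\pi$ over $F_0$ is the trivial bundle $X\times Y\to Y$. For the inductive step, write $F_p=U_p\cup(F_p\setminus F_{p-1})$: $\pi$ is a quasifibration over $U_p$ by the above, a trivial bundle over the open stratum $F_p\setminus F_{p-1}$, and a trivial bundle over the intersection $U_p\setminus F_{p-1}$ (which is open in that stratum), so the Dold--Thom gluing lemma shows $\pi$ is a quasifibration over $F_p$; at $p=n$ this is the assertion. The one place where real content enters rather than bookkeeping is the fibrewise-equivalence check for the $\partial_0$-collar described above, which is exactly the role of the grouplike hypothesis; the rest — the finite filtration, the collaring of simplices, the retraction near the basepoint of $G$ — goes through as in \cite{MR370579}.
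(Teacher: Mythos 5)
The paper does not actually prove this lemma---it is quoted directly from May's Theorem 7.6 in the cited memoir---and your argument is precisely the standard Dold--Thom skeletal induction (trivial bundle over the open top stratum, collar deformation onto the lower skeleton that acts on fibres by translations, grouplikeness making those translations weak equivalences, then the gluing lemma) that establishes it there, so it is correct and matches the intended proof. The one hypothesis you use that is not explicit in the statement is that $(G,\ast)$ is an NDR pair, needed to retract a neighbourhood of the degenerate simplices into the lower skeleton; this ``properness'' assumption is likewise implicit in the cited reference and in the paper's standing conventions on basepoints.
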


The following is the main theorem in \cite{MR3491849}.

\begin{thm}
\label{thm_Tsu16}
Let $G$ be a topological monoid, of which the underlying space is a CW complex, and $G'$ be a grouplike topological monoid, of which the basepoint has the homotopy extension property.
Then the following composite is a weak homotopy equivalence:
\[
\mathcal{A}_n(G,G')
\xrightarrow{B_n}
\Map_\ast(B_nG,B_nG')
\xrightarrow{(\iota_n)_{\sharp}}
\Map_\ast(B_nG,BG').
\]
\end{thm}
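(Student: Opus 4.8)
The plan is an induction on $n$ comparing the natural ``truncation'' towers on the two sides. Write $\Phi_n\colon\mathcal{A}_n(G,G')\to\Map_\ast(B_nG,BG')$ for the composite in the statement. Forgetting the last component of an $A_n$-form gives $q_n\colon\mathcal{A}_n(G,G')\to\mathcal{A}_{n-1}(G,G')$, and restriction along $B_{n-1}G\hookrightarrow B_nG$ gives $r_n\colon\Map_\ast(B_nG,BG')\to\Map_\ast(B_{n-1}G,BG')$; functoriality of $B_n$ together with compatibility of the maps $\iota_\bullet$ makes $\Phi$ a map of towers, i.e. $r_n\circ\Phi_n=\Phi_{n-1}\circ q_n$. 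In the base case $n=1$ one has $\mathcal{A}_1(G,G')\simeq\Map_\ast(G,G')$ (an $A_1$-map is just a pointed map), $B_1G\simeq\Sigma G$, and, via the suspension--loop adjunction, $\Phi_1$ is identified with post-composition with the canonical map $c\colon G'\to\Omega BG'$; since $G'$ is grouplike, $c$ is a weak equivalence --- this is the quasifibration $B(\ast,G',G')\to BG'$ of Lemma~\ref{lem_bar_quasifib} (with $n=\infty$), whose total space is contractible and whose fibre is $G'$ --- and post-composition with it is a weak equivalence on $\Map_\ast(G,-)$ because $G$ is a CW complex, so $\Phi_1$ is a weak equivalence.

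For the inductive step it suffices, by comparison of the long exact sequences of the homotopy-fibre sequences of $q_n$ and $r_n$ and the inductive hypothesis on $\Phi_{n-1}$, to check that $\Phi_n$ induces a weak equivalence on homotopy fibres over the trivial basepoint. First I identify the two fibres. Because $G$ is a CW complex, $B_{n-1}G\hookrightarrow B_nG$ is a cofibration --- $B_nG$ is obtained from $B_{n-1}G$ by attaching cells modelled on $\Delta^n\times G^{\wedge n}$ --- so $r_n$ is a Hurewicz fibration with fibre $\Map_\ast(B_nG/B_{n-1}G,BG')$, and a direct inspection of the face and degeneracy identifications gives $B_nG/B_{n-1}G\simeq\Sigma^n(G^{\wedge n})$; hence this fibre is $\Map_\ast(\Sigma^n G^{\wedge n},BG')\cong\Map_\ast(\Sigma^{n-1}G^{\wedge n},\Omega BG')\simeq\Map_\ast(\Sigma^{n-1}G^{\wedge n},G')$, using $c$ once more. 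On the other side, extending a trivial $A_{n-1}$-form to an $A_n$-form amounts to choosing a last component $f_n\colon[0,\infty]^{n-1}\times G^n\to G'$, and conditions (2) and (3) of the definition of an $A_n$-form force $f_n$ to vanish on the faces of the cube $[0,\infty]^{n-1}$ (and a collar of the faces at $\infty$) and on $[0,\infty]^{n-1}\times T_n$, where $T_n\subset G^n$ is the fat wedge; thus $f_n$ factors through $\Sigma^{n-1}(G^{\wedge n})$ and the fibre of $q_n$ is $\simeq\Map_\ast(\Sigma^{n-1}G^{\wedge n},G')$.

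It remains to see that $\Phi_n$ carries the left fibre to the right one by a map homotopic to the above chain of equivalences, and this is the step I expect to be the main obstacle. Unwinding the definition of the functor $B_n$ on the top simplex, for an $A_n$-form $(\{f_i\},L)$ the map $B_n(f)$ records $f_n$ on the last face of $\Delta^n\times G^{\wedge n}$, so after collapsing to $B_nG/B_{n-1}G\simeq\Sigma^n G^{\wedge n}$ its restriction to the new cells is, up to the reparametrisation $[0,\infty]^{n-1}\cong I^{n-1}$ and the suspension--loop adjunction, the composite of $f_n$ with $c$; tracing this through shows the induced map of fibres is the displayed composition of weak equivalences. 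The hypotheses are used precisely where expected: $G$ a CW complex makes $B_{n-1}G\hookrightarrow B_nG$ and the boundary inclusion into $[0,\infty]^{n-1}\times G^n$ cofibrations and the relevant mapping and extension spaces well behaved; $G'$ grouplike is what makes $c$ a weak equivalence (invoked twice through Lemma~\ref{lem_bar_quasifib}); well-pointedness of $G'$ keeps $BG'$ and the quasifibrations honest. The genuinely delicate point is that last identification: verifying that the fat-wedge and associahedral boundary bookkeeping built into the notion of an $A_n$-form matches, cell for cell, the simplicial skeleton of the bar construction, so that the comparison map of layers really is the expected adjunction equivalence --- everything else being routine manipulation of fibrations and adjunctions. (A minor additional check, that $\Phi_n$ is a bijection on $\pi_0$, follows from the same five-lemma argument applied to $\pi_1\to\pi_0(\mathrm{fibre})\to\pi_0(\mathrm{total})\to\pi_0(\mathrm{base})$.)
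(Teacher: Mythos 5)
Note first that the paper does not prove this theorem; it is imported from \cite{MR3491849}, where the argument is essentially the tower-comparison induction you outline, so your overall route is the intended one. There is, however, a concrete gap in your identification of the homotopy fibre of $q_n\colon\mathcal{A}_n(G,G')\to\mathcal{A}_{n-1}(G,G')$. In this paper an $A_n$-form carries the extra parameter $L\in[0,\infty]$, and condition (2) constrains $f_n$ not just on the faces of the cube but on the whole region $\{t_k\ge L\}$. Consequently the strict fibre of $q_n$ varies drastically over a single path component: over an $A_{n-1}$-form with $L=\infty$ it is the space $\Map_\ast(\Sigma^{n-1}G^{\wedge n},G')$ you want, but over a form with $L=0$ --- which includes the standard form of \emph{every} homomorphism, in particular the trivial basepoint you propose to work over --- the relations $f_n(t;\mathbf{x})=f_k(\ldots)f_{n-k}(\ldots)$ hold for all $t$ and determine $f_n$ completely, so the strict fibre is a single point. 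Hence $q_n$ is not a fibration, and ``the fibre of $q_n$ is $\Map_\ast(\Sigma^{n-1}G^{\wedge n},G')$'' is false as stated. Before the five-lemma comparison can be run you must either show that the subspace of forms with $L=\infty$ (where your computation is valid and a homotopy-lifting argument for $q_n$ is available) is a deformation retract of $\mathcal{A}_n(G,G')$ compatibly with $q_n$, or otherwise compute the homotopy fibre directly; this is precisely the kind of bookkeeping the $[0,\infty]$-formalism was introduced to manage, and it cannot be skipped.

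The second substantive step --- that the induced map of fibres agrees with the adjunction equivalence $\Map_\ast(\Sigma^{n-1}G^{\wedge n},G')\to\Map_\ast(\Sigma^{n}G^{\wedge n},BG')$ --- you flag but do not carry out, and it is where most of the work in \cite{MR3491849} lives. The $L$-issue infects this step as well: by the definition of $B_n(\phi,f,\psi)$, the top simplex is mapped using the whole family $\{f_i\}$ via the decomposition of $\Delta^n$ into blocks separated by coordinates $\ge L$, not by ``recording $f_n$ on the last face,'' so the cell-for-cell matching must be done with that shift in place. The remaining ingredients --- the cofibration $B_{n-1}G\hookrightarrow B_nG$, the identification $B_nG/B_{n-1}G\simeq\Sigma^nG^{\wedge n}$ for well-pointed $G$, the use of $G'\simeq\Omega^{\mathrm{M}}BG'$ for grouplike $G'$ via Lemma \ref{lem_bar_quasifib}, and the base case $n=1$ --- are correct as you state them.
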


This homotopy equivalence establishes the one-to-one correspondence between the homotopy classes of $A_n$-maps $G\to G'$ and the basepoint-preserving maps $B_nG\to BG'$.
The homotopy classes of an $A_n$-map and the corresponding basepoint-preserving map are said to be \textit{adjoint} to each other.
The reason for this is the homotopy equivalence induces the adjunction
\[
\pi_0(\mathcal{A}_n(G,\Omega^{\mathrm{M}} X))\cong\pi_0(\Map_\ast(B_nG,X))
\]
between the projective space functor $B_n$ and the Moore based loop space functor $\Omega^{\mathrm{M}}$ in the homotopy categories.

\section{Planar rooted trees and associahedra}
\label{section_associahedra}

Let us recall the construction of associahedra in \cite{MR0420609}.
But our construction is slightly different from it since we need to make the composition of $A_n$-maps associative.

We set some notions related to planar rooted trees.
\begin{itemize}
\item
A \textit{rooted tree} in our sense is a contractible finite graph with distinguished vertex called the \textit{root} such that the root is of degree $1$ and no vertex is of degree $2$.
\item
A \textit{planar rooted tree} is an equivalence class of embeddings of a rooted tree into the upper half plane $\{(x,y)\in\mathbb{R}^2\mid y\ge0\}$ such that the root is mapped to the origin.
Two such embeddings of rooted tree are said to be \textit{equivalent} if they are isotopic through an isomorphism between rooted trees.
\item
In a planar rooted tree, a vertex of degree $1$ different from the root is called a \textit{leaf}.
We will write $\mathcal{T}_n$ for the set of planar rooted trees with $n$ leaves.
For $\tau\in\mathcal{T}_n$, we assign numbers $1,\ldots,n$ to the leaves of $\tau$ from left to right and call the leaf corresponding to $k$ the \textit{$k$-th leaf}.
\item
We will call the edge connected to the root the \textit{root edge}, an edge connected to a leaf a \textit{leaf edge} and other edges \textit{internal edges}.
We will write $I(\tau)$ for the set of internal edges of a planar rooted tree $\tau$.
\item
Along the shortest paths from leaves to the root, an orientation is assigned to each edge in a planar rooted tree.
\end{itemize}

Consider the space
\[
\mathcal{LT}_n=\coprod_{\tau\in\mathcal{T}_n}\{\tau\}\times[0,\infty]^{I(\tau)},
\]
where $[0,\infty]^{I(\tau)}$ is the set of maps $I(\tau)\to[0,\infty]$.
An \textit{elementary collapse} of a planar rooted tree $\tau$ at $e\in I(\tau)$ is a planar rooted tree obtained by just collapsing $e$ to a point.
Define the equivalence relation on $\mathcal{LT}_n$ generated by $(\tau,\ell)\sim(\tau',\ell')$ such that $\tau'$ is an elementary collapse of $\tau$ at $e\in I(\tau)$, $\ell(e)=0$, and, considering $I(\tau')\subset I(\tau)$, the function $\ell$ restricts to $\ell'$ on $I(\tau')$.
Denote by $\mathcal{K}_n$ the quotient space of $\mathcal{LT}_n$, called the \textit{$n$-th associahedron}.
In the rest of the paper, an element of $\mathcal{LT}_n$ or $\mathcal{K}_n$ will be denoted simply by $\tau$.
The function $I(\tau)\to[0,\infty]$ will be denoted by $\ell$ for any $\tau$.
Such a convention does not cause a confusion.
\begin{figure}[h!]
	\includegraphics[height=24mm]{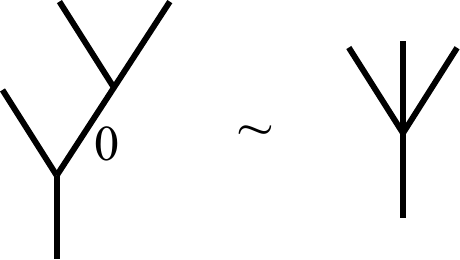}
	\caption{The equivalence by an elementary collapse.}
\end{figure}

Let $\rho\in\mathcal{T}_r$, $\sigma\in\mathcal{T}_s$ and $1\le k\le r$ ($k\in\mathbb{Z}$).
Then the \textit{grafting} $\partial_k(\rho,\sigma)\in\mathcal{T}_{r+s-1}$ of $\sigma$ to $\rho$ at the $k$-th leaf is obtained by identifying the root edge of $\sigma$ and the $k$-th leaf edge respecting the orientation.
Here we note:
\[
I(\partial_k(\rho,\sigma))=I(\rho)\sqcup I(\sigma)\sqcup\{\text{the new internal edge}\},
\]
where the new internal edge is the identified edge in the construction.
We can extend the grafting construction to
\[
\partial_k^L\colon\mathcal{LT}_r\times\mathcal{LT}_s\to\mathcal{LT}_{r+s-1}
\quad
\text{and}
\quad
\partial_k^L\colon\mathcal{K}_r\times\mathcal{K}_s\to\mathcal{K}_{r+s-1}
\]
for $L\in[0,\infty]$ by defining $\ell\colon I(\partial_k^L(\rho,\sigma))\to[0,\infty]$ as
\[
\ell(e)=
\begin{cases}
\ell(e) & \text{if $e\in I(\rho)\sqcup I(\sigma)$},\\
L & \text{if $e$ is the new internal edge}.
\end{cases}
\]
When $L=\infty$, we will simply write $\partial_k=\partial_k^\infty$.

Let $\tau\in\mathcal{T}_n$ ($n\ge3$) and $1\le k\le n$.
Let us construct the \textit{degeneracy} $s_k(\tau)\in\mathcal{T}_{n-1}$ as follows.
Let $s_k(\tau)$ be a planar rooted tree obtained from just removing the $k$-th leaf edge of $\tau$ if the resulting tree does not have a vertex of degree $2$ (i.e. the $k$-th leaf edge is not connected to a vertex of degree $3$).
When it has a vertex $v$ of degree $2$, define a planar rooted tree $s_k(\tau)$ identifying its edges connected to $v$ respecting the orientation.
To extend the degeneracy to the maps
\[
s_k\colon\mathcal{LT}_n\to\mathcal{LT}_{n-1}
\quad
\text{and}
\quad
s_k\colon\mathcal{K}_n\to\mathcal{K}_{n-1},
\]
we define $\ell(e_0)=\max\{\ell(e_0'),\ell(e_0'')\}$ for the edge $e_0\in I(s_k(\tau))$ obtained by identifying $e_0',e_0''\in I(\tau)$ if it exists and is internal.
The same value $\ell(e)$ as in $\tau$ is assigned for any other internal edge $e\in I(s_k(\tau))$.
\begin{figure}[h!]
	\includegraphics[height=24mm]{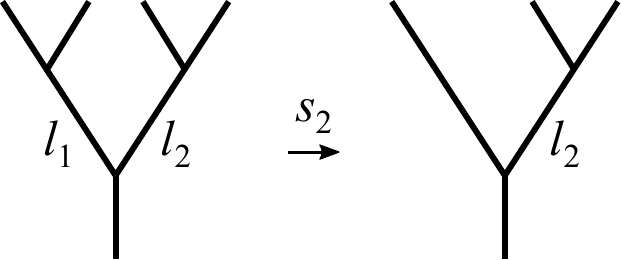}
	\caption{Degeneracy $s_2$ on $\mathcal{LT}_4$.}
\end{figure}

\section{$A_n$-spaces and $A_n$-maps}
\label{section_an-space}

In this section, we recall the definitions of $A_n$-spaces and of $A_n$-maps from $A_n$-spaces to topological monoids.

\begin{dfn}
Let $H$ be a pointed space.
A family of maps $\{m_i\colon\mathcal{K}_i\times H^i\to H\}_{i=2}^n$ is called an \textit{$A_n$-form} on $H$ if the following conditions hold:
\begin{enumerate}
\item
for any $r,s\ge2$ with $r+s-1\le n$, $1\le k\le r$, $\rho\in\mathcal{K}_r$, $\sigma\in\mathcal{K}_s$ and $x_1,\ldots,x_{r+s-1}\in H$,
\begin{align*}
m_{r+s-1}(\partial_k(\rho,\sigma);x_1,\ldots,x_{r+s-1})
=
m_r(\rho;x_1,\ldots,x_{k-1},m_s(\sigma;x_k,\ldots,x_{k+s-1}),x_{k+s},\ldots,x_{r+s-1}),
\end{align*}
\item
for any $x\in H$,
\[
m_2(\ast;x,\ast)=m_2(\ast;\ast,x)=x,
\]
\item
for any $3\le i\le n$, $1\le k\le i$, $\tau\in\mathcal{K}_i$ and $x_1,\ldots,x_{i-1}\in H$,
\begin{align*}
m_i(\tau;x_1,\ldots,x_{k-1},\ast,x_k,\ldots,x_{i-1})
=
m_{i-1}(s_k(\tau);x_1,\ldots,x_{k-1},x_k,\ldots,x_{i-1}).
\end{align*}
\end{enumerate}
A pair $(H,\{m_i\}_i)$ of a pointed space and an $A_n$-form on it is called an \textit{$A_n$-space}.
\end{dfn}

Similarly, we can define an $A_n$-form $\{\mu_i\colon\mathcal{K}_{i+1}\times H^{i}\times X\to X\}_{i=1}^n$ of $A_n$-actions on $X$ by $H$.
Taking the adjoint, this can be considered as an ``$A_n$-map'' from $H$ to $\Map(X,X)$.
This idea is extended to define an $A_n$-map from an $A_n$-map to a topological monoid as follows while our definition includes the parameter $L$ not appearing in the one by Stasheff \cite{MR0270372}.

\begin{dfn}
Let $(H,\{m_i\}_i)$ be an $A_n$-space, $G$ a topological monoid and $f\colon H\to G$ a pointed map.
A pair $(\{f_i\}_i,L)$ of a family of maps $\{f_i\colon\mathcal{K}_{i+1}\times H^i\to G\}_{i=1}^n$ and $L\in[0,\infty]$ is called an \textit{$A_n$-form} on $f$ if the following conditions hold:
\begin{enumerate}
\item
for any $r\ge1$, $s\ge2$ with $r+s-1\le n$, $1\le k\le r$, $\rho\in\mathcal{K}_{r+1}$, $\sigma\in\mathcal{K}_s$ and $x_1,\ldots,x_{r+s-1}\in H$,
\begin{align*}
f_{r+s-1}(\partial_k(\rho,\sigma);x_1,\ldots,x_{r+s-1})
=
f_r(\rho;x_1,\ldots,x_{k-1},m_s(\sigma;x_k,\ldots,x_{k+s-1}),x_{k+s},\ldots,x_{r+s-1}),
\end{align*}
\item
for any $r,s\ge1$ with $r+s\le n$, $\rho\in\mathcal{K}_{r+1}$, $\sigma\in\mathcal{K}_{s+1}$, $x_1,\ldots,x_{r+s}\in H$ and $\ell\ge L$,
\[
f_{r+s}(\partial_{r+1}^{\ell}(\rho,\sigma);x_1,\ldots,x_{r+s})
=
f_r(\rho;x_1,\ldots,x_r)f_s(\sigma;x_{r+1},\ldots,x_{r+s}),
\]
\item
for any $x\in H$,
\[
f_1(\ast;x)=f(x),
\]
\item
for any $2\le i\le n$, $1\le k\le i$, $\tau\in\mathcal{K}_{i+1}$ and $x_1,\ldots,x_{i-1}\in H$,
\begin{align*}
f_i(\tau;x_1,\ldots,x_{k-1},\ast,x_k,\ldots,x_{i-1})
=
f_{i-1}(s_k(\tau);x_1,\ldots,x_{k-1},x_k,\ldots,x_{i-1}).
\end{align*}
\end{enumerate}
A pair $(f,(\{f_i\}_i,L))$ of a pointed map and an $A_n$-form on it is called an \textit{$A_n$-map}.
The space of $A_n$-maps from an $A_n$-space $H$ to a topological monoid $G$ is denoted by
\[
\mathcal{A}'_n(H,G)
\subset
\left(\prod_{i=1}^n\Map(\mathcal{K}_{i+1}\times H^i,G)\right)\times[0,\infty].
\]
\end{dfn}

Note that we have two types of $A_n$-maps between topological monoids since any topological monoid $G$ is equipped with the \textit{standard $A_n$-form} $\{m_i\}_i$ given by
\[
m_i(\tau;x_1,\ldots,x_i)=x_1\cdots x_i.
\]
We will see in Proposition \ref{prp_composition_equivalence} that $\mathcal{A}'_n(G,G')$ and $\mathcal{A}_n(G,G')$ are naturally homotopy equivalent.
So the two types of $A_n$-maps are not essentially different in the homotopical sense.

Now we define the composition between $\mathcal{A}'_n(H,G)$ and $\mathcal{A}_n(G,G')$.
Let $\tau\in\mathcal{LT}_n$ and $L\in[0,\infty]$.
Consider the subset $I'(\tau)\subset I(\tau)$ of the internal edges that lie in the shortest path between the $n$-th leaf and the root and
\[
I''_L(\tau)=\{e\in I'(\tau)\mid\ell(e)\ge L\}.
\]
Cutting $\tau$ at the internal edges in $I''_L(\tau)$, we obtain the planar rooted trees $\tau_1,\ldots,\tau_r$ when $\sharp I''_L(\tau)=r-1$ such that $\tau_j$ is closer to the root than $\tau_{j'}$ for $j<j'$.
\begin{figure}[h!]
	\includegraphics[height=36mm]{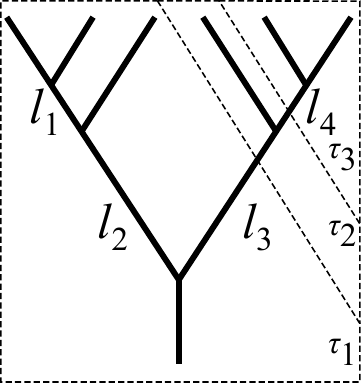}
	\caption{Cutting $\tau$ at the edges in $I''_L(\tau)$ when $l_3,l_4\ge L$.}
\end{figure}
\begin{dfn}
Let $(H,\{m_i\}_i)$ be an $A_n$-space and $G,G'$ be topological monoids.
The composition $(h,(\{h_i\}_i,L+L'))=g\circ f$ of $f=(f,(\{f_i\}_i,L))\in\mathcal{A}'_n(H,G)$ and $g=(g,(\{g_i\}_i,L'))\in\mathcal{A}_n(G,G')$ is defined as follows:
for any $1\le i\le n$, $\tau\in\mathcal{LT}_i$ with $\tau_j\in\mathcal{LT}_{i_k}$ ($k=1,\ldots,r$) obtained by cutting $\tau$ as above and $\mathbf{x}_k\in H^{i_k}$, we define
\begin{align*}
h_i(\tau;\mathbf{x}_1,\ldots,\mathbf{x}_r)
=
g_r(\ell(e_1)-L,\ldots,\ell(e_{r-1})-L;f_{i_1}(\tau_1;\mathbf{x}_1),\ldots,f_{i_r}(\tau_r;\mathbf{x}_r)),
\end{align*}
where $I''_L(\tau)=\{e_1,\ldots,e_r\}$ and $e_j$ is closer to the root than $e_{j'}$ for $j<j'$.
\end{dfn}

By a similar argument to that in \cite[Sections 3 and 4]{MR3491849}, the following theorem holds (see Figure \ref{figure_associativity}).
\begin{thm}
\label{thm_composite_A'}
Let $(H,\{m_i\}_i)$ be an $A_n$-space and $G,G',G''$ be topological monoids.
The composition
\[
\circ\colon\mathcal{A}_n(G,G')\times\mathcal{A}'_n(H,G)\to\mathcal{A}'_n(H,G')
\]
is continuous and the following associativity and unitality hold:
\begin{enumerate}
\item
for any $f\in\mathcal{A}'_n(H,G)$, $g\in\mathcal{A}_n(G,G')$ and $g'\in\mathcal{A}_n(G',G'')$, $g'\circ(g\circ f)=(g'\circ g)\circ f$,
\item
for any $f\in\mathcal{A}'_n(H,G)$, $\operatorname{id}_G\circ f=f$.
\end{enumerate}
\end{thm}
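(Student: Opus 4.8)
The plan is to verify the three claimed properties directly from the defining formula for the composition, exactly mirroring the argument in \cite{MR3491849} for the composition $\mathcal{A}_n\times\mathcal{A}_n\to\mathcal{A}_n$. Continuity is the easiest point: the formula for $h_i(\tau;\mathbf{x}_1,\ldots,\mathbf{x}_r)$ is assembled from the maps $f_{i_k}$, the map $g_r$, and the coordinate functions $\ell(e_j)$, all composed with the continuous cutting operation $\mathcal{LT}_i\to\coprod_r\mathcal{LT}_{i_1}\times\cdots\times\mathcal{LT}_{i_r}$ that sends $\tau$ to $(\tau_1,\ldots,\tau_r)$; so $h_i$ is continuous on each stratum and the pieces agree on overlaps by condition (2) of the $A_n$-form on $g$ (the clause $t_k\ge L'$) together with condition (2) of the $A_n$-form on $f$ (the clause for $\ell\ge L$). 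One must also check the triple $(h,(\{h_i\}_i,L+L'))$ genuinely satisfies the four conditions of Definition of $\mathcal{A}'_n$, i.e. that $\circ$ lands in $\mathcal{A}_n(H,G')$; this is the bookkeeping that the grafting relations (1), the splitting relation (2) with parameter $L+L'$, the normalization (3), and the degeneracy relation (4) are each inherited. Relations (3) and (4) are immediate from the corresponding relations for $f$ and $g$ and the compatibility of cutting with $s_k$; relation (1) follows because grafting $\partial_k(\rho,\sigma)$ with $\sigma\in\mathcal{K}_s$ (so the new edge has $\ell=\infty\ge L$ only if $k=r+1$, otherwise $\ell$ small) interacts with the cut either entirely inside one $\tau_j$ — where we use relation (1) for $f$ — or splits it, using relation (2) for $g$ at the appropriate spot; relation (2) for $h$ with parameter $L+L'$ corresponds to a cut point with $\ell\ge L+L'$, which forces a cut in $g$'s tree at a coordinate $\ge L'$, where relation (2) for $g$ applies.

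For unitality, take $g=\operatorname{id}_G$ with the standard $A_n$-form $(\{g_i\}_i,0)$, so $L'=0$ and $g_r(t_1,\ldots,t_{r-1};y_1,\ldots,y_r)=y_1\cdots y_r$. Then the relevant $I''_L$ is unchanged, but since $g_r$ is just the product, the formula collapses: $h_i(\tau;\mathbf{x}_1,\ldots,\mathbf{x}_r)=f_{i_1}(\tau_1;\mathbf{x}_1)\cdots f_{i_r}(\tau_r;\mathbf{x}_r)$, and by relation (2) for $f$ (with parameter $L$) applied repeatedly at the edges $e_1,\ldots,e_{r-1}$ — each of which has $\ell\ge L$ by construction of the cut — this product re-assembles to $f_i(\tau;\mathbf{x}_1,\ldots,\mathbf{x}_r)$. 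Hence $\operatorname{id}_G\circ f=f$ on the nose, including the parameter $L+0=L$.

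For associativity, one expands both $g'\circ(g\circ f)$ and $(g'\circ g)\circ f$ and checks the two formulas agree termwise. Write $L,L',L''$ for the parameters of $f,g,g'$; both sides carry parameter $L+L'+L''$. The left side cuts $\tau$ at edges with $\ell\ge L+L'$ to feed $g\circ f$, then the resulting $\mathcal{A}_n$-map $g\circ f$ has parameter $L+L'$, and $g'$ cuts at its argument-coordinates $\ge L''$ — i.e. at original edges with $\ell\ge L+L'+L''$ — then passes the intermediate segments, each of which has $\ell$-values in $[L+L',\,L+L'+L'')$ shifted, through $g_r$ and then through $g'$. The right side first forms $g'\circ g\in\mathcal{A}_n(G,G')$ with parameter $L'+L''$, by the known associativity-type formula of \cite{MR3491849} for $\mathcal{A}_n$; one then cuts $\tau$ at edges with $\ell\ge L$ to feed $f$, and feeds the results into $(g'\circ g)_r$, whose own defining formula in turn cuts the index set at coordinates $\ge L'$. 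Tracking the three nested families of cut-edges — those $\ge L$, those $\ge L+L'$, those $\ge L+L'+L''$ — and the shift conventions $\ell(e)-L$, $\ell(e)-(L+L')$, etc., one finds both sides equal $g'_{\!q}\big(\cdots;\,g_{r_1}(\cdots;f(\cdots),\ldots),\ldots\big)$ with the same grouping. The main obstacle will be exactly this triple-indexed cutting bookkeeping: verifying that the partition of $I'(\tau)$ induced by the thresholds $L$, $L+L'$, $L+L'+L''$ is compatible with the way $g\circ f$ and $g'\circ g$ internally re-cut their arguments — this is where one must invoke, rather than re-prove, the associativity machinery of \cite{MR3491849} and the careful choice (made in Section \ref{section_associahedra}) of the associahedron model so that grafting and cutting are strictly, not just homotopically, associative. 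I expect the remaining verifications (continuity, relations (1)–(4), unitality) to be routine once the notation is set up; I would present associativity by reducing it to the already-established identity in the purely monoidal setting via the observation that the $A_n$-space $H$ enters only through the innermost family $\{f_{i_k}\}$, which is untouched by the outer two compositions.
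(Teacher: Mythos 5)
Your overall plan---direct verification of well-definedness, continuity, unitality and associativity from the defining formula, following \cite{MR3491849}---is exactly the paper's (the paper gives no written proof and simply asserts the theorem ``by a similar argument to that in \cite{MR3491849}''). Your treatment of continuity, of the inheritance of the four relations of the $A_n$-form (in particular that a grafting $\partial_k(\rho,\sigma)$ with $k\le r$ stays off the rightmost path and hence inside a single piece $\tau_j$, while the splitting relation for $h$ at $\ell\ge L+L'$ reduces to the splitting relation for $g$ at $\ell-L\ge L'$), and of unitality (reassembling $f_{i_1}(\tau_1;\mathbf{x}_1)\cdots f_{i_r}(\tau_r;\mathbf{x}_r)$ into $f_i(\tau;\mathbf{x}_1,\ldots,\mathbf{x}_r)$ by repeated use of relation (2) of the $A_n$-form on $f$ at the cut edges, which all satisfy $\ell\ge L$) is correct.

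Your description of the left-hand side of the associativity identity, however, is wrong, and the ``three nested families of cut-edges'' bookkeeping you propose would not close. In $g'\circ(g\circ f)$ there is no cut at the threshold $L+L'+L''$: by definition, composing $g'$ (parameter $L''$) with $g\circ f$ (parameter $L+L'$) cuts $\tau$ at the edges of $I'(\tau)$ with $\ell(e)\ge L+L'$ and evaluates $g'_q$ on the shifted values $\ell(e)-(L+L')$ and on the values of $g\circ f$ on the pieces; each piece is in turn cut at its edges with $\ell\ge L$ and fed to $g$ with shifts $\ell-L$. The map $g'_q$ is simply evaluated---it does not ``cut at coordinates $\ge L''$''; the parameter $L''$ enters only as a summand of the composite's parameter $L+L'+L''$ and in verifying relation (2) for the composite. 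So only the two thresholds $L$ and $L+L'$ occur. On the right-hand side, $(g'\circ g)\circ f$ cuts at $\ell\ge L$ and passes the shifted values $\ell-L$ to $(g'\circ g)_r$, whose defining formula regroups the blocks at coordinates $\ge L'$, i.e.\ at original edges with $\ell\ge L+L'$, with shifts $(\ell-L)-L'$. Both sides therefore yield
\[
g'_q\bigl(\ell(\tilde e_\bullet)-L-L';\,
g_\bullet\bigl(\ell(e_\bullet)-L;\,f_\bullet(\tau_\bullet;\mathbf{x}_\bullet),\ldots\bigr),\ldots\bigr)
\]
with identical groupings, and the identity holds strictly. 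Replace your three-threshold picture by this two-threshold one and the rest of your argument goes through.
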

\begin{figure}[h!]
	\label{figure_associativity}
	\includegraphics[height=54mm]{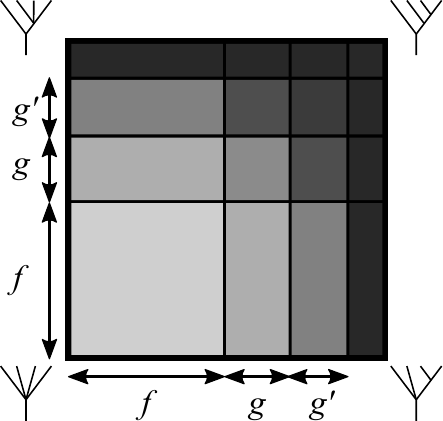}
	\caption{The associativity of the composition $g'\circ g\circ f$ in $\mathcal{K}_4$.}
\end{figure}
An $A_n$-map is said to be \textit{weak $A_n$-equivalence} if the underlying map is a weak homotopy equivalence.
Let $\mathcal{A}'_n(H,G)_{\eq}\subset\mathcal{A}'_n(H,G)$ and $\mathcal{A}_n(G,G')_{\eq}\subset\mathcal{A}_n(G,G')$ denote the subspaces of weak $A_n$-equivalences.
The following proposition can be shown by a proof similar to \cite[Proposition 4.9]{MR3491849}.
\begin{prp}
\label{prp_composition_equivalence}
Let $H$ be an $A_n$-space and $G,G'$ be topological monoids.
Assume that all of them have the homotopy extension property of the basepoint.
The composition with weak $A_n$-equivalences $f\in\mathcal{A}_n'(H,G)_{\eq}$ and $g\in\mathcal{A}_n(G,G')_{\eq}$ induce the weak homotopy equivalences
\begin{align*}
f^{\sharp}\colon\mathcal{A}_n(G,G')\xrightarrow{\simeq}\mathcal{A}_n'(H,G')
\quad
\text{and}
\quad
g_{\sharp}\colon\mathcal{A}'_n(H,G)\xrightarrow{\simeq}\mathcal{A}_n'(H,G').
\end{align*}
In particular, the canonical inclusion $(\id_G)^{\sharp}\colon\mathcal{A}_n(G,G')\to\mathcal{A}_n'(G,G')$ defined to be the composite with the identity is a weak homotopy equivalence for topological monoids $G$ and $G'$.
\end{prp}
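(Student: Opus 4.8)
The plan is to adapt the proof of \cite[Proposition 4.9]{MR3491849} to the present, parametrised and associahedral, setting, arguing by induction on $n$. Let
\[
\mathcal{A}'_n(H,G)\to\mathcal{A}'_{n-1}(H,G)
\qquad\text{and}\qquad
\mathcal{A}_n(G,G')\to\mathcal{A}_{n-1}(G,G')
\]
be the forgetful maps discarding the top component $f_n$ (respectively $\alpha_n$) but retaining the parameter $L$. Inspecting the composition formulas one sees that both $g_{\sharp}$ and $f^{\sharp}$ are compatible with these forgetful maps, since the component $h_i$ of a composite with $i<n$ involves only the components of index $<n$ of the factors. For $n=1$ the spaces $\mathcal{A}'_1(H,G)$ and $\mathcal{A}_1(G,G')$ deformation retract, via the contractible parameter $[0,\infty]$, onto $\Map_{\ast}(H,G)$ and $\Map_{\ast}(G,G')$, under which $g_{\sharp}$ becomes post-composition with $g$ and $f^{\sharp}$ becomes pre-composition with the underlying map $f\colon H\to G$; since a weak $A_n$-equivalence is a weak homotopy equivalence on underlying spaces and all spaces in sight are well pointed, the case $n=1$ follows from standard properties of mapping spaces.

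For the inductive step I would first show that the forgetful maps are fibrations. The fiber of $\mathcal{A}'_n(H,G)\to\mathcal{A}'_{n-1}(H,G)$ over a given $A_{n-1}$-form is the space of extensions to all of $\mathcal{K}_{n+1}\times H^n$ of a prescribed map $\varphi$ defined on the subspace $A_n\subset\mathcal{K}_{n+1}\times H^n$ cut out by conditions (1), (2), (4) in the definition of an $A_n$-form. Concretely $A_n$ is the union of the grafting boundary faces $\partial_k(\rho,\sigma)$ with $r+s-1=n$, the ``$L$-rigidified'' region $\{\,\partial^{\ell}_{r+1}(\rho,\sigma)\mid \ell\ge L\,\}$ coming from condition (2), and $\mathcal{K}_{n+1}\times T_n$ coming from condition (4), where $T_n\subset H^n$ is the fat wedge. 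Since $\partial\mathcal{K}_{n+1}\hookrightarrow\mathcal{K}_{n+1}$ is a cofibration ($\mathcal{K}_{n+1}$ being a ball with a finite CW face structure) and $T_n\hookrightarrow H^n$ is a cofibration (the basepoint of $H$ having the homotopy extension property), a combinatorial argument shows that $A_n\hookrightarrow\mathcal{K}_{n+1}\times H^n$ is a cofibration; hence $\Map(\mathcal{K}_{n+1}\times H^n,G)\to\Map(A_n,G)$ is a fibration, and the forgetful map, being a pullback of it along the map recording $\varphi$, is a fibration with the stated fiber. The same applies to $\mathcal{A}_n(G,G')\to\mathcal{A}_{n-1}(G,G')$, with $\mathcal{K}_{n+1}$ replaced by the cube $[0,\infty]^{n-1}$ and conditions (2), (3) of the definition of $\mathcal{A}_n$.

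Next I would identify the maps induced on fibers. Tracing the composition formulas, on the fiber the map $g_{\sharp}$ is, up to a homeomorphism of the source, post-composition with $g\colon G\to G'$ on the relative mapping space of extensions of $\varphi$, while $f^{\sharp}$ is, through the standard homeomorphism between the top cell of $\mathcal{K}_{n+1}$ and $[0,\infty]^{n-1}$ (realised by the right comb with edge lengths as coordinates), pre-composition with $\id\times f^{\times n}\colon[0,\infty]^{n-1}\times H^n\to[0,\infty]^{n-1}\times G^n$. As $g$ and $f$ (hence $f^{\times n}$) are weak homotopy equivalences and the pairs in question are relative CW-type pairs assembled from $\mathcal{K}_{n+1}$, the cube and well-pointed spaces, these induced maps are weak homotopy equivalences on fibers. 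Together with the inductive hypothesis on the base and the five lemma applied to the long exact homotopy sequences of the two fibrations, this shows that $g_{\sharp}$ and $f^{\sharp}$ are weak homotopy equivalences at level $n$, completing the induction.

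The step I expect to be the main obstacle is the cofibration claim for $A_n\hookrightarrow\mathcal{K}_{n+1}\times H^n$ and its cubical analogue: one must describe precisely the subspace carved out by the grafting relation (1), the ``$L$-rigidification'' relation (2) — which prescribes $f_n$ on an interior collar as well as on boundary faces of $\mathcal{K}_{n+1}$ — and the degeneracy relation (4) involving the fat wedge, and verify that their union is built from the basic cofibrations above; this is where the argument of \cite[Proposition 4.9]{MR3491849} genuinely needs to be reworked. A secondary nuisance is the bookkeeping that identifies the fiber maps with honest pre/post-composition through the cube-versus-associahedron comparison; alternatively, once the equivalence $\mathcal{A}'_n(G,G')\simeq\mathcal{A}_n(G,G')$ is available one may use it to recast $f^{\sharp}$ as a pre-composition entirely within the associahedral model.
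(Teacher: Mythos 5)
The paper offers no proof of this proposition beyond the remark that it follows by an argument similar to \cite[Proposition 4.9]{MR3491849}, and your proposal is a faithful reconstruction of exactly that argument: restriction maps as fibrations with fibers given by relative mapping spaces, identification of the induced fiber maps with pre/post-composition along the weak equivalences, and induction via the five lemma. This is correct in outline and is essentially the same approach the paper intends, with the only genuine work being the cofibration and fiber-identification bookkeeping you already flag.
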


\section{Fiberwise $A_n$-spaces and classification theorem}
\label{section_fwAn}

Let $B$ be a space.
We follow the terminology of Crabb--James \cite{MR1646248} as follows.
A \textit{fiberwise space} is just a map $\pi\colon E\to B$ called the projection and a \textit{fiberwise pointed space} is a fiberwise space $\pi\colon E\to B$ equipped with a section $\sigma\colon B\to E$ assigning the basepoint to each fiber.
A \textit{fiberwise maps} and a \textit{fiberwise pointed map} are a map $E\to E'$ compatible with projections and sections in the obvious sense.
Let $\Map_B(E,E')$ and $\Map_B^B(E,E')$ denote the space of fiberwise and fiberwise pointed maps, respectively.
The fiber product $E\times_BE'$ of $E$ and $E'$ is exactly the categorical product with respect to fiberwise (pointed) maps.

A \textit{fiberwise $A_n$-space} $(E,\{m_i\}_i)$ over $B$ is a pair of a fiberwise pointed space $E$ over $B$ and a \textit{fiberwise $A_n$-form} $\{m_i\colon\mathcal{K}_i\times E^i\to E\}_i$, where $E^i$ means the $i$-fold fiber product $E\times_B\cdots\times_BE$.
Here, a fiberwise $A_n$-form is defined in the same way as in Section \ref{section_an-space}.
A fiberwise $A_n$-map between a fiberwise $A_n$-space and a fiberwise topological monoid and between fiberwise topological monoids are also similarly defined.

We take a product-preserving functor $\mathcal{Q}\colon\mathbf{CG}\to\mathbf{CG}$ as assigning a CW replacement.
For example, it is sufficient to take $\mathcal{Q}X$ to be the geometric realization of the singular simplices of a space $X$.
\begin{thm}
\label{thm_classification_fwAn}
Let $E$ be a fiberwise topological monoid over a connected pointed CW complex $B$ such that the projection $E\to B$ is a Hurewicz fibration, the section $B\to E$ has the homotopy extension property and the fiber over the basepoint is $A_n$-equivalent to a topological monoid $G$ where $G$ is a CW complex.
Then there exists a map $B\to B\mathcal{Q}\mathcal{A}_n(G,G)_{\eq}^{\op}$, which we will call a \textit{classifying map}, unique up to homotopy such that the pullback $f^\ast\tilde{E}$ of the universal fiberwise $A_n$-space $\tilde{E}$ is fiberwise $A_n$-equivalent to $E$.
\end{thm}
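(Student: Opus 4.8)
The plan is to reduce the statement to the classical Dold--Lashof--Stasheff classification of fibrations whose structure ``group'' is a topological monoid, exploiting the fact, established in Section \ref{section_monoid}, that the self-$A_n$-equivalences of $G$ form a genuine, strictly associative topological monoid even though the fiberwise $A_n$-structure on $E$ is only coherent up to higher homotopy. Write $M=\mathcal{W}\mathcal{A}_n(G,G)_{\eq}$. Since an $A_n$-equivalence admits an $A_n$-inverse up to homotopy (\cite{MR3491849}), $M$ is a grouplike topological monoid whose underlying space is a CW complex, and evaluation equips $G$ with a tautological left $A_n$-action of $M$; on underlying spaces this is a strict monoid action because composition in $\mathcal{A}_n$ is strictly associative.

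First I would build the universal object. Applying the bar construction functor $B_n$ of Section \ref{section_fwAn} to the evaluation data one forms the Borel construction $\tilde E\to B\mathcal{W}\mathcal{A}_n(G,G)_{\eq}^{\op}$ with fiber $G$; by Lemma \ref{lem_bar_quasifib} and its fiberwise analogue this is a quasifibration, which one replaces by a Hurewicz fibration carrying a section with the homotopy extension property in the standard way. The $A_n$-forms carried by the points of $M$ depend continuously on the point and are compatible with composition, so they assemble into a fiberwise $A_n$-structure on $\tilde E$; this is the universal fiberwise $A_n$-space.

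Next, for the existence of a classifying map, given $E$ as in the hypotheses I would form the fiberwise space $\Prin(E)\to B$ whose fiber over $b$ is a CW model for the space of fiberwise $A_n$-equivalences $G\xrightarrow{\simeq}E_b$, with the evident right $M$-action by precomposition. Because $B$ is connected, $E\to B$ is a Hurewicz fibration, and parallel transport along paths carries the fiberwise $A_n$-forms, every fiber of $\Prin(E)$ is nonempty and has the homotopy type of $M$ compatibly with the action; hence $\Prin(E)\to B$ is a principal $M$-fibration and $\Prin(E)/M\simeq B$. The classical classification of principal $M$-fibrations then produces a map $f\colon B\to B\mathcal{W}\mathcal{A}_n(G,G)_{\eq}^{\op}$, unique up to homotopy, with $\Prin(E)$ equivalent as a principal $M$-space to the pullback of the universal one; the evaluation map $\Prin(E)\times_M G\to E$, $(\phi,x)\mapsto\phi(x)$, is then a fiberwise map which is an equivalence on each fiber and compatible with the $A_n$-forms, hence a fiberwise $A_n$-equivalence $f^\ast\tilde E\simeq E$. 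For uniqueness, a fiberwise $A_n$-equivalence $f_0^\ast\tilde E\simeq f_1^\ast\tilde E$ induces an equivalence of the associated principal $M$-fibrations over $B$, and the classical uniqueness for principal $M$-fibrations forces $f_0\simeq f_1$.

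I expect the real work to lie not in this formal skeleton but in three technical points, which is where the arguments of \cite{MR2876314,MR3327166} are genuinely used: (i) promoting the single hypothesis ``the fiber over the basepoint is $A_n$-equivalent to $G$'' to the coherent statement that \emph{all} fibers of $E$ are $A_n$-equivalent to $G$, using path-lifting in the Hurewicz fibration together with continuity of the fiberwise $A_n$-forms; (ii) the passage back and forth between quasifibrations and Hurewicz fibrations, and the compatibility of sections with the homotopy extension property, for which Lemma \ref{lem_bar_quasifib} and the grouplikeness of $M$ are the essential inputs; and (iii) verifying that the evaluation maps occurring above are honest fiberwise $A_n$-maps and honest fiberwise homotopy equivalences, not merely maps that happen to be equivalences on each fiber. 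The delicate point throughout is keeping the $A_n$-coherence data strict enough, via the topological monoid $\mathcal{A}_n(G,G)_{\eq}$, that the classical bundle-theoretic arguments apply essentially verbatim.
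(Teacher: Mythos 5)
Your overall strategy---reduce to the classification of principal fibrations for the grouplike topological monoid of self-$A_n$-equivalences of $G$, with the universal object produced by a bar construction and the given $E$ encoded by an associated principal object---is the same as the paper's, which delegates the substance to \cite{MR2876314,MR3327166} and adds only the identification of the classifying space $M_n(G)$ constructed there with $B\mathcal{W}\mathcal{A}_n(G,G)_{\eq}^{\op}$. But there is one concrete gap: your principal object is wrongly oriented. You take $\Prin(E)$ to have fiber over $b$ the space of $A_n$-equivalences $G\to E_b$ with the right action by precomposition. In the framework of this paper that space does not exist: $E_b$ is merely an $A_n$-space, and $A_n$-maps are defined here only when the \emph{target} is a topological monoid (the spaces $\mathcal{A}'_n(H,G)$ for $H$ an $A_n$-space and $G$ a monoid); moreover the only composition that is constructed, and the only one that is strictly associative (Theorem \ref{thm_composite_A'}), is post-composition $\mathcal{A}_n(G,G')\times\mathcal{A}'_n(H,G)\to\mathcal{A}'_n(H,G')$. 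The paper therefore works with $\Prin'(\tilde E)^{\op}=\coprod_b\mathcal{A}'_n(E_b,G)_{\eq}$, i.e.\ equivalences \emph{out of} the fibers, on which post-composition by $\mathcal{A}_n(G,G)_{\eq}$ is a left action, hence a right action of the opposite monoid; this is precisely why the classifying space is $B\mathcal{W}\mathcal{A}_n(G,G)_{\eq}^{\op}$ and is the content of Remark \ref{rem_thm_classification_fwAn}. Your sketch carries the superscript $\op$ along notationally, but a precomposition action, were it defined, would be a right action of $\mathcal{A}_n(G,G)_{\eq}$ itself, so the $\op$ is unexplained in your version.

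The repair is to reverse all arrows in $\Prin(E)$ and replace precomposition by post-composition; then weak contractibility of the total space (by the argument of \cite[Section 5]{MR2876314}) together with Proposition \ref{prp_composition_equivalence}, which makes the action principal in the homotopical sense, yields the theorem along the lines you outline. One further caution: your construction of the universal object as the Borel construction of the evaluation action of $M$ on the underlying space of $G$, with the fiberwise $A_n$-forms ``assembling'' afterwards, is exactly where the real work of \cite{MR2876314} lies and should not be waved through; the paper avoids redoing this by citing the existence of $M_n(G)$ and its universal fiberwise $A_n$-space and then identifying $M_n(G)$ with $B\mathcal{W}\mathcal{A}_n(G,G)_{\eq}^{\op}$ a posteriori.
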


\begin{rem}
\label{rem_thm_classification_fwAn}
We take the classifying space as $B\mathcal{Q}\mathcal{A}_n(G,G)_{\eq}^{\op}$ rather than the usual one $B\mathcal{Q}\mathcal{A}_n(G,G)_{\eq}$.
The reason for this is that we defined only the composition of $A_n$-maps between topological monoids from the left.
\end{rem}

\begin{proof}
In the classification theorem in \cite{MR2876314}, fiberwise $A_n$-spaces are not assumed to be unital.
But, as stated in \cite[Section 7]{MR3327166}, our theorem is proved by a similar argument except the point that the classifying space there $M_n(G)$ coincides with $B\mathcal{Q}\mathcal{A}_n(G,G)_{\eq}^{\op}$ up to canonical weak homotopy equivalence.
By the same argument as in \cite[Section 5]{MR2876314}, the space
\[
\Prin'(\tilde{E})^{\op}=
\coprod_{b\in M_n(G)}\mathcal{A}'_n(\tilde{E}_b,G)_{\eq}
\]
equipped with an appropriate topology is weakly contractible, where $\tilde{E}_b$ denotes the fiber over the point $b\in M_n(G)$.
Moreover, the composition in Theorem \ref{thm_composite_A'} defines a right action by the grouplike topological monoid $\mathcal{A}_n(G,G)_{\eq}^{\op}$, which is the topological monoid $\mathcal{A}_n(G,G)_{\eq}$ equipped with the opposite multiplication.
Together with Proposition \ref{prp_composition_equivalence}, this implies that $\Prin'(\tilde{E})^{\op}$ is a universal principal fibration for the grouplike topological monoid $\mathcal{A}_n(G,G)_{\eq}^{\op}$.
Thus the space $M_n(G)$ is weakly homotopy equivalent to $B\mathcal{Q}\mathcal{A}_n(G,G)_{\eq}^{\op}$.
\end{proof}

As in \cite[Section 2]{MR2876314}, locally sliceable fiberwise spaces $E,E'$ admit the \textit{fiberwise mapping space} $\map_B(E,E')$, where $\map_B(E,-)$ is the right adjoint functor to $E\times_B-$.
Since it is compatible with pullback by maps of base spaces, one can see that the projection $\map_B(E,E')$ is a Hurewicz fibration when $E\to B$ and $E'\to B$ are Hurewicz fibrations.
Together with it, the fiberwise mapping spaces
\[
\mathscr{A}'_{n}(E,E')=\coprod_{b\in B}\mathcal{A}'_n(E_b,E'_b)
\]
between a fiberwise $A_n$-space $E$ and a fiberwise topological monoid $E'$ over $B$ is naturally topologized and becomes a fiberwise space over $B$.
Also, if the projections $E\to B$ and $E'\to B$ are Hurewicz fibrations and the sections $B\to E$ and $B\to E'$ have the homotopy extension properties, then the projection $\mathscr{A}'_{n}(E,E')\to B$ is a Hurewicz fibration.

The following proposition describes the classifying map of the fiberwise mapping space $\mathscr{A}'_{n}(E,E')$.
Note that when $E'$ is a fiberwise topological monoid with fibers $A_n$-equivalent to a topological monoid $G$, the classifying map is the one $B\to B\mathcal{Q}\mathcal{A}_n(G,G)_{\eq}$ of the principal fibration
\[
\Prin(E')=
\coprod_{b\in B}\mathcal{A}_n(G,E'_b)_{\eq}
\]
equipped with the usual right action $\mathcal{A}_n(G,G)_{\eq}$.

\begin{prp}
\label{prp_classifying_mapping}
Let $E$ be a fiberwise $A_n$-space over $B$ with fibers $A_n$-equivalent to a topological monoid $H$ and $E'$ be a fiberwise topological monoid over $B$ with fibers $A_n$-equivalent to a topological monoid $G$.
Suppose $B,G,H$ are CW complexes and the projections of $E$ and $E'$ are Hurewicz fibrations and the sections of $E$ and $E'$ have the homotopy extension property.
If $E$ is classified by a map $\alpha\colon B\to B\mathcal{Q}\mathcal{A}_n(H,H)_{\eq}^{\op}$ and $E'$ is classified by a map $\beta\colon B\to B\mathcal{Q}\mathcal{A}_n(G,G)_{\eq}$, then the fiberwise mapping space $\mathscr{A}'_{n}(E,E')$ is classified by the composite
\[
B
\xrightarrow{(\alpha,\beta)}
B\mathcal{Q}\mathcal{A}_n(H,H)_{\eq}^{\op}\times B\mathcal{Q}\mathcal{A}_n(G,G)_{\eq}
\xrightarrow{B\mathcal{Q}\Theta}
B\mathcal{Q}\Map(\mathcal{Q}\mathcal{A}_n(H,G),\mathcal{Q}\mathcal{A}_n(H,G))_{\eq}
\]
where $\Map(X,Y)_{\eq}\subset\Map(X,Y)$ denotes the subset of weak equivalences and the homomorphism
\[
\Theta\colon
\mathcal{A}_n(H,H)_{\eq}^{\op}\times\mathcal{A}_n(G,G)_{\eq}
\to
\Map(\mathcal{A}_n(H,G),\mathcal{A}_n(H,G))_{\eq}
\]
is given by $\Theta(f,g)(h)=g\circ h\circ f$.
\end{prp}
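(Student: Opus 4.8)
The plan is to reduce the statement to a formal property of the classification theorem, namely that classifying maps are natural with respect to functors on mapping spaces that are compatible with the relevant monoid actions. First I would recall how a fiberwise $A_n$-space $E$ over $B$ with fiber $H$ gives rise, via Theorem \ref{thm_classification_fwAn}, to a principal $\mathcal{W}\mathcal{A}_n(H,H)_{\eq}^{\op}$-fibration $\Prin'(E)^{\op}=\coprod_{b\in B}\mathcal{A}'_n(H,E_b)_{\eq}$ whose Borel construction recovers $E$; the classifying map $\alpha$ is the one classifying this principal fibration. Likewise $E'$ gives the principal $\mathcal{W}\mathcal{A}_n(G,G)_{\eq}$-fibration $\Prin(E')=\coprod_b\mathcal{A}_n(G,E'_b)_{\eq}$ with classifying map $\beta$. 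The pair $(\alpha,\beta)$ then classifies the fiberwise product $\Prin'(E)^{\op}\times_B\Prin(E')$ as a principal fibration for the product monoid $\mathcal{W}\mathcal{A}_n(H,H)_{\eq}^{\op}\times\mathcal{W}\mathcal{A}_n(G,G)_{\eq}$.

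Next I would identify the fiberwise mapping space $\mathscr{A}'_n(E,E')$ as a Borel-type construction associated to this principal fibration and the homomorphism $\Theta$. Fiberwise over $b\in B$, after choosing a weak $A_n$-equivalence $u\colon H\to E_b$ and a weak $A_n$-equivalence $v\colon G\to E'_b$ (an element of $\Prin'(E)^{\op}$ and of $\Prin(E')$ respectively), every $A_n$-map $E_b\to E'_b$ is of the form $v\circ h\circ \bar u$ for a unique $h\in\mathcal{A}_n(H,G)$, where $\bar u$ is a homotopy inverse; more precisely, using Proposition \ref{prp_composition_equivalence} the maps $u^{\sharp}$ and $v_{\sharp}$ give a weak equivalence $\mathcal{A}_n(H,G)\xrightarrow{\simeq}\mathcal{A}'_n(E_b,E'_b)$, and this identification is exactly the one twisted by $\Theta$ when the frames $(u,v)$ are changed by the monoid actions. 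Hence
\[
\mathscr{A}'_n(E,E')\simeq\bigl(\Prin'(E)^{\op}\times_B\Prin(E')\bigr)\times_{\mathcal{W}\mathcal{A}_n(H,H)_{\eq}^{\op}\times\mathcal{W}\mathcal{A}_n(G,G)_{\eq}}\mathcal{W}\mathcal{A}_n(H,G),
\]
the associated bundle with fiber $\mathcal{W}\mathcal{A}_n(H,G)$ via $\Theta$. Since forming the associated bundle with fiber $F$ along a homomorphism $\phi\colon K\to \Map(F,F)_{\eq}$ corresponds, on classifying maps, to post-composition with $B\phi$, and since the classifying map of a pullback of principal fibrations along $(\alpha,\beta)$ is $(\alpha,\beta)$ itself, the classifying map of $\mathscr{A}'_n(E,E')$ is $B\mathcal{W}\Theta\circ(\alpha,\beta)$, as claimed. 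One has to check the compatibility of $\mathcal{W}$ with products here, which holds by our choice of $\mathcal{W}$ as a product-preserving functor.

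The main obstacle I anticipate is the bookkeeping in the identification of $\mathscr{A}'_n(E,E')$ as an associated bundle: one must verify that the local trivializations of $E$ and of $E'$ induce local trivializations of $\mathscr{A}'_n(E,E')$ whose transition functions are precisely $\Theta$ applied to the transition functions of $\Prin'(E)^{\op}$ and $\Prin(E')$, and that the topology on $\mathscr{A}'_n(E,E')$ coming from the fiberwise mapping space $\map_B$ agrees with the one from the associated-bundle description. The subtle sign is the use of the \emph{opposite} monoid on the source side: the action of $\mathcal{A}_n(H,H)_{\eq}$ on $\mathcal{A}_n(H,G)$ by precomposition $h\mapsto h\circ f$ is a \emph{right} action, hence a left action of the opposite monoid, which is why $\Theta$ is a genuine homomorphism out of $\mathcal{A}_n(H,H)_{\eq}^{\op}\times\mathcal{A}_n(G,G)_{\eq}$; this matches the convention in Theorem \ref{thm_classification_fwAn} and Remark \ref{rem_thm_classification_fwAn}. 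Once the transition-function computation is in place, associativity of the composition (Theorem \ref{thm_composite_A'}) guarantees that $\Theta$ is well defined and that the associated bundle is honestly functorial, and the rest is the formal naturality of classifying maps.
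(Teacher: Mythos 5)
Your proposal is correct and is essentially the paper's argument: the paper realizes your ``associated bundle via $\Theta$'' as the two-sided bar construction $B(\Prin'(\tilde E)^{\op}\times\Prin(\tilde E'),\mathcal{W}\mathcal{G},\mathcal{W}\mathcal{A}_n(H,G))$ over the universal base and identifies it with $\mathscr{A}'_n(\tilde E,\tilde E')$ by quasifibration/homotopy-pullback arguments (Lemma \ref{lem_bar_quasifib} and Proposition \ref{prp_composition_equivalence}), then pulls back along $(\alpha,\beta)$ — which also disposes of your worry about local trivializations and transition functions. One small correction: the frames on the source side are elements of $\mathcal{A}'_n(E_b,H)_{\eq}$ (maps \emph{out of} the fiber into $H$, since $\mathcal{A}'_n$ requires a monoid target), not $\mathcal{A}'_n(H,E_b)_{\eq}$, which is exactly why the opposite monoid and the formula $h\mapsto v\circ h\circ u$ come out right.
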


\begin{proof}
Let $\tilde{E}\to B\mathcal{Q}\mathcal{A}_n(H,H)_{\eq}^{\op}$ and $\tilde{E}'\to B\mathcal{Q}\mathcal{A}_n(G,G)_{\eq}$ be the universal fiberwise $A_n$-spaces.
We define $\mathcal{G}=\mathcal{A}_n(H,H)_{\eq}^{\op}\times\mathcal{A}_n(G,G)_{\eq}$ and $\mathcal{B}=B\mathcal{QG}=B\mathcal{Q}\mathcal{A}_n(H,H)_{\eq}^{\op}\times B\mathcal{Q}\mathcal{A}_n(G,G)_{\eq}$ for simplicity.
Consider the commutative diagram
\[
\xymatrix{
\mathscr{A}'_{n}(\tilde{E},\tilde{E}') \ar[d]
& B(\Prin'(\tilde{E})^{\op}\times\Prin(\tilde{E}'),\mathcal{QG},\mathcal{Q}\mathcal{A}_n(H,G))
	\ar[l] \ar[r] \ar[d]
& B(\ast,\mathcal{QG},\mathcal{Q}\mathcal{A}_n(H,G)) \ar[d] \\
\mathcal{B}
& B(\Prin'(\tilde{E})^{\op}\times\Prin(\tilde{E}'),\mathcal{QG},\ast) \ar[l] \ar[r]
& B(\ast,\mathcal{QG},\ast)
}
\]
where the left arrows are induced from the compositions of $A_n$-maps and the squares are homotopy pullback by Lemma \ref{lem_bar_quasifib}.
Since the vertical maps are quasifibrations and the bottom arrows are  weak homotopy equivalences, the top horizontal arrows are also weak homotopy equivalences.
Inverting the bottom left arrow, we obtain the composite
\[
\mathcal{B}=B\mathcal{QG}
\to
B(\Prin'(\tilde{E})^{\op}\times\Prin(\tilde{E}'),\mathcal{QG},\ast)
\to
B\mathcal{QG},
\]
which is indeed homotopic to the identity map since it is the classifying map of the universal principal fibration.
Consider the homotopy pullback square
\[
\xymatrix{
B(\ast,\mathcal{QG},\mathcal{Q}\mathcal{A}_n(H,G)) \ar[r] \ar[d]
& B(\ast,\mathcal{Q}\Map(\mathcal{Q}\mathcal{A}_n(H,G),\mathcal{Q}\mathcal{A}_n(H,G))_{\eq},
	\mathcal{Q}\mathcal{A}_n(H,G))
	\ar[d] \\
B\mathcal{QG} \ar[r]^-{B\mathcal{Q}\Theta}
& B\mathcal{Q}\Map(\mathcal{Q}\mathcal{A}_n(H,G),\mathcal{Q}\mathcal{A}_n(H,G))_{\eq}.
}
\]
This implies that the Hurewicz fibration $\mathscr{A}'_{n}(\tilde{E},\tilde{E}')\to\mathcal{B}$ is classified by $B\mathcal{Q}\Theta$.
Since $\mathscr{A}'_{n}(E,E')$ is the pullback of $\mathscr{A}'_n(\tilde{E},\tilde{E}')\to\mathcal{B}$ by the map $(\alpha,\beta)\colon B\to\mathcal{B}$, the proposition follows.
\end{proof}

\section{$N_k(\ell)$-map}
\label{section_normal}

We introduce $N_k(\ell)$-map mimicking Definition \ref{dfn_crossed} of crossed module as follows.

\begin{dfn}
\label{dfn_(k,l)-normal}
Let $f\colon H\to G$ be a homomorphism between topological groups and $k,\ell$ be positive integers or infinity.
We say the homomorphism $f$ is an \textit{$N_k(\ell)$-map} if an $A_k$-map $\rho\colon G\to\mathcal{A}_\ell(H,H)$ is given and the following conditions hold:
\begin{enumerate}
\item
the composite of $A_k$-maps
\[
H
\xrightarrow{f}
G
\xrightarrow{\rho}
\mathcal{A}_\ell(H,H)
\]
is homotopic to $\conj\colon H\to\mathcal{A}_\ell(H,H)$ ($\conj(h)(x)=hxh^{-1}$) as an $A_k$-map,
\item
the pair of a map
\[
\text{point}
\to
\mathcal{A}_\ell(H,G),
\quad
\ast\mapsto f
\]
and the $A_k$-map
\[
\theta\colon G\to\Map(\mathcal{A}_\ell(H,G),\mathcal{A}_\ell(H,G))_{\eq},
\quad
\theta(g)(\alpha)=\conj(g)\circ\alpha\circ\rho(g^{-1}).
\]
extends to an $A_k$-equivariant map $(G,\ast)\to(\Map(\mathcal{A}_\ell(H,G),\mathcal{A}_\ell(H,G))_{\eq},\mathcal{A}_\ell(H,G))$,
\item
the composite of the $A_k$-form of the previous $A_k$-equivariant map with $f\colon(H,\ast)\to(G,\ast)$ is homotopic to the trivial one coming from the equality $\conj(f(h))\circ f\circ\conj(h^{-1})=f$ for any $h\in H$.
\end{enumerate}
\end{dfn}

This definition could be extended to an $A_n$-map $f$.
To do so, it is necessary to determine how good coherence can be guaranteed for the action of $H$ on $\Map(\mathcal{A}_\ell(H,G),\mathcal{A}_\ell(H,G))_{\eq}$ in the condition (3).
We avoid this problem and concentrate on homomorphisms here.

If a homomorphism $f\colon H\to G$ is an $N_k(\ell)$-map and $k'\le k,\ell'\le\ell$, then $f$ is obviously an $N_{k'}(\ell')$-map.

If $f\colon H\to G$ is a crossed module, then $f$ is obviously an $N_\infty(\infty)$-map.
This is the only case when we can verify the condition of Definition \ref{dfn_(k,l)-normal} directly since it contains very complicated higher homotopical conditions in the present work.
The following theorem provides us the way to check the conditions from the obstruction theoretic point of view.

\begin{thm}
\label{thm_main}
Let $f\colon H\to G$ be a homomorphism between topological groups $G,H$, of which the underlying spaces are CW complexes, and $k,\ell$ are non-negative integers.
Then the homomorphism $f$ is an $N_k(\ell)$-map if and only if there exists a fiberwise $A_\ell$-space $E$ over $B_kG$ and fiberwise $A_\ell$-maps $\phi\colon(B_kf)^\ast E\to E_kH\times_{\conj}H$ over $B_kH$ and $\psi\colon E\to E_kG\times_{\conj}G$ over $B_kG$ satisfying the following conditions:
\begin{enumerate}
\item
$\phi$ restricts to a weak $A_\ell$-equivalence on each fiber,
\item
for the restrictions to the fiber over the basepoint $\phi_\ast,\psi_\ast$, the composite $f\circ\phi_\ast\colon E_\ast\to G$, where $E_\ast$ is the fiber of $E$ over the basepoint, is homotopic to $\psi_\ast$ as an $A_\ell$-map,
\item
the composite of $\phi$ and the induced map $E_kH\times_{\conj}H\to E_kH\times_{\conj\circ f}G$ of $f$ is homotopic to $(B_kf)^\ast\psi$ as a fiberwise $A_\ell$-map.
\end{enumerate}
\end{thm}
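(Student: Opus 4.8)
The plan is to translate each clause of Definition~\ref{dfn_(k,l)-normal} into a statement about fiberwise $A_\ell$-spaces over $B_kG$ via the classification machinery of Sections~\ref{section_fwAn}, and then read off the correspondence. First I would record the basic dictionary. The conjugation action of $G$ on itself is an honest homomorphism $\conj\colon G\to\Aut(G)\subset\mathcal{A}_\infty(G,G)_{\eq}$, hence certainly an $A_k$-map; applying the bar construction gives the fiberwise topological monoid $E_kG\times_{\conj}G\to B_kG$, which by Theorem~\ref{thm_classification_fwAn} is classified (as a fiberwise $A_\ell$-space) by the composite $B_kG\to B_k\mathcal{A}_\ell(G,G)_{\eq}^{\op}$ coming from $\conj$. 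Likewise $E_kH\times_{\conj}H\to B_kH$ is classified by $B_k(\conj_H)\colon B_kH\to B_k\mathcal{A}_\ell(H,H)_{\eq}^{\op}$, and its pullback along $B_kf$ is classified by $B_k(\conj_H\circ f)$. By Proposition~\ref{prp_classifying_mapping}, the fiberwise mapping space $\mathscr{A}'_\ell((B_kf)^\ast(E_kH\times_{\conj}H),\,E_kG\times_{\conj\circ f}G)$ over $B_kH$ — and its analogue over $B_kG$ — has classifying map computed from $\Theta(f,g)(h)=g\circ h\circ f^{-1}$; this is exactly the $\theta$ appearing in Definition~\ref{dfn_(k,l)-normal}(2). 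Theorem~\ref{thm_Tsu16} and Proposition~\ref{prp_composition_equivalence} let me pass freely between $A_k$-maps and maps of projective spaces throughout.

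For the forward direction, suppose $f$ is an $N_k(\ell)$-map with data $(\rho,\theta,\dots)$. The $A_k$-map $\rho\colon G\to\mathcal{A}_\ell(H,H)_{\eq}^{\op}$ (condition~(1) forces $\rho$ to land in equivalences, since $\conj$ does) has an adjoint $B_k\rho\colon B_kG\to B_k\mathcal{A}_\ell(H,H)_{\eq}^{\op}$, and I define $E$ to be the fiberwise $A_\ell$-space over $B_kG$ classified by $B_k\rho$. Condition~(1), that $\rho\circ f\simeq\conj_H$ as $A_k$-maps, says precisely that $B_k\rho\circ B_kf$ is homotopic to the classifying map of $E_kH\times_{\conj}H$; by the uniqueness clause of Theorem~\ref{thm_classification_fwAn} this produces a fiberwise $A_\ell$-equivalence $\phi\colon(B_kf)^\ast E\xrightarrow{\simeq}E_kH\times_{\conj}H$, giving condition~(1) of the theorem. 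The $A_k$-equivariant extension in Definition~\ref{dfn_(k,l)-normal}(2) is, after applying $B_k(-)$ to the pair $(G\curvearrowright\mathcal{A}_\ell(H,G)_{\eq},\ f)$, a section over $B_kG$ of the fiberwise mapping space $\mathscr{A}'_\ell(E,\,E_kG\times_{\conj}G)$ — here I use that this mapping space is classified by $\Theta(\rho(g^{-1}),\conj(g))$, matching $\theta$ — and a section of a fiberwise mapping space between fiberwise $A_\ell$-spaces is the same as a fiberwise $A_\ell$-map $\psi\colon E\to E_kG\times_{\conj}G$. The basepoint fiber of this section is the chosen point $f\in\mathcal{A}_\ell(H,G)$, which after composing with $\phi_\ast$ gives condition~(2). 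Finally, Definition~\ref{dfn_(k,l)-normal}(3) — that restricting the equivariant $A_k$-form along $f$ recovers the trivial form built from $\conj(f(h))\circ f\circ\conj(h^{-1})=f$ — unwinds, under $B_k(-)$ and the pullback along $B_kf$, precisely to the homotopy of fiberwise $A_k$-maps required in condition~(3): it identifies $(B_kf)^\ast\psi$ with the composite of $\phi$ and $E_kH\times_{\conj}H\to E_kH\times_{\conj\circ f}G$.

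The converse runs the same construction backwards: given $(E,\phi,\psi)$, the fiberwise $A_\ell$-space $E$ over $B_kG$ is classified by some $\alpha\colon B_kG\to B_k\mathcal{A}_\ell(H,H)_{\eq}^{\op}$, which by Theorem~\ref{thm_Tsu16} is the adjoint of an $A_k$-map $\rho\colon G\to\mathcal{A}_\ell(H,H)_{\eq}^{\op}$ unique up to homotopy; condition~(1) of the theorem (that $\phi$ is a fiberwise equivalence over $B_kH$) forces $\alpha\circ B_kf$ to classify $E_kH\times_{\conj}H$, hence $\rho\circ f\simeq\conj_H$, which is Definition~\ref{dfn_(k,l)-normal}(1). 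The fiberwise $A_\ell$-map $\psi$ is a section of $\mathscr{A}'_\ell(E,E_kG\times_{\conj}G)$, which by Proposition~\ref{prp_classifying_mapping} is classified by the $\theta$ of the definition; a section of this fibration, together with the basepoint-fiber datum from condition~(2), is exactly the $A_k$-equivariant extension in Definition~\ref{dfn_(k,l)-normal}(2); and condition~(3) of the theorem translates back to Definition~\ref{dfn_(k,l)-normal}(3) by the same unwinding.

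I expect the main obstacle to be the bookkeeping in the equivalence between \emph{sections of the fiberwise mapping space} $\mathscr{A}'_\ell(E,E_kG\times_{\conj}G)$ and \emph{fiberwise $A_\ell$-maps} $E\to E_kG\times_{\conj}G$, carried out compatibly with the $A_k$-equivariant structure over $B_kG$ — that is, verifying that the $A_k$-equivariant map in Definition~\ref{dfn_(k,l)-normal}(2) is genuinely the $B_k(-)$-image of (equivalently, gives rise to) a section whose classifying data is $\Theta$, and that condition~(3) there really is the restriction-along-$f$ compatibility and nothing more. This is where Proposition~\ref{prp_classifying_mapping} with its explicit formula $\Theta(f,g)(h)=g\circ h\circ f$ does the heavy lifting, but matching the variances (the $\op$'s coming from left-composition only) and the basepoint conditions cleanly will require care; everything else is a formal consequence of Theorems~\ref{thm_Tsu16} and~\ref{thm_classification_fwAn} together with Propositions~\ref{prp_composition_equivalence} and~\ref{prp_classifying_mapping}.
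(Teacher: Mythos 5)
Your proposal is correct and follows essentially the same route as the paper: condition (1) of the definition is matched to condition (1) of the theorem via the classification theorem for fiberwise $A_\ell$-spaces (Theorem \ref{thm_classification_fwAn}), and conditions (2)--(3) are matched by identifying $\psi$ with a section of $\mathscr{A}'_\ell(E,E_kG\times_{\conj}G)$, whose classifying map Proposition \ref{prp_classifying_mapping} computes as $B\mathcal{W}\Theta$ and which the adjunction of Theorem \ref{thm_Tsu16} converts into the $A_k$-equivariant data of Definition \ref{dfn_(k,l)-normal}. The only points the paper spells out that you flag but do not execute are the insertion of the inversion $B_kG\to B_kG^{\op}$ needed to land in $B\mathcal{W}\mathcal{A}_\ell(H,H)_{\eq}^{\op}$ (note your formula $\Theta(f,g)(h)=g\circ h\circ f^{-1}$ should read $g\circ h\circ f$ with $f$ taken in the opposite monoid, the $\rho(g^{-1})$ absorbing the inversion) and the lift of the section to the classifying space of \emph{based} self-maps before taking adjoints; both are routine given your setup.
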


Actually, the condition (2) is immediately implied by (3).
But we dare to write in this way in order to clarify the correspondence of the conditions between this theorem and Definition \ref{dfn_(k,l)-normal}.

Before proving the proof, let us see the following example to illustrate the meaning of the fiberwise $A_\ell$-space $E$ in the theorem.

\begin{ex}
Suppose that $H\subset G$ is a closed normal subgroup.
The fiberwise topological group
\[
E=EG\times_{\conj}H
\]
is constructed by the conjugation action of $G$ on $H$.
Then we have the obvious factorization
\[
EH\times_{\conj}H
\to
E
\to
EG\times_{\conj}G
\]
of the natural inclusion $EH\times_{\conj}H\to EG\times_{\conj}G$.
Let $\phi\colon E|_{BH}\to EH\times_{\conj}H$ be the isomorphism of fiberwise topological groups over $BH$ from $E$ restricted to the subspace $BH\subset BG$ and $\psi\colon E\to EG\times_{\conj}G$ be the inclusion.
This factorization satisfies the conditions in the theorem for $k=\ell=\infty$.
\end{ex}

\begin{proof}[Proof of Theorem \ref{thm_main}]
Suppose the condition (1) in the theorem.
Let $\rho_0'\colon B_kG\to B\mathcal{Q}\mathcal{A}_\ell(H,H)_{\eq}^{\op}$ be the classifying map of $\Prin'(E)^{\op}$ as in Theorem \ref{thm_classification_fwAn}.
By the assumption (1), the composite $\rho_0'\circ B_kf$ is homotopic to $\iota_k\circ B_k(\conj\circ\text{(inversion)})\colon B_kH\to B\mathcal{Q}\mathcal{A}_\ell(H,H)_{\eq}$.
Let $\rho\colon G\to\mathcal{A}_\ell(H,H)_{\eq}^{\op}$ be the composite of the inversion and the $A_k$-map adjoint to $\rho_0'$.
Then the composite $\rho\circ f$ is homotopic to $\conj\colon H\to\mathcal{A}_\ell(H,H)$ as an $A_k$-map.
This is the condition (1) in Definition \ref{dfn_(k,l)-normal}.

Conversely, suppose the condition (1) in Definition \ref{dfn_(k,l)-normal}.
Let $E$ be the pullback of the universal fiberwise $A_n$-space by the composite
\[
B_kG
\xrightarrow{B_k\text{(inversion)}}
B_kG^{\op}
\xrightarrow{B_k\rho}
B_k\mathcal{Q}\mathcal{A}_\ell(H,H)_{\eq}^{\op}
\xrightarrow{\iota_k}
B\mathcal{Q}\mathcal{A}_\ell(H,H)_{\eq}^{\op}.
\]
Then the pullback $(B_kf)^\ast E$ is weakly fiberwise $A_\ell$-equivalent to the fiberwise topological group $E_kH\times_{\conj}H$.
This is the condition (1) in the theorem.

Suppose the condition (1), (2) and (3) in the theorem.
The fiberwise $A_\ell$-map $\psi$ defines a section of the fiberwise mapping space $\mathscr{A}'_{\ell}(E,E_kG\times_{\conj}G)$ over $B_kG$.
By the map $\phi$, we can identify the fiber of $\mathscr{A}_\ell(E,E_kG\times_{\conj}G)$ over the base point with $\mathcal{A}_\ell(H,G)$ of which the basepoint is $f$.
By Proposition \ref{prp_classifying_mapping}, the fiberwise space $\mathscr{A}'_{\ell}(E,E_kG\times_{\conj}G)$ is classified by the composite
\[
B_kG
\xrightarrow{(\rho_0',\iota_k\circ B_k\conj)}
B\mathcal{Q}\mathcal{A}_\ell(H,H)_{\eq}^{\op}\times B\mathcal{Q}\mathcal{A}_\ell(G,G)_{\eq}
\xrightarrow{B\mathcal{Q}\Theta}
B\mathcal{Q}\Map(\mathcal{Q}\mathcal{A}_\ell(H,G),\mathcal{Q}\mathcal{A}_\ell(H,G))_{\eq}.
\]
The section induced from $\psi$ determines the lift
\[
\Psi'\colon B_kG
\to
B\mathcal{Q}\Map_\ast(\mathcal{Q}\mathcal{A}_\ell(H,G),\mathcal{Q}\mathcal{A}_\ell(H,G))_{\eq}
\]
of the previous composite along the canonical map
\[
B\mathcal{Q}\Map_\ast(\mathcal{Q}\mathcal{A}_\ell(H,G),\mathcal{Q}\mathcal{A}_\ell(H,G))_{\eq}
\to
B\mathcal{Q}\Map(\mathcal{Q}\mathcal{A}_\ell(H,G),\mathcal{Q}\mathcal{A}_\ell(H,G))_{\eq}.
\]
Let $\Psi\colon G\to\Map_\ast(\mathcal{Q}\mathcal{A}_\ell(H,G),\mathcal{Q}\mathcal{A}_\ell(H,G))_{\eq}$ be the $A_k$-map adjoint to $\Psi'$.
It defines the obvious $A_k$-equivariant map
\[
\Psi\colon(G,\ast)\to(\Map(\mathcal{Q}\mathcal{A}_\ell(H,G),\mathcal{Q}\mathcal{A}_\ell(H,G))_{\eq},\mathcal{Q}\mathcal{A}_\ell(H,G)),
\]
where the underlying map $\ast\to\mathcal{Q}\mathcal{A}_\ell(H,G)$ is $\ast\mapsto f$.
By the argument so far, the underlying $A_k$-map of $\Psi$ is homotopic to the composite of $A_k$-maps
\[
G
\xrightarrow{\theta}
\Map(\mathcal{A}_\ell(H,G),\mathcal{A}_\ell(H,G))_{\eq}
\xrightarrow{\mathcal{Q}}
\Map(\mathcal{Q}\mathcal{A}_\ell(H,G),\mathcal{Q}\mathcal{A}_\ell(H,G))_{\eq}.
\]
This homotopy determines an $A_k$-equivariant map
\[
(G,\ast)\to(\Map(\mathcal{A}_\ell(H,G),\mathcal{A}_\ell(H,G))_{\eq},\mathcal{A}_\ell(H,G))
\]
with the underlying $A_k$-map $\theta$.
To be precise, it is determined up to homotopy.
Also, the composite
\[
B_kH
\xrightarrow{B_kf}
B_kG
\xrightarrow{\Psi'}
B\mathcal{Q}\Map_\ast(\mathcal{Q}\mathcal{A}_\ell(H,G),\mathcal{Q}\mathcal{A}_\ell(H,G))_{\eq}
\]
is homotopic to the map adjoint to the $A_k$-map (actually a homomorphism)
\[
H
\to
\mathcal{Q}\Map_\ast(\mathcal{Q}\mathcal{A}_\ell(H,G),\mathcal{Q}\mathcal{A}_\ell(H,G))_{\eq}
\]
induced from the conjugation on $H$ and the conjugation through $f$ on $G$ by the assumption (3).
Thus the condition (2) and (3) in Definition \ref{dfn_(k,l)-normal} is verified.

Conversely, suppose the conditions (1), (2) and (3) in Definition \ref{dfn_(k,l)-normal}.
By Proposition \ref{prp_classifying_mapping}, $B_k\theta$ classifies the fiberwise mapping space $\mathscr{A}'_{\ell}(E,E_kG\times_{\conj}G)$.
The bar construction of the $A_k$-equivariant map
\[
(G,\ast)\to(\Map(\mathcal{A}_\ell(H,G),\mathcal{A}_\ell(H,G))_{\eq},\mathcal{A}_\ell(H,G))
\]
defines a map
\[
B_kG=B_k(\ast,G,\ast)
\to
B_k(\ast,\mathcal{Q}\Map(\mathcal{Q}\mathcal{A}_\ell(H,G),\mathcal{Q}\mathcal{A}_\ell(H,G))_{\eq},
\mathcal{Q}\mathcal{A}_\ell(H,G))
\]
of which the composition with the projection onto $B\mathcal{Q}\Map(\mathcal{Q}\mathcal{A}_\ell(H,G),\mathcal{Q}\mathcal{A}_\ell(H,G))_{\eq}$ is homotopic to $B_k\theta$.
Then $\mathscr{A}'_{\ell}(E,E_kG\times_{\conj}G)$ admits a section which restricts to $f$ up to the canonical $A_k$-equivalence $E_\ast\simeq H$.
This verifies the condition (2) in the theorem.
Moreover, since the $A_k$-equivariant map restricts to the trivial one on $H$ as supposed, we can verify the condition (3).
\end{proof}

Let us consider the special case when $k=\ell=1$.
For pointed spaces $A,B$ with nondegenerate basepoints and a topological group $G$, we have the natural split exact sequence
\[
1
\to
[A\wedge B,G]
\xrightarrow{q^\ast}
[A\times B,G]
\xrightarrow{i^\ast}
[A\vee B,G]
\to
1,
\]
where $i\colon A\vee B\to A\times B$ is the inclusion and $q\colon A\times B\to A\wedge B$ is the quotient map. 
The \textit{Samelson product} $\langle f,g\rangle\in[A\wedge B,G]$ of based maps $f\colon A\to G$ and $g\colon B\to G$ is the homotopy class determined by the commutator map
\[
A\times B\to G,
\quad
(a,b)\mapsto f(a)g(b)f(a)^{-1}g(b)^{-1}
\]
and the previous split exact sequence.

\begin{thm}
\label{thm_(1,1)}
Let $f\colon H\to G$ be a homomorphism between topological groups $G,H$, of which the underlying spaces are CW complexes.
The map $f$ is an $N_1(1)$-map if and only if there exists a pointed map $\rho'\colon G\wedge H\to H$ satisfying the following conditions:
\begin{enumerate}
\item
the composite $\rho'\circ(f\wedge\operatorname{id}_H)\colon H\wedge H\to H$ is homotopic to the Samelson product $\langle\operatorname{id}_H,\operatorname{id}_H\rangle$,
\item
the composite $f\circ\rho'\colon G\wedge H\to G$ is homotopic to the Samelson product $\langle\operatorname{id}_G,f\rangle$
\item
the composite of above two homotopies is homotopic to the stationary homotopy on $\langle f,f\rangle$.
\end{enumerate}
\[
\xymatrix{
H\wedge H
	\ar[rr]^-{\langle\operatorname{id}_H,\operatorname{id}_H\rangle}
	\ar[d]_-{f\wedge\operatorname{id}_H}
	& & H \ar[d]^-{f} \\
G\wedge H
	\ar@{.>}[urr]^-{\rho'}
	\ar[rr]_-{\langle\operatorname{id}_G,f\rangle}
	& & G
}
\]
\end{thm}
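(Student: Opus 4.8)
The plan is to deduce Theorem~\ref{thm_(1,1)} from Theorem~\ref{thm_main} by specializing to $k=\ell=1$ and translating the homotopical data there into the Samelson-product language recalled above. First I would set up the dictionary for $k=\ell=1$. For $\ell=1$ there is a natural equivalence $\mathcal{A}_1(H,H)\simeq\Map_\ast(H,H)$, an $A_1$-space is just a pointed space, a fiberwise $A_1$-space is just a fiberwise pointed space, and $A_1$-maps (fiberwise or not) are just pointed maps. For $k=1$ one has $B_1G=\Sigma G$ and $B_1f=\Sigma f$, and the fiberwise groups $E_1G\times_{\conj}G\to\Sigma G$, $E_1H\times_{\conj}H\to\Sigma H$, $E_1H\times_{\conj\circ f}G\to\Sigma H$ are the fiberwise pointed spaces whose clutching functions are $\conj\colon G\to\Map_\ast(G,G)_{\eq}$, $\conj\colon H\to\Map_\ast(H,H)_{\eq}$ and $\conj\circ f\colon H\to\Map_\ast(G,G)_{\eq}$. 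A fiberwise pointed space over $\Sigma G$ with fiber $H$ is classified, up to fiberwise pointed homotopy equivalence, by its clutching function $\rho_0\colon G\to\Map_\ast(H,H)_{\eq}$, and a fiberwise pointed map over $\Sigma G$ between two such bundles is encoded by a pointed map on fibers together with a pointed homotopy intertwining the clutching functions. Since this dictionary is bijective up to homotopy, both directions of Theorem~\ref{thm_(1,1)} will follow at once once the three conditions are matched.

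Next I would match the conditions one at a time, using the split exact sequence $[A\wedge B,G]\xrightarrow{q^\ast}[A\times B,G]\xrightarrow{i^\ast}[A\vee B,G]$ (in particular the injectivity of $q^\ast$) throughout. For condition~(1): the clutching function $\rho_0$ of the bundle $E$ is, up to the inversion of Remark~\ref{rem_thm_classification_fwAn}, the adjoint of the $A_1$-map $\rho$ in Definition~\ref{dfn_(k,l)-normal}; its adjoint $\widetilde{\rho_0}\colon G\times H\to H$ satisfies $\widetilde{\rho_0}(\ast,x)=x$, $\widetilde{\rho_0}(g,\ast)=\ast$, so $(g,x)\mapsto\widetilde{\rho_0}(g,x)\cdot x^{-1}$ kills $G\vee H$ and factors through $q$ as a pointed map $\rho'\colon G\wedge H\to H$, and conversely every $\rho'$ arises this way up to homotopy. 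Condition~(1) of Theorem~\ref{thm_main} says $\rho_0\circ f\simeq\conj$; taking adjoints, multiplying on the right by $x^{-1}$ and using that the commutator $(h,x)\mapsto hxh^{-1}x^{-1}$ represents $q^\ast\langle\id_H,\id_H\rangle$, this becomes $\rho'\circ(f\wedge\id_H)\simeq\langle\id_H,\id_H\rangle$. For condition~(2): the fiberwise map $\psi\colon E\to E_1G\times_{\conj}G$ over $\Sigma G$ exists iff, after reading off clutching data, the pointed map $G\times H\to G$, $(g,x)\mapsto g\,f(\rho(g^{-1})(x))\,g^{-1}$, is pointed-homotopic rel $G\vee H$ to $(g,x)\mapsto f(x)$; expressing the difference of these maps via $q^\ast$, splitting off the commutator with $g$, and using $\rho(g^{-1})(x)\cdot x^{-1}=\rho'(q(\iota(g),x))$ for $\iota$ the inversion of $G$, this difference is identified in $[G\wedge H,G]$ with a combination of $\langle\id_G,f\rangle$ and the class of $f\circ\rho'$, and its vanishing is equivalent to $f\circ\rho'\simeq\langle\id_G,f\rangle$. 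For condition~(3): precomposing the $A_1$-equivariant extension with $f$ and comparing it, through the homotopy of condition~(1), with the canonical trivial extension coming from $\conj(f(h))\circ f\circ\conj(h^{-1})=f$ yields a based self-homotopy of $(h,x)\mapsto f(x)$, i.e.\ an element of $[\Sigma(H\wedge H),G]$; running the homotopies of conditions~(1) and~(2) around the square in the statement produces the same element, and its vanishing is exactly the assertion that the composite homotopy is homotopic to the stationary homotopy at $\langle f,f\rangle$.

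The routine parts are the dictionary and the elementary manipulations with the split exact sequence. The step I expect to be the main obstacle is keeping the bookkeeping honest in conditions~(2) and~(3): one must show that the operations $g\mapsto g^{-1}$ and conjugation by $g$ appearing explicitly in Definition~\ref{dfn_(k,l)-normal} are exactly absorbed by the clutching-function description (up to the $\mathrm{op}$/inversion convention of Remark~\ref{rem_thm_classification_fwAn}) without producing spurious Samelson-product terms, and in condition~(3) that the composite homotopy coming from the fiberwise homotopy over $\Sigma H$ in Theorem~\ref{thm_main} agrees rel endpoints with the one built from the square in the statement, rather than only up to an unspecified self-homotopy of $\langle f,f\rangle$. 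This reduces to a careful but routine naturality check for the classifying-space constructions of Section~\ref{section_fwAn} and Proposition~\ref{prp_classifying_mapping}.
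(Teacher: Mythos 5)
Your proposal is correct in substance, but it takes a different route from the paper's. The paper does not pass through Theorem \ref{thm_main} at all: it works directly with Definition \ref{dfn_(k,l)-normal}, observing that for $k=\ell=1$ an $A_1$-map $\rho\colon G\to\mathcal{A}_1(H,H)$ is just a pointed map $G\to\Map_\ast(H,H)$, setting $\rho(g)(h)=\rho'(g,h)h$, and then matching condition (1) of the theorem with condition (1) of the definition, condition (2) with the adjoint statement that $(g,h)\mapsto f(\rho(g)(h))$ is homotopic to $(g,h)\mapsto gf(h)g^{-1}$ (the existence of the $A_1$-equivariant extension), and condition (3) with the remaining homotopy-of-homotopies clause --- the Samelson-product identifications being exactly your split-exact-sequence manipulation. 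Your detour through Theorem \ref{thm_main}, the clutching-function description of fiberwise pointed spaces over $B_1G=\Sigma G$, and Proposition \ref{prp_classifying_mapping} is legitimate (Theorem \ref{thm_main} is an if-and-only-if), but it forces you to re-derive, in the case $k=\ell=1$, precisely the dictionary between the fiberwise data and Definition \ref{dfn_(k,l)-normal} that the proof of Theorem \ref{thm_main} already establishes, before performing the same commutator-to-Samelson translation; the paper's route skips that layer entirely. What your version buys is an explicit picture of $N_1(1)$-normality as a clutching-function statement over $\Sigma G$, which is conceptually pleasant and makes the $\mathrm{op}$/inversion bookkeeping of Remark \ref{rem_thm_classification_fwAn} visible, at the cost of extra (routine but nontrivial) verifications in your conditions (2) and (3) that the direct argument avoids.
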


\begin{proof}
For given $\rho'\colon G\wedge H\to H$, define
\[
\rho\colon G\to\Map_\ast(H,H),
\quad
\rho(g)(h)=\rho'(g,h)h.
\]
Then the condition (1) is equivalent to the one that $\rho\circ f$ is homotopic to $\conj\colon H\to\Map_\ast (H,H)$.
The latter condition is nothing but the condition (1) in Definition \ref{dfn_(k,l)-normal}.
The condition (2) is equivalent to the one that the map $(g,h)\mapsto f(\rho(g)(h))$ is homotopic to the map $(g,h)\mapsto gf(h)g^{-1}$.
The latter condition is equivalent to the condition (2) in Definition \ref{dfn_(k,l)-normal}.
The equivalence between the condition (3) and the condition (3) in Definition \ref{dfn_(k,l)-normal} follows similarly.
\end{proof}

\begin{rem}
\label{rem_James}
From this result a homomorphism $f\colon H\to G$ is an $N_1(1)$-map if and only if $f$ is homotopy normal in the sense of McCarty \cite{MR176471}.
The condition (2) is exactly the homotopy normality of James \cite{MR233376}.
\end{rem}

\section{$C(k,\ell)$-space}
\label{section_C(k,l)}

A \textit{$C(k,\ell)$-space} is a topological monoid with certain higher homotopy commutativity introduced in 
\cite{MR2678992}.
We do not recall the precise definition but instead quote the following characterization \cite[Theorem 8.3]{MR3491849}, where the equivalence with fourth condition follows from Theorem \ref{thm_classification_fwAn}.

\begin{thm}
\label{thm_C(k,l)_char}
Let $G$ be a topological group, of which the underlying space is a CW complex.
Then the following conditions are equivalent:
\begin{enumerate}
\item
$G$ is a $C(k,\ell)$-space,
\item
the wedge sum of the inclusions $(\iota_k,\iota_\ell)\colon B_kG\vee B_\ell G\to BG$ extends over the product $B_kG\times B_\ell G$,
\item
the homomorphism $\conj\colon\mathcal{A}_\ell(G,G)$ is homotopic to the constant map to the identity as $A_k$-map,
\item
the fiberwise topological group $E_kG\times_{\conj}G$ over $B_kG$ is fiberwise $A_\ell$-equivalent to the trivial fiberwise topological group $B_kG\times G$.
\end{enumerate}
\end{thm}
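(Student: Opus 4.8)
The equivalence of conditions (1), (2), and (3) is precisely \cite[Theorem 8.3]{MR3491849}, so it suffices to prove $(3)\Leftrightarrow(4)$. The plan is to identify the classifying map of the fiberwise topological group $E_kG\times_{\conj}G$, use Theorem \ref{thm_classification_fwAn} to turn fiberwise $A_\ell$-triviality of this fiberwise space into nullhomotopy of that classifying map, and then apply the adjunction of Theorem \ref{thm_Tsu16} to translate this nullhomotopy into the $A_k$-nullhomotopy of $\conj$ demanded by (3).

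First I would verify that $E_kG\times_{\conj}G\to B_kG$ satisfies the hypotheses of Theorem \ref{thm_classification_fwAn}: it is a fiber bundle over the connected CW complex $B_kG$, hence a Hurewicz fibration; the unit section has the homotopy extension property; and its fiber over the basepoint is the topological group $G$, which is assumed to be a CW complex. Its classifying map is, up to homotopy, the composite
\[
B_kG\xrightarrow{B_k(\mathrm{inversion})}B_kG^{\op}\xrightarrow{B_k\conj}B_k\mathcal{W}\mathcal{A}_\ell(G,G)_{\eq}^{\op}\xrightarrow{\iota_k}B\mathcal{W}\mathcal{A}_\ell(G,G)_{\eq}^{\op},
\]
which is exactly the identification carried out in the proof of Theorem \ref{thm_main} in the special case $H=G$, $f=\id_G$, $\rho=\conj$: the Borel construction of the opposite of the conjugation homomorphism is recognized as the pullback of the universal fiberwise $A_\ell$-space along the $k$-th projective space of that homomorphism. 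The opposite-monoid and inversion bookkeeping is forced here by Remark \ref{rem_thm_classification_fwAn}.

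Next, Theorem \ref{thm_classification_fwAn} gives a bijection between fiberwise $A_\ell$-equivalence classes of fiberwise $A_\ell$-spaces over $B_kG$ with fiber $A_\ell$-equivalent to $G$ and homotopy classes of maps $B_kG\to B\mathcal{W}\mathcal{A}_\ell(G,G)_{\eq}^{\op}$, under which the trivial fiberwise group $B_kG\times G$ corresponds to the constant map. Hence $(4)$ is equivalent to the assertion that the composite displayed above is nullhomotopic. Since $B_k(\mathrm{inversion})$ is a homeomorphism, this is equivalent to $\iota_k\circ B_k\conj$ being nullhomotopic, and by the adjunction of Theorem \ref{thm_Tsu16}, applied with the grouplike monoid $\mathcal{W}\mathcal{A}_\ell(G,G)_{\eq}^{\op}$ (whose underlying space is a CW complex, with nondegenerate basepoint), together with the fact that $\mathcal{A}_\ell(G,G)_{\eq}\to\mathcal{W}\mathcal{A}_\ell(G,G)_{\eq}$ induces a weak equivalence on $\mathcal{A}_k(G,-)$, this holds if and only if the $A_k$-map $\conj\colon G\to\mathcal{A}_\ell(G,G)$ is homotopic, as an $A_k$-map, to the constant map to the identity. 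That is condition $(3)$, closing the loop.

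The one step requiring genuine care is the identification of the classifying map of $E_kG\times_{\conj}G$ with the projective space of the conjugation homomorphism, keeping the $\op$/inversion conventions straight so that the source and target of the adjunction in Theorem \ref{thm_Tsu16} line up with the classifying space of Theorem \ref{thm_classification_fwAn}; everything else is a transcription of arguments already made for Theorem \ref{thm_main} and in \cite{MR3491849}.
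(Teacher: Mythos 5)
Your proposal is correct and follows exactly the route the paper intends: it cites \cite[Theorem 8.3]{MR3491849} for the equivalence of (1)--(3) and derives the equivalence with (4) from Theorem \ref{thm_classification_fwAn}, which is all the paper itself says. Your elaboration of the latter step --- identifying the classifying map of $E_kG\times_{\conj}G$ as $\iota_k\circ B_k(\conj\circ\text{(inversion)})$ and translating its nullhomotopy into $A_k$-nullhomotopy of $\conj$ via the adjunction of Theorem \ref{thm_Tsu16} --- correctly fills in the details the paper leaves implicit.
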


Note that, in particular, $G$ is a $C(\infty,\infty)$-space if and only if $BG$ is an $H$-space.

The following is an easy application of the previous theorem.

\begin{thm}
\label{thm_C(k,l)_(k,l)normal}
Let $H$ and $G$ be topological groups of which the underlying spaces are CW complexes and $f\colon H\to G$ be a homomorphism.
If $H$ and $G$ are $C(k,\ell)$-spaces for some $k,\ell\le n$, then $f$ is an $N_k(\ell)$-map.
\end{thm}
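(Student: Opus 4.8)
The plan is to deduce this from Theorem~\ref{thm_main} by feeding in the fiberwise triviality supplied by Theorem~\ref{thm_C(k,l)_char}(4). Since $G$ is a $C(k,\ell)$-space, that theorem gives a fiberwise $A_\ell$-equivalence $\lambda_G\colon B_kG\times G\to E_kG\times_{\conj}G$ over $B_kG$, and likewise a fiberwise $A_\ell$-equivalence $\lambda_H\colon B_kH\times H\to E_kH\times_{\conj}H$ over $B_kH$ for $H$; normalizing (the $C(k,\ell)$-null-homotopies of $\conj$ may be taken stationary at the unit, and grouplike self-equivalences are invertible up to homotopy) we may assume both restrict to the identity on the fiber over the basepoint. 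I would then take $E=B_kG\times H$, the trivial fiberwise topological group over $B_kG$, so that $(B_kf)^\ast E=B_kH\times H$, and set $\psi=\lambda_G\circ(\operatorname{id}_{B_kG}\times f)$ and $\phi=\lambda_H$; here $\operatorname{id}\times f$ is a fiberwise $A_\ell$-map because $f$ is a homomorphism, hence an $A_\ell$-map. Condition~(1) of Theorem~\ref{thm_main} is immediate since $\phi$ is a fiberwise $A_\ell$-equivalence, and condition~(2) holds because $\phi_\ast=\operatorname{id}_H$ and $\psi_\ast=(\lambda_G)_\ast\circ f=f$.

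The substance of the argument is condition~(3): one must show $\bar f\circ\phi\simeq(B_kf)^\ast\psi$ as fiberwise $A_k$-maps over $B_kH$, where $\bar f\colon E_kH\times_{\conj}H\to E_kH\times_{\conj\circ f}G$ is the $f$-induced map and $E_kH\times_{\conj\circ f}G=(B_kf)^\ast(E_kG\times_{\conj}G)$. Since $(B_kf)^\ast\psi=(B_kf)^\ast\lambda_G\circ(\operatorname{id}_{B_kH}\times f)$, this amounts to showing that the fiberwise $A_\ell$-map $\mu=\bigl((B_kf)^\ast\lambda_G\bigr)^{-1}\circ\bar f\circ\lambda_H\colon B_kH\times H\to B_kH\times G$, which restricts to $f$ over the basepoint, is fiberwise $A_k$-homotopic to $\operatorname{id}_{B_kH}\times f$. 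The key structural fact is that $\bar f$ is the fiberwise image of $f$ for bundles twisted by $\conj$ on the $H$-side and by $\conj\circ f$ on the $G$-side, and that $f$ intertwines these twistings via the identity $\conj(f(h))\circ f=f\circ\conj(h)$, valid since $f$ is a homomorphism. Hence one can build $\lambda_H$ from the $C(k,\ell)$-null-homotopy of $\conj\colon H\to\mathcal{A}_\ell(H,H)$ and $\lambda_G$ from that of $\conj\colon G\to\mathcal{A}_\ell(G,G)$ in such a way that, under the intertwining, $\mu$ is carried to the constant map at $f$ up to $A_k$-homotopy. I expect this compatibility between the two chosen null-homotopies---tracked through the bar constructions and Proposition~\ref{prp_classifying_mapping}---to be the only point requiring care; everything else is formal.

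As an alternative (and a sanity check) one can instead verify Definition~\ref{dfn_(k,l)-normal} directly, taking $\rho\colon G\to\mathcal{A}_\ell(H,H)$ to be the constant homomorphism at $\operatorname{id}_H$. Then condition~(1) of that definition is Theorem~\ref{thm_C(k,l)_char}(3) applied to $H$; with this $\rho$ the $A_k$-map $\theta$ becomes $\theta(g)(\alpha)=\conj(g)\circ\alpha$, which factors through $\conj\colon G\to\mathcal{A}_\ell(G,G)$, and the latter is $A_k$-null by Theorem~\ref{thm_C(k,l)_char}(3) applied to $G$, so $\theta$ is $A_k$-null and the equivariant extension demanded in condition~(2) exists by transport along $\theta\simeq\mathrm{const}$. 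Finally, for condition~(3) one exploits that $f$ is a genuine fixed point of the action $h\cdot\alpha=\conj(f(h))\circ\alpha\circ\conj(h^{-1})$, again because $\conj(f(h))\circ f\circ\conj(h^{-1})=f$, so the ``trivial'' equivariant map there is the constant section at that fixed point, which one matches with the restriction along $f$ of the extension from~(2); the only nontrivial input is, once more, that concatenating the $C(k,\ell)$-null-homotopies of $\conj_H$ and $\conj_G$ slot by slot agrees up to homotopy with contracting them simultaneously.
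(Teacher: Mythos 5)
Your proposal follows exactly the paper's route: the paper's entire proof is to take the trivial fiberwise topological group $E=B_kG\times H$, invoke Theorem \ref{thm_C(k,l)_char} to trivialize $E_kH\times_{\conj}H$ and $E_kG\times_{\conj}G$, and assert that the conditions of Theorem \ref{thm_main} can then be verified. The compatibility of the two trivializations that you single out as ``the only point requiring care'' in condition (3) is not addressed in the paper either---its proof is a one-liner---so your write-up reproduces, and in fact makes more explicit, everything the published argument contains.
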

\begin{proof}
By assumption and Theorem \ref{thm_C(k,l)_char}, the fiberwise topological groups $E_kH\times_{\conj}H$ and $E_kG\times_{\conj}G$ are fiberwise $A_\ell$-equivalent to the trivial bundles.
Consider the trivial fiberwise topological group $E=B_kG\times H$.
Then we can verify the conditions in Theorem \ref{thm_main}.
Thus $f$ is an $N_k(\ell)$-map.
\end{proof}

For the homomorphism $H\to\ast$ to the trivial group, we obtain the following.

\begin{thm}
\label{thm_Nkl_to_trivial}
Let $H$ be a topological group of which the underlying space is a CW complex.
The homomorphism $H\to\ast$ to the trivial group is an $N_k(\ell)$-map if and only if $H$ is a $C(k,\ell)$-space.
\end{thm}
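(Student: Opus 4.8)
The plan is to specialize the main theorem, Theorem~\ref{thm_main}, to the homomorphism $f\colon H\to\ast$ and to check that in this degenerate situation the data it requires collapses to condition~(4) of Theorem~\ref{thm_C(k,l)_char}. First I would unwind the ingredients of Theorem~\ref{thm_main} when $G=\ast$. Since $B_k\ast=\ast$, a fiberwise $A_\ell$-space $E$ over $B_kG=\ast$ is just an $A_\ell$-space $E$, and its pullback $(B_kf)^\ast E$ along $B_kf\colon B_kH\to\ast$ is the trivial fiberwise $A_\ell$-space $B_kH\times E$ over $B_kH$. Likewise $E_k\ast\times_{\conj}\ast=\ast$, so the fiberwise $A_\ell$-map $\psi\colon E\to E_k\ast\times_{\conj}\ast$ is the unique map to a point, and the Borel construction $E_kH\times_{\conj\circ f}\ast$ equals $B_kH$ with $E_kH\times_{\conj}H\to E_kH\times_{\conj\circ f}\ast$ the bundle projection. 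Consequently condition~(2) of Theorem~\ref{thm_main} is automatic, its target being a point, and condition~(3) is automatic as well, since both $(B_kf)^\ast\psi$ and the composite of $\phi$ with $E_kH\times_{\conj}H\to B_kH$ are the projection $B_kH\times E\to B_kH$. Thus Theorem~\ref{thm_main} says that $f\colon H\to\ast$ is an $N_k(\ell)$-map if and only if there exist an $A_\ell$-space $E$ and a fiberwise $A_\ell$-map $\phi\colon B_kH\times E\to E_kH\times_{\conj}H$ over $B_kH$ restricting to a weak $A_\ell$-equivalence on each fiber.

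Next I would argue that the existence of such a pair $(E,\phi)$ is equivalent to $E_kH\times_{\conj}H$ being fiberwise $A_\ell$-equivalent to the trivial fiberwise topological group $B_kH\times H$. For the nontrivial direction, given $(E,\phi)$, restricting $\phi$ to the fiber over the basepoint shows that $E$ is weakly $A_\ell$-equivalent to $H$; hence the trivial fiberwise $A_\ell$-spaces $B_kH\times E$ and $B_kH\times H$ are fiberwise $A_\ell$-equivalent (for instance by applying Proposition~\ref{prp_composition_equivalence} fiberwise, or by noting that $B_kH\times E$ is the pullback of the universal fiberwise $A_\ell$-space along the constant map to the basepoint), and the classification Theorem~\ref{thm_classification_fwAn} upgrades the fiberwise weak $A_\ell$-equivalence $\phi$ to a fiberwise $A_\ell$-equivalence. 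The converse direction is immediate on taking $E=H$, as in the proof of Theorem~\ref{thm_C(k,l)_(k,l)normal}. Finally, the equivalence of conditions~(1) and~(4) in Theorem~\ref{thm_C(k,l)_char} identifies ``$E_kH\times_{\conj}H$ fiberwise $A_\ell$-equivalent to $B_kH\times H$'' with ``$H$ is a $C(k,\ell)$-space'', which completes the proof for finite $k$ and $\ell$; the cases $k=\infty$ or $\ell=\infty$ are handled the same way using the $A_\infty$-versions of these results.

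I expect the main obstacle to be the bookkeeping in the first step: correctly identifying $E_k\ast\times_{\conj}\ast$ and $E_kH\times_{\conj\circ f}\ast$ together with the relevant induced maps, and verifying that conditions~(2) and~(3) of Theorem~\ref{thm_main} genuinely degenerate to tautologies here. The second step, turning ``fiberwise $A_\ell$-map that is a fiberwise weak equivalence out of a trivial bundle'' into ``the target is fiberwise $A_\ell$-trivial'', is where the classification theorem does the real work, but it is a routine consequence of the machinery already in place.
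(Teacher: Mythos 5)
Your proposal is correct and follows essentially the same route as the paper: specialize Theorem~\ref{thm_main} to $G=\ast$ using $B_k\ast=\ast$, observe that the conditions collapse to the fiberwise $A_\ell$-triviality of $E_kH\times_{\conj}H$ over $B_kH$, and conclude via condition~(4) of Theorem~\ref{thm_C(k,l)_char}. The paper states this in three lines, leaving implicit the bookkeeping you spell out (that conditions (2) and (3) degenerate and that the classification theorem upgrades the fiberwise weak $A_\ell$-equivalence to a fiberwise $A_\ell$-equivalence), so your additional detail is a faithful elaboration rather than a different argument.
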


\begin{proof}
Note that $B_k\ast=\ast$ for any $k$.
Then by Theorem \ref{thm_main}, the homomorphism $H\to\ast$ is an $N_k(\ell)$-map if and only if the fiberwise topological monoid $E_kH\times_{\conj}H$ is fiberwise $A_\ell$-equivalent to the trivial fiberwise topological group $B_kH\times H$.
Thus the theorem follows from Theorem \ref{thm_C(k,l)_char}.
\end{proof}

\section{$H$-structure on quotient space}
\label{section_quotient}

Since the homotopy quotient of a crossed module is known to be a topological monoid, we expect a similar result for $N_k(\ell)$-maps.
Let us investigate the existence of an $H$-structure on the homotopy quotient.
\begin{prp}
\label{prp_quotient_Ckl}
Let $f\colon H\to G$ be a homomorphism between topological groups of which the underlying spaces are CW complexes.
Suppose $f$ is an $N_k(\ell)$-map.
Then, the topological group $F$ with classifying space $BF\simeq B(\ast,H,G)=EH\times_fG$ is a $C(k,\ell)$-space.
\end{prp}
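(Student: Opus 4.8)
The plan is to recognize the homotopy quotient $EH\times_f G$ as a fiberwise projective space and to exploit the fiberwise $A_\ell$-space $E$ over $B_kG$ supplied by Theorem~\ref{thm_main}. Let $F$ be the topological group (or grouplike monoid) with $BF\simeq B(\ast,H,G)=EH\times_fG$; such an $F$ exists because $EH\times_fG$ sits in the homotopy fiber sequence \eqref{eq_fibseq_H->G}, so it is a loop space with classifying space $EH\times_fG$. By Theorem~\ref{thm_C(k,l)_char}, it suffices to produce a fiberwise $A_\ell$-equivalence of $E_kF\times_{\conj}F$ over $B_kF$ with the trivial fiberwise group $B_kF\times F$; equivalently, by the classification Theorem~\ref{thm_classification_fwAn}, it suffices to show that the conjugation $A_k$-map $\conj\colon F\to\mathcal{A}_\ell(F,F)$ is null as an $A_k$-map, i.e. homotopic to the constant map to $\operatorname{id}_F$.

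First I would identify $B_kF$ up to homotopy. Since $BF\simeq B(\ast,H,G)$ and the bar construction $B(\ast,H,-)$ of the $A_k$-map data is natural, the fiberwise bar construction gives $B_kF\simeq B(\ast,H,B_kG)$ — more precisely, applying $B_k$ in the fiber direction to the $H$-space $G$ and then quotienting by $H$ produces a space fibered over $BH$ with fiber $B_kG$, and by Lemma~\ref{lem_bar_quasifib} the projection $B(\ast,H,B_kG)\to BH$ is a quasifibration with fiber $B_kG$. Next I would transport the fiberwise $A_\ell$-space $E\to B_kG$ along this picture: because $E$ is classified by the composite through $\iota_k\circ B_k\rho$ and $\phi$ exhibits $(B_kf)^\ast E$ as the conjugation bundle $E_kH\times_{\conj}H$, the $H$-action on $B_kG$ (by which $B_kF$ is built) lifts to an $A_\ell$-action on $E$ compatible with $\psi$ into $E_kG\times_{\conj}G$. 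Forming the bar construction over this $H$-action yields a fiberwise $A_\ell$-space $\bar E$ over $B_kF\simeq B(\ast,H,B_kG)$ whose fiber is the $A_\ell$-space underlying $F$ — this is the candidate trivializing the conjugation bundle of $F$.

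The core step is then to check that $\bar E$ is fiberwise $A_\ell$-equivalent to $E_kF\times_{\conj}F$. I would argue this by comparing classifying maps: the map $\psi\colon E\to E_kG\times_{\conj}G$, being a fiberwise $A_\ell$-map over $B_kG$, induces after the $H$-bar construction a fiberwise $A_\ell$-map $\bar E\to B(\ast,H,E_kG\times_{\conj}G)$, and the target, again by Lemma~\ref{lem_bar_quasifib} and the compatibility of conjugation with the $H$-action through $f$, is precisely (a model for) $E_kF\times_{\conj}F$ over $B_kF$; conditions (2) and (3) of Theorem~\ref{thm_main} guarantee that on fibers and on the conjugation structure this map is the required weak $A_\ell$-equivalence. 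Hence $E_kF\times_{\conj}F$ is fiberwise $A_\ell$-trivial and $F$ is a $C(k,\ell)$-space.

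The main obstacle I expect is bookkeeping the $A_k$-coherence of the $H$-action on $E$ and on $\psi$: one must know that the fiberwise $A_\ell$-maps $\phi,\psi$ can be promoted to $H$-equivariant data over $B_kG$ compatibly enough that the bar construction $B(\ast,H,-)$ can be applied to the whole configuration at once, not just to the underlying fibrations. This is exactly the kind of coherence that the category $\mathcal{A}_n^{\mathrm{R}}\times_{\mathcal{A}_n}\mathcal{A}_n^{\mathrm{L}}$ and the associativity in Theorem~\ref{thm_composite_A'} are designed to handle, so I would phrase the $H$-action as a morphism in that category and invoke functoriality of $B_n$; verifying that Theorem~\ref{thm_main}'s hypotheses (1)--(3) translate into morphism conditions there is where the real work lies, and the remaining identifications of $B_kF$ and $E_kF\times_{\conj}F$ are then formal consequences of Lemma~\ref{lem_bar_quasifib} and Theorem~\ref{thm_C(k,l)_char}.
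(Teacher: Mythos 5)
Your reduction of the statement to the fiberwise triviality of $E_kF\times_{\conj}F$ over $B_kF$ (via Theorem \ref{thm_C(k,l)_char}) is the right target, but the route you take to it breaks at the very first identification: $B_kF$ is \emph{not} equivalent to $B(\ast,H,B_kG)$, and the projective-space functor $B_k$ does not commute with the two-sided bar construction $B(\ast,H,-)$ in the way you assert. A quick sanity check: for $f=\id_G$ one has $F\simeq\ast$, hence $B_kF\simeq\ast$, whereas $B(\ast,G,B_kG)$ is (by Lemma \ref{lem_bar_quasifib}) the total space of a quasifibration over $BG$ with fibre $B_kG$, which is not contractible in general; already for $k=\infty$ your formula would give $BF\simeq B(\ast,H,BG)$, a bundle over $BH$ with fibre $BG$, while the correct statement is $BF\simeq B(\ast,H,G)$ with fibre $G$. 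The same fibre mismatch sinks your core step: $B(\ast,H,E_kG\times_{\conj}G)$ has fibres equivalent to $G$, not to $F$, so it cannot serve as a model for $E_kF\times_{\conj}F$. There is also no action of $H$ on $B_kG$ induced by left translation through $f$ (translation is not a monoid map), so the bar construction you want to apply is not even defined without switching to conjugation, which changes the object.

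The missing idea is to stay over $B_kG$ and exploit the null-homotopy of the composite $F\to H\xrightarrow{f}G$, rather than building anything directly over $B_kF$. Concretely, this is what the paper does: replace $f$ by a fibration and set $F=\ker f$ with inclusion $i\colon F\to H$; rigidify $\phi,\psi$ from Theorem \ref{thm_main} to fiberwise $A_\ell$-homomorphisms and form the fiberwise homotopy fibre $E'$ of $\psi\colon E\to E_kG\times_{\conj}G$ over $B_kG$, which inherits a fiberwise $A_\ell$-structure with fibre $F$. The universal property of this homotopy fibre, combined with conditions (1) and (3) of Theorem \ref{thm_main}, yields a fiberwise $A_\ell$-equivalence from $(B_kf)^\ast E'$ to the conjugation bundle $E_kH\times_{H}F$. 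Pulling back once more along $B_ki\colon B_kF\to B_kH$ identifies $E_kF\times_{\conj}F$ with $(B_k(f\circ i))^\ast E'$; since $f\circ i$ is the trivial homomorphism, this pullback is the trivial fiberwise $A_\ell$-space $B_kF\times F$, and Theorem \ref{thm_C(k,l)_char} concludes. No identification of $B_kF$ as a bar construction on $H$ and $G$ is needed anywhere.
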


Note that the topological group $F$ is $A_\infty$-equivalent to the Moore based loop space of $BF$.
This is a key proposition in our study of $H$-structures on $EH\times_fG$.

\begin{proof}
We may suppose that $f$ is a Hurewicz fibration and $F=\ker f$.
Let $i\colon F\to H$ denote the inclusion.
Note that the results in \cite[Sections 3 and 4]{MR3327166} for $A_n$-spaces are similarly verified for fiberwise $A_n$-spaces.
By \cite[Proposition 4.1]{MR3327166}, we may suppose that there exists a fiberwise $A_\ell$-space $E$ over $B_kG$, a fiberwise $A_\ell$-equivalence $\phi\colon(B_kf)^\ast E\to E_kH\times_{\conj}H$ and a fiberwise $A_\ell$-homomorphism $\psi\colon E\to E_kG\times_{\conj}G$ as in Theorem \ref{thm_main}.
Let $E'$ be the fiberwise homotopy fiber of $\psi$ over $B_kG$.
Then by \cite[Theorem 3.1]{MR3327166} (the pullback of $A_n$-homomorphisms), $E'$ admits the canonical structure of a fiberwise $A_\ell$-space.
Applying \cite[Theorem 3.3]{MR3327166} (the universal property of the pullback of $A_n$-homomorphisms), a fiberwise $A_\ell$-map $\chi\colon(B_kf)^\ast E'\to E_kH\times_{H}F$ is induced as in the following diagram:
\[
\xymatrix{
(B_kf)^\ast E' \ar[r] \ar@{.>}[d]_-{\chi}
	& (B_kf)^\ast E \ar[r]^-{\psi_{B_kH}} \ar[d]^-{\phi}
	& E_kH\times_{\conj\circ f}G \ar@{=}[d] \\
E_kH\times_{H}F \ar[r]
	& E_kH\times_{\conj}H \ar[r]
	& E_kH\times_{\conj\circ f}G
}
\]
We can see that $\chi$ is a fiberwise $A_\ell$-equivalence.
Pulling buck along the map $B_ki\colon B_kF\to B_kH$, we obtain the fiberwise $A_\ell$-equivalence
\[
(B_ki)^\ast\chi
\colon
(B_k(i\circ f))^\ast E'
\xrightarrow{\simeq}
E_kF\times_{\conj}F.
\]
Since $i\circ f$ is the constant homomorphism, $E_kF\times_HF$ is fiberwise $A_\ell$-equivalent to the trivial fiberwise topological group $B_kF\times F$.
Thus, it follows from Theorem \ref{thm_C(k,l)_char} that $F$ is a $C(k,\ell)$-space.
\end{proof}

\begin{thm}
\label{thm_quotient_Hstr}
Let $f\colon H\to G$ be a homomorphism between topological groups of which the underlying spaces are CW complexes and $k\ge1$.
Suppose that $f$ is an $N_k(k)$-map and the LS category of the homotopy quotient $EH\times_fG$ is estimated as $\cat(EH\times_fG)\le k$.
Then $EH\times_fG$ is an $H$-space.
\end{thm}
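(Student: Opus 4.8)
The plan is to reduce immediately to a statement about the classifying space $BF$ of the topological group $F$ with $BF\simeq EH\times_fG$. Since $f$ is an $N_k(k)$-map, Proposition \ref{prp_quotient_Ckl} gives that $F$ is a $C(k,k)$-space, so by Theorem \ref{thm_C(k,l)_char}(2) (with $\ell=k$) the wedge of inclusions $(\iota_k,\iota_k)\colon B_kF\vee B_kF\to BF$ extends, up to homotopy, to a map $\mu_k\colon B_kF\times B_kF\to BF$. Thus $BF$ carries a multiplication after restricting the source to $B_kF$; it remains to transport this to $BF$ itself, and this is where the hypothesis $\cat(EH\times_fG)=\cat(BF)\le k$ is used.

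The key step is to produce a pointed homotopy section $\sigma\colon BF\to B_kF$ of $\iota_k$ out of the category bound. I would argue via the Schwarz genus. The universal principal $F$-bundle $p\colon EF\to BF$ has contractible total space, so its sectional category equals $\cat(BF)$, hence is $\le k$; by Schwarz's characterization this is equivalent to the existence of a section of the $(k+1)$-fold fiberwise join $p^{\ast(k+1)}\colon EF\ast_{BF}\cdots\ast_{BF}EF\to BF$. This fiberwise join is the associated bundle $EF\times_F(F\ast\cdots\ast F)=EF\times_F E_kF$ (Milnor's join model of $E_kF$, the join of $k+1$ copies of $F$), and a section of it is exactly an $F$-equivariant map $\sigma_0\colon EF\to E_kF$. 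Composing $\sigma_0$ with the $F$-equivariant inclusion $E_kF\hookrightarrow EF$ yields an $F$-self-map of $EF$, which is $F$-homotopic to $\id_{EF}$ by the universality of the free contractible $F$-space $EF$; passing to $F$-quotients gives $\sigma\colon BF\to B_kF$ with $\iota_k\circ\sigma\simeq\id_{BF}$, and after a homotopy we may take $\sigma$ pointed.

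Finally I would set $\mu=\mu_k\circ(\sigma\times\sigma)\colon BF\times BF\to BF$. On $BF\vee\ast$ this equals $\mu_k\circ(\sigma\vee\ast)$, which is homotopic to $\iota_k\circ\sigma\simeq\id_{BF}$ because $\mu_k$ restricts up to homotopy to $\iota_k$ on the first wedge summand and $\sigma$ is a homotopy section; symmetrically on $\ast\vee BF$. Hence $\mu$ has a two-sided homotopy unit, so $BF$, and therefore $EH\times_fG\simeq BF$, is an $H$-space.

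I expect the main obstacle to be the middle paragraph: making precise the passage $\cat(BF)\le k\Rightarrow$ homotopy section of $\iota_k$. The ingredients are classical (Schwarz's genus together with Milnor's join model), but one must check carefully that the $(k+1)$-fold fiberwise join really is the bundle $EF\times_F E_kF$, that the associated-bundle/equivariant-map dictionary is applied with the correct handedness of the $F$-action, and that Milnor's $E_kF$ agrees up to $F$-equivalence with the bar-construction model $B_k(\ast,F,F)$ used here, so that the resulting $\sigma$ lands in the space $B_kF$ appearing in Theorem \ref{thm_C(k,l)_char}; Lemma \ref{lem_bar_quasifib} and naturality of the bar construction should handle this comparison. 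The rest of the argument is formal.
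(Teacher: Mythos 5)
Your proof is correct and follows essentially the same route as the paper: Proposition \ref{prp_quotient_Ckl} gives that $F$ is a $C(k,k)$-space, the bound $\cat BF\le k$ yields a homotopy section of $\iota_k\colon B_kF\to BF$, and Theorem \ref{thm_C(k,l)_char} then produces the $H$-structure. The only difference is that the paper cites the literature for the existence of that section, whereas you spell out the standard Schwarz-genus/Milnor-join argument; that argument is sound.
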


\begin{proof}
Take $F$ as in the proof of Proposition \ref{prp_quotient_Ckl} so that the homotopy equivalence $BF\simeq EH\times_fG$ and the estimate $\cat BF\le k$ hold.
Then, as in \cite{MR1642747}, the inclusion $B_kF\to BF$ admits a homotopy section $s\colon BF\to B_kF$.
Since $F$ is a $C(k,k)$-space, it follows from the existence of $s$ and Theorem \ref{thm_C(k,l)_char} that $BF$ is an $H$-space.
\end{proof}

\begin{rem}
As a consequence of Theorem \ref{thm_Nkl_to_trivial}, one cannot expect that $N_k(\ell)$-map implies any higher homotopy associativity of the homotopy quotient $H$-space.
In order to guarantee higher homotopy associativity, we need even higher homotopy normality.
\end{rem}

\begin{rem}
Although Theorem \ref{thm_quotient_Hstr} seems theoretically important, we do not have its good application at this point.
For example, we shall give a sufficient condition in Section \ref{section_Lie} below that the inclusion $\SU(m)\to\SU(n)$ is $p$-locally an $N_k(\ell)$-map.
But it will not give a new result on the existence of an $H$-structure since the quotient $\SU(n)/\SU(m)$ is $p$-locally homotopy equivalent to a product of odd dimensional spheres under the assumption there.
\end{rem}

We have only shown the existence of an $H$-structure on the homotopy quotient in the previous theorem.
We do not try to answer the following natural question in the present work.

\begin{problem}
Suppose a homomorphism $f\colon H\to G$ is an $N_k(k)$-map and $\cat(EH\times_fG)\le k$.
Construct a canonical $H$-structure on $EH\times_fG$.
If it can be done, is the natural map $G\to EH\times_fG$ an $H$-map?
\end{problem}

\section{Fiberwise projective spaces}
\label{section_projective}

Let us recall the  fiberwise projective spaces of topological monoids.

\begin{dfn}
The \textit{fiberwise $n$-th projective space} $\mathscr{B}_nE$ of a fiberwise topological monoid $E\to B$ is defined to be the quotient
\[
\mathscr{B}_nE
=
\left(\coprod_{0\le i\le n}\Delta^i\times E^i\middle)\right/{\sim}
\]
by the usual simplicial relation, where $E^i$ denotes the $i$-fold fiber product.
In particular, $\mathscr{B}E=\mathscr{B}_\infty E$ is called the \textit{fiberwise classifying space}.
Let $\iota_n\colon\mathscr{B}_nE\to\mathscr{B}E$ denote the canonical inclusion.
As in Sections \ref{section_monoid}, the fiberwise projective space induce the fiberwise map
\[
\mathscr{B}_n\colon\mathscr{A}_n(E,E')\to\map_B^B(\mathscr{B}_nE,\mathscr{B}_nE')
\]
to the fiberwise mapping space of fiberwise pointed maps for fiberwise topological monoids $E$ and $E'$.
\end{dfn}

The fiberwise $n$-th projective space for a fiberwise $A_n$-space is similarly constructed as in \cite{MR2665235}.

\begin{dfn}
The \textit{fiberwise $n$-th projective space} $\mathscr{B}^{\mathcal{K}}_nE$ of a fiberwise $A_n$-space $E\to B$ is the quotient
\[
\mathscr{B}^{\mathcal{K}}_nE
=
\left(\coprod_{0\le i\le n}\mathcal{K}_{i+2}\times E^i\middle)\right/{\sim}
\]
constructed as in \cite[Construction 8]{MR0158400}.
\end{dfn}

The following is a variant of Theorem \ref{thm_Tsu16} in the category of fiberwise spaces.

\begin{thm}
\label{thm_Tsu16_fw}
Let $E$ and $E'$ be fiberwise topological monoids over a CW complex $B$ and suppose that their fiberwise basepoints of $E$ and $E'$ have the fiberwise homotopy extension property, their projections are Hurewicz fibrations and $E'$ is grouplike on each fiber.
Then, the composite of fiberwise maps
\[
\mathscr{A}_n(E,E')
\xrightarrow{\mathscr{B}_n}
\map_B^B(\mathscr{B}_nE,\mathscr{B}_nE')
\xrightarrow{(\iota_n)_\sharp}
\map_B^B(\mathscr{B}_nE,\mathscr{B}E')
\]
is a weak homotopy equivalence in the category of spaces.
\end{thm}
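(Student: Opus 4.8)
The plan is to deduce Theorem~\ref{thm_Tsu16_fw} from the non-fiberwise statement Theorem~\ref{thm_Tsu16} by a fiberwise-over-$B$ argument, passing from fibers to total spaces via the classification machinery of Section~\ref{section_fwAn}. First I would recall that, for fiberwise topological monoids $E,E'$ over $B$ whose projections are Hurewicz fibrations and whose sections have the fiberwise homotopy extension property, all three fiberwise spaces in the statement---$\mathscr{A}_n(E,E')$, $\map_B^B(\mathscr{B}_nE,\mathscr{B}_nE')$ and $\map_B^B(\mathscr{B}_nE,\mathscr{B}E')$---are fiberwise spaces over $B$ with Hurewicz-fibration projections; this uses that $\map_B(-,-)$ preserves Hurewicz fibrations (as already invoked in Section~\ref{section_fwAn} for $\map_B$) and that the fiberwise bar constructions $\mathscr{B}_nE$, $\mathscr{B}E$ are again fiberwise spaces with the requisite cofibrancy of sections, obtained by applying the functor $\mathscr{B}_n$ fiberwise. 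The two maps in the composite are fiberwise pointed maps over $B$, compatible with the projections by construction.

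Next I would identify the fibers. Over a point $b\in B$, the three fiberwise spaces restrict to $\mathcal{A}_n(E_b,E'_b)$, $\Map_\ast(B_nE_b,B_nE'_b)$ and $\Map_\ast(B_nE_b,BE'_b)$ respectively, because $\mathscr{B}_n$ and $\mathscr{B}$ restrict to the ordinary $B_n$ and $B$ on fibers, and fiberwise mapping spaces restrict to ordinary mapping spaces on fibers. Here $E_b$ is a topological monoid with CW underlying space (one may arrange this via the CW-replacement functor $\mathcal{W}$, as in Theorem~\ref{thm_classification_fwAn}, since $B$ is a CW complex and the fibers are of the homotopy type of topological monoids) and $E'_b$ is grouplike with nondegenerate basepoint; thus Theorem~\ref{thm_Tsu16} applies fiberwise and says precisely that the restriction of our composite to each fiber over $b$ is a weak homotopy equivalence.

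Finally I would invoke the fiberwise Whitehead theorem: a fiberwise map between fiberwise spaces over a CW base $B$, both of whose projections are Hurewicz fibrations, which is a weak homotopy equivalence on each fiber, is a weak homotopy equivalence of total spaces. This follows from the five lemma applied to the long exact sequences of homotopy groups of the two fibrations, comparing fiber, total space and base (the base being the identity on $B$), together with a Mayer--Vietoris / inductive-over-skeleta argument; since $B$ is connected it suffices to know the fiberwise map is a weak equivalence over a single point, but we in fact have it over every point. Applying this to the composite $\mathscr{A}_n(E,E')\to\map_B^B(\mathscr{B}_nE,\mathscr{B}E')$ gives the claim. The main obstacle is the bookkeeping of point-set hypotheses---ensuring that the fiberwise mapping spaces and fiberwise bar constructions really are Hurewicz fibrations with well-behaved sections, so that both the fiberwise identification of fibers and the fiberwise Whitehead theorem are legitimate; this is where the standing assumptions (Hurewicz fibrations, fiberwise HEP of sections, CW base) and the local sliceability needed for $\map_B$ to exist must all be checked, but each point is routine given the analogous non-fiberwise facts already recorded in Sections~\ref{section_monoid} and~\ref{section_fwAn}.
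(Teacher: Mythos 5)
Your proposal is correct and follows essentially the same route as the paper's own (much terser) proof: observe that the relevant projections are Hurewicz fibrations, identify the fibers with the non-fiberwise mapping spaces so that Theorem~\ref{thm_Tsu16} applies over each point, and conclude by comparing the long exact sequences of the two fibrations. Your extra attention to the point-set hypotheses (CW/grouplike fibers, nondegenerate basepoints) is a reasonable elaboration of what the paper leaves implicit.
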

\begin{proof}
In this setting, the projections $\mathscr{A}_n(E,E')\to B$ and $\map_B^B(\mathscr{B}_nE,\mathscr{B}E')\to B$ are Hurewicz fibrations.
Thus the given composite is a weak homotopy equivalence since it restricts to a weak homotopy equivalence on each fiber by Theorem \ref{thm_Tsu16}. 
\end{proof}

Although the following result is well-known, we give a proof here.

\begin{prp}
Let $G$ be a topological group.
Then, the fiberwise classifying space $EG\times_GBG$ is homeomorphic to $BG\times BG$ through a fiberwise pointed map over $BG$, where the fiberwise basepoint of $BG\times BG$ is given by the diagonal map $BG\to BG\times BG$.
\end{prp}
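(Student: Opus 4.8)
The statement to prove is that for a topological group $G$, the fiberwise classifying space $\mathscr{B}(EG\times_GG)$ of the fiberwise topological group $EG\times_G G\to BG$ (with $G$ acting on itself by conjugation) is, as a fiberwise space over $BG$, homeomorphic to the trivial bundle $BG\times BG$, with fiberwise basepoint the diagonal. (Here I read ``$EG\times_G BG$'' in the statement as the fiberwise classifying space of the conjugation bundle $E G\times_{\conj} G$; the key point is that $\mathscr{B}$ of the conjugation action bundle trivializes.) The approach is to compute both sides from the simplicial bar construction and exhibit an explicit simplicial homeomorphism level by level, rather than invoking any homotopy-theoretic classification.

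\textbf{First step.} I would unwind the definition of $\mathscr{B}(EG\times_{\conj}G)$ using the bar construction $B(\ast,G,\ast)$ model from Section \ref{section_monoid}: writing $EG=B(\ast,G,G)$ with its right $G$-action, the conjugation bundle is $EG\times_G G$ where the quotient is by $(e,x)\cdot g=(eg,g^{-1}xg)$, and $\mathscr{B}_n(EG\times_{\conj}G)$ is the quotient of $\coprod_i \Delta^i\times EG\times_G G\times G^i$. Concretely a simplex is represented by a tuple $[a\mid g\mid x_1,\dots,x_i]$ where $a\in EG$, $g\in G$ (the conjugation-bundle fiber coordinate), $x_1,\dots,x_i\in G$, modulo the diagonal $G$-action and the simplicial face/degeneracy relations that multiply the $x_j$ together and into $g$ via the conjugation action. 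I would then recognize the whole simplicial object as isomorphic to $B(\ast,G,\ast)\times B(\ast,G,\ast)$ computed using the two obvious collapses of the conjugation action: the map to the base $BG=B(\ast,G,\ast)$ and a second map recording the ``total'' bar simplex built from $g,x_1,\dots,x_i$.

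\textbf{The crucial construction.} The heart of the argument is the classical trick that a conjugation action is trivialized by a shear: the map $G\times G\to G\times G$, $(g,x)\mapsto (g,gxg^{-1})$ — more relevantly $(g,x)\mapsto(g, xg)$ or some such — converts the conjugation action into a product action, because the diagonal left action of $G$ on $G\times G$ by $(g\cdot)$ on the first factor and conjugation on the second is intertwined with the action on $G\times G$ given by left translation on the first factor and trivial on the second. I would implement this at the level of the bar construction: define a simplicial homeomorphism $\mathscr{B}_n(EG\times_{\conj}G)\to B_nG\times B_nG$ sending the class of $[g_0,g_1,\dots,g_n\mid g\mid x_1,\dots,x_i]$ (writing a point of $EG$ as a bar simplex) to the pair consisting of its image under the projection $\mathscr{B}_n(EG\times_{\conj}G)\to B_nG$ and the bar simplex assembled from the reorganized data $g, x_1,\dots,x_i$ and the $g_j$'s after applying the shear. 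One checks this is compatible with all faces and degeneracies (this is where the conjugation relations are exactly absorbed by the shear), is fiberwise over $B_nG$ by construction, is a homeomorphism with explicit inverse given by the inverse shear, passes to the colimit $n\to\infty$, and carries the fiberwise basepoint section (the image of the identity of the fiber group) to the diagonal $BG\to BG\times BG$.

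\textbf{Main obstacle.} The only real work is bookkeeping: verifying that the shear map is genuinely simplicial, i.e. that it commutes with the face maps that fold $x_j x_{j+1}$ together and with the boundary face that pushes $x_i$ into the $EG$-coordinate via the conjugation action. I expect this to reduce, after tracking the simplicial identities, to the single identity that conjugation is an action (so that iterated conjugations compose correctly) together with the group axioms; no nontrivial topology intervenes since everything is given by continuous group operations and their inverses, so continuity of the map and its inverse is automatic. A secondary, purely notational, point is choosing the bar-construction model of $EG$ and $BG$ consistently with Section \ref{section_monoid} so that the fiberwise basepoint really is the diagonal; once the combinatorics of the shear are set up this is immediate.
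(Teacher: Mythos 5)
Your proposal is correct and takes essentially the same route as the paper: the paper's proof is precisely an explicit levelwise simplicial homeomorphism $\{G^i\times G^i\to G^i\times G^i\}_i$ given by the shear $(g_1,\ldots,g_i;x_1,\ldots,x_i)\mapsto(g_1,\ldots,g_i;(g_1\cdots g_i)x_1(g_2\cdots g_i)^{-1},\ldots,g_ix_i)$, which absorbs the conjugation face relations exactly as you predict. The only difference is that the paper writes down the shear formula explicitly while you leave its precise form ("$(g,x)\mapsto(g,xg)$ or some such") to be determined by the bookkeeping.
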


\begin{proof}
The homeomorphism $EG\times_GBG\to BG\times BG$ is induced from the geometric realization of the simplicial map $\{G^i\times G^i\to G^i\times G^i\}_i$ given by the homeomorphisms
\begin{align*}
&(g_1,g_2,\ldots,g_i;x_1,x_2,\ldots,x_i)\\
&\mapsto
(g_1,g_2,\ldots,g_i;
	(g_1\cdots g_i)x_1(g_2\cdots g_i)^{-1},
	(g_2\cdots g_i)x_2(g_3\cdots g_i)^{-1},
	\ldots,
	g_ix_i),
\end{align*}
where $EG\times_GBG$ and $BG\times BG$ are cosidered to be the geometric realizations of appropriate simplicial spaces $\{G^i\times G^i\}_i$ with different simplicial structures.
\end{proof}

The following theorem provides a criterion for non-$N_k(\ell)$-map.

\begin{thm}
\label{thm_Nkl_fwproj}
Let $f\colon H\to G$ be a homomorphism between topological groups of which the underlying spaces are CW complexes and $k,\ell\ge1$.
Then, if $f$ is an $N_k(\ell)$-map, then there exist fiberwise pointed maps
\[
E_kH\times_HB_{\ell}H
\xrightarrow{\Phi}
\mathcal{E}
\xrightarrow{\Psi}
EG\times_GBG
\]
for some fiberwise pointed space $\mathcal{E}$ over $B_kG$, where $\Phi$ is precisely a fiberwise pointed map as a map $E_kH\times_HB_{\ell}H\to(B_kf)^\ast\mathcal{E}$, satisfying the following conditions:
\begin{enumerate}
\item
$\Phi$ restricts to a weak homotopy equivalence on each fiber,
\item
the restriction of the composite $\Psi\circ\Phi$ to the fiber over the basepoint is homotopic to $\iota_\ell\circ B_\ell f\colon B_\ell H\to BG$,
\item
the composite $((B_kf)^\ast\Psi)\circ\Phi\colon E_kH\times_HB_\ell H\to E_kH\times_HBG$ of $\Phi$ and the pullback of $\Psi$ by the map $B_kf\colon B_kH\to B_kG$ is homotopic to the map induced from $f$ as a fiberwise pointed map.
\end{enumerate}
\end{thm}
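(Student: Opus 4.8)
The plan is to obtain $\Phi$, $\Psi$ and $\mathcal{E}$ by applying the fiberwise $\ell$-th projective space functor $\mathscr{B}_\ell$ (and the fiberwise classifying space $\mathscr{B}$) to the data furnished by Theorem \ref{thm_main}. Since $f$ is an $N_k(\ell)$-map, Theorem \ref{thm_main}---after the fiberwise rectification of \cite[Proposition 4.1]{MR3327166}, exactly as in the proof of Proposition \ref{prp_quotient_Ckl}---provides a fiberwise topological group $E$ over $B_kG$, a fiberwise $A_\ell$-equivalence $\phi\colon(B_kf)^\ast E\to E_kH\times_{\conj}H$ over $B_kH$, and a fiberwise $A_\ell$-homomorphism $\psi\colon E\to E_kG\times_{\conj}G$ over $B_kG$, satisfying the conditions (1)--(3) there. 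I take $\mathcal{E}=\mathscr{B}_\ell E$, a fiberwise pointed space over $B_kG$ with section the $0$-skeleton inclusion, whose projection is a Hurewicz fibration.

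I then define the two maps using the canonical identification of the fiberwise $\ell$-th projective space of a fiberwise group built from a group action with the one built from the induced action; concretely $\mathscr{B}_\ell(E_kG\times_{\conj}G)=E_kG\times_G B_\ell G$, $\mathscr{B}(E_kG\times_{\conj}G)=E_kG\times_G BG$, and likewise $\mathscr{B}_\ell(E_kH\times_{\conj}H)=E_kH\times_H B_\ell H$ and $\mathscr{B}(E_kH\times_{\conj\circ f}G)=E_kH\times_H BG$. Composing $\mathscr{B}_\ell\psi$ with $\iota_\ell$ and the base-change map over $\iota_k$, set
\[
\Psi\colon\mathcal{E}=\mathscr{B}_\ell E\xrightarrow{\mathscr{B}_\ell\psi}E_kG\times_G B_\ell G\xrightarrow{\iota_\ell}E_kG\times_G BG\longrightarrow EG\times_G BG.
\]
For $\Phi$, observe that $\mathscr{B}_\ell\phi$ is a fiberwise pointed map $(B_kf)^\ast\mathcal{E}\to E_kH\times_H B_\ell H$ over $B_kH$ which restricts on each fiber to $\mathscr{B}_\ell$ of a weak $A_\ell$-equivalence of grouplike monoids, hence to a weak homotopy equivalence; being a fiberwise weak equivalence between Hurewicz fibrations over the CW complex $B_kH$, it is a fiberwise homotopy equivalence, and I let $\Phi$ be a fiberwise pointed homotopy inverse. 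Condition (1) then holds by construction.

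For condition (2): restricting to the fiber over the basepoint, $\Psi_\ast\circ\Phi_\ast\simeq\iota_\ell\circ\mathscr{B}_\ell\psi_\ast\circ(\mathscr{B}_\ell\phi_\ast)^{-1}$, and since $f\circ\phi_\ast\simeq\psi_\ast$ as $A_\ell$-maps by Theorem \ref{thm_main}(2), functoriality of $\mathscr{B}_\ell$ gives $\mathscr{B}_\ell\psi_\ast\simeq B_\ell f\circ\mathscr{B}_\ell\phi_\ast$, whence $\Psi_\ast\circ\Phi_\ast\simeq\iota_\ell\circ B_\ell f$. For condition (3): by Theorem \ref{thm_main}(3) the composite of $\phi$ with the fiberwise homomorphism $E_kH\times_{\conj}H\to E_kH\times_{\conj\circ f}G$ induced by $f$ is homotopic to $(B_kf)^\ast\psi$; applying $\mathscr{B}_\ell$ and then $\iota_\ell$, and using $\mathscr{B}_\ell\phi\circ\Phi\simeq\operatorname{id}$, the composite $(B_kf)^\ast\Psi\circ\Phi$ becomes fiberwise homotopic to the composite of the map $E_kH\times_H B_\ell H\to E_kH\times_H B_\ell G$ induced by $B_\ell f$ with $\iota_\ell\colon E_kH\times_H B_\ell G\to E_kH\times_H BG$, i.e.\ to the map induced by $f$. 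The part I expect to need the most care is setting up these identifications of fiberwise classifying and projective spaces with the Borel-type constructions $E_k(-)\times_{(-)}B_\ell(-)$ naturally enough to transport the homotopies of Theorem \ref{thm_main} (keeping track of which $H$- or $G$-action appears in each), together with the fact that $\mathscr{B}_\ell$ carries fiberwise weak $A_\ell$-equivalences of grouplike fiberwise monoids to fiberwise weak equivalences (the grouplike case of Theorem \ref{thm_Tsu16}, via the quasifibrations of Lemma \ref{lem_bar_quasifib}); everything else is formal.
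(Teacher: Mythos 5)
Your proposal follows essentially the same route as the paper: feed the (rectified) data of Theorem \ref{thm_main} into a fiberwise projective space functor, identify the projective spaces of the conjugation bundles with the Borel-type constructions $E_k(-)\times_{(-)}B_\ell(-)$, and transport the three homotopies. There is, however, one concrete misstep. You assert that after the rectification of \cite[Proposition 4.1]{MR3327166} one may take $E$ to be a fiberwise \emph{topological group}, and you then form $\mathcal{E}=\mathscr{B}_\ell E$ with the simplicial bar construction. That rectification only straightens the $A_\ell$-maps $\phi$ and $\psi$ into (zigzags of) fiberwise $A_\ell$-homomorphisms; $E$ itself remains a fiberwise $A_\ell$-space, and for finite $\ell$ a fiberwise $A_\ell$-space classified by a map into $B\mathcal{W}\mathcal{A}_\ell(H,H)_{\eq}^{\op}$ cannot in general be replaced by a fiberwise topological monoid (the structure "group" acts only through $A_\ell$-maps, not homomorphisms). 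Consequently the simplicial $\mathscr{B}_\ell E$ is not defined on $E$; one must instead use the associahedra-based Stasheff projective space for fiberwise $A_\ell$-spaces, which is exactly what the paper does with $\mathcal{E}=\mathscr{B}^{\mathcal{K}}_\ell E$. With that substitution your argument goes through and matches the paper's, including the step you flag as delicate: the comparison of $\mathscr{B}^{\mathcal{K}}_\ell(E_kH\times_{\conj}H)$ and $\mathscr{B}^{\mathcal{K}}_\infty(E_kH\times_{\conj\circ f}G)$ with $E_kH\times_HB_\ell H$ and $E_kH\times_HBG$ is not a literal identification but a canonical fiberwise pointed homotopy equivalence, which the paper obtains from the classification of fiberwise principal bundles together with the fiberwise Ganea pullback--pushout characterization of both constructions.
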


As in Theorem \ref{thm_main}, the condition (2) which immediately follows from (3) is contained.

\begin{rem}
The fiberwise pointed space $\mathcal{E}$ is obtained as the fiberwise projective space of the fiberwise $A_n$-space in Theorem \ref{thm_main}.
So the converse of this theorem should hold as well.
But, to verify it, we need to generalize Theorem \ref{thm_Tsu16_fw} for fiberwise $A_n$-maps from fiberwise $A_n$-spaces in a way compatible with the composition of fiberwise $A_n$-maps between fiberwise topological monoids.
We leave this problem for now.
\end{rem}

\begin{proof}
Suppose $f\colon H\to G$ is an $N_k(\ell)$-map.
Then we have a fiberwise $A_\ell$-space $E$ over $B_kG$ and fiberwise $A_\ell$-maps $\phi\colon E_kH\times_{\conj}H\to(B_kf)^\ast E$ and $\psi\colon E\to E_kG\times_{\conj}G$ as in Theorem \ref{thm_main}.
Again by \cite[Proposition 4.1]{MR3327166}, we may suppose that $\phi$ and $\psi$ are fiberwise $A_\ell$-homomorphisms.
Moreover, we may suppose the composite of $\phi$ and the fiberwise homomorphism $E_kH\times_{\conj}H\to E_kH\times_{\conj\circ f}G$ is homotopic to $\psi$ through fiberwise $A_\ell$-homomorphisms by this argument. 
Let $\mathcal{E}=\mathscr{B}^{\mathcal{K}}_\ell E$.
Note that a fiberwise $A_\ell$-homomorphism induces a fiberwise pointed map between the fiberwise projective spaces in the obvious manner.
Then we have fiberwise $A_\ell$-maps $\Phi'\colon\mathscr{B}^{\mathcal{K}}_\ell(E_kH\times_{\conj}H)\to(B_kf)^\ast\mathcal{E}$, which is the homotopy inverse of the induced map of $\phi$, $\Psi'\colon\mathcal{E}\to\mathscr{B}^{\mathcal{K}}_\infty(EG\times_{\conj}G)$, which is the induced map of $\psi$, and $F'\colon\mathscr{B}^{\mathcal{K}}_\ell(E_kH\times_{\conj}H)\to\mathscr{B}^{\mathcal{K}}_\infty(E_kH\times_{\conj\circ f}G)$, which is the induced map of $f$.
We also note that there exists a canonical homotopy commutative diagram of fiberwise pointed spaces
\[
\xymatrix{
\mathscr{B}^{\mathcal{K}}_\ell(E_kH\times_{\conj}H) \ar[r]^-{F'} \ar[d]_-{\simeq}
& \mathscr{B}^{\mathcal{K}}_\infty(E_kH\times_{\conj\circ f}G) \ar[d]^-{\simeq} \\
E_kH\times_HB_\ell H \ar[r]_-{F}
& E_kH\times_HBG
}
\]
where $F$ is the induced map of $f$ and the vertical maps are fiberwise pointed homotopy equivalences.
This follows from the classification theorem of fiberwise principal bundles and the fact that both $\mathscr{B}^{\mathcal{K}}_\ell(E_kH\times_{\conj}H)$ and $E_kH\times_HB_\ell H$ are characterized by the fiberwise variant of Ganea's pullback-pushuout construction.
Composing the above fiberwise pointed homotopy equivalences, we obtain the fiberwise pointed maps $\Phi$ and $\Psi$ satisfying the desired conditions.
\end{proof}

\section{$p$-local normality of Lie groups}
\label{section_Lie}

By the result of James \cite{MR233376} and Remark \ref{rem_James}, the inclusion $\SU(m)\to\SU(n)$ ($2\le m<n$) is not ($2$-locally) an $N_1(1)$-map.
But one might expect some normality when the spaces are localized at a large prime $p$.
In the rest of this section, we assume $k\ge1$ and $\ell\ge1$ and often omit the $p$-localization symbol like $G=G_{(p)}$.

As is well-known, any subgroup of an abelian group is normal.
But the analogue of this fact for a homomorphism between infinite loop spaces does not hold as in the following example.
\begin{ex}
Let $f\colon H\to G$ be a homomorphism such that the delooping is homotopic to the map between the Eilenberg--MacLane spaces $g\colon K(\mathbb{Q},2n)\to K(\mathbb{Q},4n)$ classified by the square $u^2\in H^{4n}(K(\mathbb{Q},2n);\mathbb{Q})$ of the generator $u\in H^{2n}(K(\mathbb{Q},2n);\mathbb{Q})$.
The homotopy fiber of $g$ is homotopy equivalent to the rationalized $2n$-sphere $S^{2n}_{(0)}$, which is not an $H$-space.
Then, since we have $\cat S^{2n}_{(0)}=1$, $f$ is not an $N_1(1)$-map by Theorem \ref{thm_quotient_Hstr}.
Similarly, the inclusion $\SO(2n)_{(0)}\to\SO(2n+1)_{(0)}$ is not an $N_1(1)$-map for $n\ge1$.
\end{ex}
But we have a normality result for some class of homomorphisms as follows.
\begin{thm}
\label{thm_normal_SU(2)}
Let $f\colon H\to G$ be a homomorphism between compact connected semisimple Lie groups. 
Let $2m-1$ and $2n-1$ denote the largest degrees of generators of the rational cohomology algebra $H^\ast(H;\mathbb{Q})$ and $H^\ast(G;\mathbb{Q})$, respectively.
Suppose that $f^\ast \colon H^\ast(G;\mathbb{Q})\to H^\ast(H;\mathbb{Q})$ is surjective and $p\ge kn+\ell m$.
Then the homomorphism $f$ is $p$-locally an $N_k(\ell)$-map.
\end{thm}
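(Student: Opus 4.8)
The plan is to verify the criterion of Theorem~\ref{thm_main} $p$-locally, by constructing the fiberwise $A_\ell$-space $E$ over $B_kG$ together with the fiberwise $A_\ell$-maps $\phi$ and $\psi$ through an obstruction-theoretic argument. Note first that $(B_kf)^\ast(E_kG\times_{\conj}G)=E_kH\times_{\conj\circ f}G$ and that $f$ itself induces a fiberwise map $E_kH\times_{\conj}H\to E_kH\times_{\conj\circ f}G$ over $B_kH$ which on each fiber is the inclusion $f\colon H\to G$, so that the data required by Theorem~\ref{thm_main} amounts to extending this subgroup-bundle picture along $B_kf$ to all of $B_kG$. I would carry this out in two steps. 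First, extend the classifying map $B_kH\to B\mathcal{W}\mathcal{A}_\ell(H,H)_{\eq}^{\op}$ of $E_kH\times_{\conj}H$ along $B_kf$ to a map on $B_kG$; by Theorem~\ref{thm_classification_fwAn} this produces $E$ and a fiberwise $A_\ell$-equivalence $\phi$, which is condition~(1). Second, using Proposition~\ref{prp_classifying_mapping}, lift the resulting structure to a section of the fiberwise mapping space $\mathscr{A}'_{\ell}(E,E_kG\times_{\conj}G)$ over $B_kG$ restricting over the basepoint to the class of $f\circ\phi_\ast$; this produces $\psi$ and condition~(2), and condition~(3) is then a further homotopy of the same nature. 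In both steps the successive obstructions lie in groups $H^{q+1}(B_kG,B_kH;\pi_q(-))$ whose coefficients are built from the homotopy groups of $\mathcal{A}_\ell(H,H)_{\eq}\simeq\Map_\ast(B_\ell H,BH)_{\eq}$ (Theorem~\ref{thm_Tsu16}) and, for the second step, from those of the homotopy fiber of $f_\ast\colon\mathcal{A}_\ell(H,H)\to\mathcal{A}_\ell(H,G)$, which is $\Map_\ast(B_\ell H,Q)$ where $Q$ denotes the homotopy fiber of $Bf\colon BH\to BG$.

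The structural input comes from the two hypotheses. Since $f^\ast$ is surjective on rational cohomology, transgression shows that $(Bf)^\ast\colon H^\ast(BG;\mathbb{Q})\to H^\ast(BH;\mathbb{Q})$ is a surjection of polynomial algebras, so its kernel is generated by a regular sequence and the Eilenberg--Moore spectral sequence gives $H^\ast(Q;\mathbb{Q})=\Lambda(\xi_1,\dots,\xi_t)$, an exterior algebra on odd-degree classes of degree at most $2n-1$; thus $Q$ is rationally a product of odd spheres of dimension at most $2n-1$. Because $p\ge kn+\ell m$ exceeds all the degrees that will occur below, $H$, $G$ and $Q$ are $p$-regular, and, more to the point, the homotopy groups of $H$, $G$, $Q$ and of the function spaces $\Map_\ast(B_\ell H,BH)$ and $\Map_\ast(B_\ell H,Q)$ are, in the range relevant to the obstruction groups above, finitely generated and $p$-torsion free, the first $p$-torsion appearing only above roughly twice the top generator degree.

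The heart of the matter is the vanishing of the obstruction groups $p$-locally. Rationally this is exactly the statement that $f$ is rationally homotopy normal: $\Omega BH$ is rationally an abelian loop space and $Q$ is rationally a product of odd spheres, hence a rational loop space, so the conjugation and compatibility obstructions of Definition~\ref{dfn_(k,l)-normal} --- which, as Theorem~\ref{thm_(1,1)} makes explicit when $k=\ell=1$, and its higher analogues in general, are classes of relative-Samelson-product type --- vanish rationally. For the $p$-torsion part one argues that every obstruction class is likewise of Samelson-product type, hence bilinear and natural in $f$, and is therefore detected on the generators of the relevant Hopf algebras: the ``$G$-directions'' contribute degree at most $k$ times the top generator degree $2n-1$, the ``$B_\ell H$-directions'' degree at most $\ell$ times $2m-1$, so each obstruction lands in a homotopy group of $Q$ (or of $H$, $G$) of total degree at most $2kn+2\ell m-1<2p$, below the $p$-torsion threshold; hence it vanishes. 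Equivalently, the single primary obstruction is a $p$-local higher relative Samelson product built from $k$ copies of $\id_G$ and from $\iota_\ell\circ B_\ell f\colon B_\ell H\to BG$, which after adjunction lives in $[B_kG\wedge B_\ell H,Q]$ in a range where this group is trivial once $p\ge kn+\ell m$. With all obstructions vanishing, the extension and lift of the two steps, and the homotopy of condition~(3), all exist, so $f$ is $p$-locally an $N_k(\ell)$-map.

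The main obstacle is precisely the claim of the previous paragraph, that no data of total degree exceeding $2kn+2\ell m$ can contribute. This is delicate because $H^\ast(B_kG;\mathbb{Q})$ and $H^\ast(B_\ell H;\mathbb{Q})$ are themselves supported in much higher degrees --- up to roughly $k\dim G$ and $\ell\dim H$ --- so a crude dimension count does not suffice; one has to exploit genuinely that the obstruction cocycles are bilinear in the ``$G$'' and ``$H$'' variables and natural in $f$, so that they are determined by their restrictions to the wedges of cells arising from the generators of $H^\ast(G)$ and $H^\ast(H)$. Establishing this bilinearity-plus-degree estimate is a $p$-local unstable computation of the kind carried out in the works cited above on higher homotopy commutativity of Lie groups (there for the case $f=\id$, i.e.\ for $C(k,\ell)$-spaces), here adapted to a general homomorphism $f$ with $f^\ast$ rationally surjective and to the fiberwise setting over $B_kG$; granted it, the bootstrapping through the two obstruction towers is routine.
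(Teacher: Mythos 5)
Your proposal correctly identifies the target (verifying the criterion of Theorem~\ref{thm_main}) and the right structural inputs ($p$-regularity, and the fact that rational surjectivity of $f^\ast$ makes $G/H$ a product of odd spheres), but it has a genuine gap exactly where you flag it yourself: the claim that every obstruction class is ``of Samelson-product type, hence bilinear and natural in $f$'' and therefore detected on generators in total degree at most $2kn+2\ell m$. This is asserted, not proved, and your two-step obstruction tower (first extending the classifying map of $E_kH\times_{\conj}H$ along $B_kf$, then lifting to a section of $\mathscr{A}'_{\ell}(E,E_kG\times_{\conj}G)$) cannot close without it, since the relative groups $H^{q+1}(B_kG,B_kH;-)$ are supported far above $2kn+2\ell m$ and the coefficient systems $\pi_q\Map_\ast(B_\ell H,BH)$ are not controlled by a naive degree count. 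A proof must supply a concrete mechanism implementing the ``detection on generators,'' and that is the actual content of the argument.

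The paper's proof supplies this mechanism differently, and more simply, in two ways you do not anticipate. First, it takes $E$ to be the \emph{trivial} fiberwise topological group $B_kG\times H$ (so the resulting action $G\to\mathcal{A}_\ell(H,H)$ is null-homotopic); under $p\ge kn+\ell m$ there is nothing nontrivial to extend, because $H$ and $G$ are $A_k$-equivalent to products of odd spheres and $G\simeq H\times G/H$. Second, instead of arguing abstractly about bilinearity of obstruction cocycles, it factors the relevant maps through the explicit low-dimensional models $B_k(i_1,\ldots,i_t)=\bigcup B_{k_1}S^{2i_1-1}\times\cdots\times B_{k_t}S^{2i_t-1}$, constructs retractions $\alpha'\colon B_k(m_1,\ldots,m_s)\to BH$ and $\gamma'\colon B_k(n_1,\ldots,n_r)\to BG$, and produces the required commutative diagram of fiberwise pointed maps using only the elementary vanishing $\pi_{2i-1}(BH)=\pi_{2i-1}(BG)=0$ for $i\le 2kn+2\ell m$; the conditions of Theorem~\ref{thm_main} then follow by adjunction via Theorem~\ref{thm_Tsu16_fw}. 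In short, the cellular structure of the sphere models does the work that your proposal delegates to an unestablished bilinearity-plus-degree estimate; without some substitute for that construction, your argument does not go through.
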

\begin{proof}
We verify the conditions in Theorem \ref{thm_main} for the trivial fiberwise topological group $E=B_kG\times H$.
We employ the technique to reduce the projective spaces in \cite[Sections 3 and 4]{MR3852292}.
Since we assume $p\ge kn+\ell m$, we have the $A_k$-equivalences
\[
H\simeq S^{2m_1-1}\times\cdots\times S^{2m_s-1}
\quad\text{and}\quad
G\simeq S^{2n_1-1}\times\cdots\times S^{2n_r-1}
\]
for some sequences $2\le m_1\le\cdots\le m_s\le m$ and $2\le n_1\le\cdots\le n_r\le n$.
Then we have the homotopy equivalences
\[
G\simeq H\times G/H
\quad\text{and}\quad
G/H\simeq S^{2q_1-1}\times\cdots\times S^{2q_{r-s}-1}
\]
for some sequence $2\le q_1\le\cdots\le q_{r-s}\le n$ since $f^\ast \colon H^\ast(G;\mathbb{Q})\to H^\ast(H;\mathbb{Q})$ is surjective.
Note that the sequences $m_1,\ldots,m_s$ and $q_1,\ldots,q_{r-s}$ are subsequences of the sequence $n_1,\ldots,n_r$.
We can consider these equivalences are $A_k$-equivalences with an appropriate $A_k$-form on $G/H$.
Consider the space
\[
B_k(i_1,\ldots,i_t)=
\bigcup_{k_1+\cdots+k_t=k}B_{k_1}S^{2i_1-1}\times\cdots\times B_{k_t}S^{2i_t-1}.
\]
Then we have the homotopy commutative diagram
\begin{align*}
\xymatrix{
B_kH \ar@{=}[r] \ar[d]_-{B_kf}
& B_kH \ar[d]^-{\text{inclusion}} \ar[r]^-\alpha
& B_k(m_1,\ldots,m_s) \ar[d]^-{\text{inclusion}}\\
B_kG \ar[r]
& \displaystyle{\bigcup_{k_1+k_2=k}B_{k_1}H\times B_{k_2}(G/H)} \ar[r]_-{\alpha\times\beta}
& B_k(n_1,\ldots,n_r)
}
\end{align*}
for maps
\[
\alpha\colon B_kH\to B_k(m_1,\ldots,m_s)
\quad\text{and}\quad
\beta\colon B_k(G/H)\to B_k(q_1,\ldots,q_{r-s})
\]
inducing injective homomorphisms of mod $p$-cohomologies, where the homotopy commutativity of the left square follows from the construction of the left bottom arrow in \cite[Proposition 3.2]{MR3852292}.
Let $\gamma\colon B_kG\to B_k(n_1,\ldots,n_r)$ denote the composite of the bottom arrows.
We also have the maps
\[
\alpha'\colon B_k(m_1,\ldots,m_s)\to BH
\quad\text{and}\quad
\gamma'\colon B_k(n_1,\ldots,n_r)\to BG
\]
such that the compositions $\alpha'\circ\alpha$ and $\gamma'\circ\gamma$ are homotopic to the inclusions.
Note that the same argument works for the $\ell$-th projective spaces.
Consider the fiberwise pointed spaces
\begin{align*}
&B_k(m_1,\ldots,m_s)
	\to
	B_k(m_1,\ldots,m_s)\times B_\ell(m_1,\ldots,m_s)
	\to
	B_k(m_1,\ldots,m_s)
	\quad\text{and}\\
&B_k(n_1,\ldots,n_r)
\to
B_k(n_1,\ldots,n_r)\times B_\ell(m_1,\ldots,m_s)
\to
B_k(n_1,\ldots,n_r)
\end{align*}
with the first projections and the first inclusions.
Thus we have the homotopy commutative diagram of fiberwise pointed spaces
\begin{align}
\label{diagram_proj_to_reduced}
\xymatrix{
B_kH\times B_\ell H \ar[r]^-{\alpha\times\alpha} \ar[d]_-{B_kf\times\operatorname{id}}
& B_k(m_1,\ldots,m_s)\times B_\ell(m_1,\ldots,m_s) \ar[d]^{\text{inclusion}} \\
B_kG\times B_\ell H \ar[r]_-{\gamma\times\alpha}
& B_k(n_1,\ldots,n_s)\times B_\ell(m_1,\ldots,m_s)
}
\end{align}
Since we suppose $p\ge kn+\ell m$, we have
\[
\pi_{2i-1}(BH)=\pi_{2i-1}(BG)=0
\]
for $i\le 2kn+2\ell m$.
This implies that there exists a commutative diagram of fiberwise pointed maps
\begin{align}
\label{diagram_reduced_to_classifying}
\xymatrix{
B_k(m_1,\ldots,m_s)\times B_\ell(m_1,\ldots,m_s) \ar[r] \ar[d]_-{\text{inclusion}}
& EH\times_HBH \ar[d]^-{\text{induced map of $f$}} \\
B_k(n_1,\ldots,n_s)\times B_\ell(m_1,\ldots,m_s) \ar[r]
& EG\times_GBG
}
\end{align}
Here the top and bottom arrows cover $\alpha'$ and $\gamma'$, respectively.
Moreover, their restrictions to the fiber over the basepoint are $\alpha'$ and the composite of the maps
\[
B_\ell(m_1,\ldots,m_s)
\xrightarrow{\text{inclusion}}
B_\ell(n_1,\ldots,n_s)
\xrightarrow{\gamma'}
BG,
\]
respectively.
Composing the squares (\ref{diagram_proj_to_reduced}) and (\ref{diagram_reduced_to_classifying}), we get the homotopy commutative square of fiberwise pointed maps
\[
\xymatrix{
B_kH\times B_\ell H \ar[rr]^-{\text{inclusion}} \ar[d]_-{B_kf\times\operatorname{id}}
& & EH\times_HBH \ar[d]^-{\text{induced map of $f$}} \\
B_kG\times B_\ell H \ar[rr]
& & EG\times_GBG
}
\]
where the bottom arrow covers the inclusion $B_kG\to BG$.
Thus taking the adjoint to this diagram in the sense of Theorem \ref{thm_Tsu16_fw}, we obtain the homotopy commutative diagram of fiberwise $A_\ell$-maps between fiberwise topological groups
\[
\xymatrix{
E_kH\times_{\conj}H \ar[rr]^-{\text{inclusion}} \ar[d]_-{\phi'}
& & EH\times_{\conj}H \ar[d]^-{\text{induced map of $f$}} \\
E \ar[rr]_-\psi
& & EG\times_{\conj}G
}
\]
where each arrow covers the same map as the corresponding arrow in the previous diagram.
The map $\phi'$ can be inverted over $B_kG$ to be a fiberwise $A_\ell$-equivalence $\phi\colon(B_kf)^\ast E\to E_kH\times_{\conj}H$.
Now we can verify the conditions in Theorem \ref{thm_main}.
Therefore, the homomorphism $f$ is $p$-locally an $N_k(\ell)$-map.
\end{proof}

\begin{rem}
In the proof of the theorem, the action $G\to\mathcal{A}_\ell(H,H)$ of the resulting $N_k(\ell)$-map is null-homotopic since $E$ is trivial.
Let us propose to call such an $N_k(\ell)$-map a \textit{$Z_k(\ell)$-map}, where $Z$ comes from the German word \textit{zentral}.
Although it seems stronger condition than being an $N_k(\ell)$-map, we have no example at this point for an $N_k(\ell)$-map but not a $Z_k(\ell)$-map between Lie groups.
\end{rem}

Let us investigate the non-normality of the inclusion $\SU(m)\to\SU(n)$.
\begin{thm}
\label{thm_non-normal_SU}
Suppose $\max\{kn-m,(k-1)n+2\}<p\le kn+(\ell-1)m$ for some $2\le m<n$ and $k,\ell\ge1$, then the inclusion $\SU(m)\to\SU(n)$ is not $p$-locally an $N_k(\ell)$-map.
\end{thm}

Before proving the theorem, we recall the cohomology of projective spaces.
\begin{lem}
\label{lem_cohproj}
If the $p$-local cohomology of a compact connected Lie group $G$ is given by
\[
H^\ast(G;\mathbb{Z}_{(p)})
=\Lambda_{\mathbb{Z}_{(p)}}(x_1,\ldots,x_n),
\]
then
\[
H^\ast(B_kG;\mathbb{Z}_{(p)})
=
\mathbb{Z}_{(p)}[y_1,\ldots,y_n]/(y_1,\ldots,y_n)^{k+1}\oplus S_k,
\]
where $y_i$ is the image of the transgression of $x_i$ in $H^\ast(BG;\mathbb{Z}_{(p)})$ and $S_k$ is a free $\mathbb{Z}_{(p)}$-module and mapped to $0$ in $H^\ast(B_{k-1}G;\mathbb{Z}_{(p)})$. 
Moreover, when the $p$-local cohomology of a compact connected Lie group $H$ is given by
\[
H^\ast(H;\mathbb{Z}_{(p)})
=\Lambda_{\mathbb{Z}_{(p)}}(x'_1,\ldots,x'_m)
\]
and a homomorphism $f\colon H\to G$ induces a surjective map on $p$-local cohomology, then the induced map
\[
(B_kf)^\ast\colon H^\ast(B_kG;\mathbb{Z}_{(p)})\to H^\ast(B_kH;\mathbb{Z}_{(p)})
\]
is also surjective.
\end{lem}
\begin{proof}
As in \cite{MR1309142,MR760190}, consider the spectral sequence converging to $H^\ast(BG;\mathbb{Z}_{(p)})$ induced from the filtration
\[
\ast=B_0G\subset B_1G\subset\cdots\subset B_kG\subset\cdots\subset BG.
\]
The differential in the $E_1$-page coincides with the differential of the cobar construction of the exterior algebra $H^\ast(G;\mathbb{Z}_{(p)})$ and then, as is well-known, the $E_2$-page is the polynomial algebra $\mathbb{Z}_{(p)}[y_1,\ldots,y_n]$.
This implies that the spectral sequence collapses at the $E_2$-page.
Truncate this spectral sequence to compute $H^\ast(B_kG;\mathbb{Z}_{(p)})$, of which the $E_1$-page looks like
\[
0\to E_1^{0,\ast}\xrightarrow{d_1}E_1^{1,\ast}\xrightarrow{d_1}\cdots\xrightarrow{d_1}E_1^{k,\ast}\to0.
\]
When $i<k$, the $E_2^{i,\ast}$-term of this spectral sequence is a free module of which the basis is given by monomials of length $i$.
Comparing with the spectral sequence for $H^\ast(BG;\mathbb{Z}_{(p)})$, the $E_2^{k,\ast}$-term is the direct sum of a similar module generated by monomials of length $k$ and the submodule $S_k$ which is isomorphic to a submodule in the $E_1^{k+1,\ast}$-term of the spectral sequence for $H^\ast(BG;\mathbb{Z}_{(p)})$.
This implies that $S_k$ is free.
Again comparing with the spectral sequence for $H^\ast(BG;\mathbb{Z}_{(p)})$, the one for $H^\ast(B_kG;\mathbb{Z}_{(p)})$ also collapses at the $E_2$-page.
Then we obtain $H^\ast(B_kG;\mathbb{Z}_{(p)})$ as in the lemma.
For the latter half, note that the homomorphism $f^\ast\colon H^\ast(G;\mathbb{Z}_{(p)})\to H^\ast(H;\mathbb{Z}_{(p)})$ is a map of coalgebra admitting a section.
Thus the induced map on the $E_2$-page is surjective, completing the proof.
\end{proof}

The $p$-local cohomology of $B\SU(n)$ is given by 
\[
H^\ast(B\SU(n);\mathbb{Z}_{(p)})
=
\mathbb{Z}_{(p)}[c_2,\ldots,c_n],
\]
where $c_i$ denotes the $i$-th Chern class of degree $2i$.
We assume that the inclusion $\SU(m)\to\SU(n)$ is $p$-locally an $N_k(\ell)$-map and derive a contradiction.
As in Theorem \ref{thm_Nkl_fwproj}, we have a fiberwise projective space $\mathcal{E}=\mathscr{B}_\ell E$ for some fiberwise $A_\ell$-space $E$ over $B_k\SU(n)$ with fiber $A_\ell$-equivalent to $\SU(m)$ and fiberwise pointed maps $\Phi\colon E_k\SU(m)\times_{\SU(m)}B_\ell\SU(m)\to\mathcal{E}_{B_k\SU(m)}$ and $\Psi\colon\mathcal{E}\to E_k\SU(n)\times_{\SU(n)}B\SU(m)$ satisfying the properties in Theorem \ref{thm_Nkl_fwproj}.

\begin{lem}
\label{lem_submodule_H(B_lE)}
The $p$-local cohomology of $\mathcal{E}$ is given by
\begin{align*}
&H^\ast(\mathcal{E};\mathbb{Z}_{(p)})\\
&=(\mathbb{Z}_{(p)}[c_2^B,\ldots,c_n^B]/(c_2^B,\ldots,c_n^B)^{k+1}\oplus S_k^B)
	\otimes(\mathbb{Z}_{(p)}\{(c_2^F)^{i_2}\cdots(c_m^F)^{i_m}\mid 0\le i_2+\cdots+i_m\le\ell\}\oplus S_\ell^F)
\end{align*}
as a $H^\ast(B_k\SU(n);\mathbb{Z}_{(p)})=\mathbb{Z}_{(p)}[c_2^B,\ldots,c_n^B]/(c_2^B,\ldots,c_n^B)^{k+1}\oplus S_k^B$-module, where we write $c_j^B=\Psi^\ast(c_j\times1)$, $c_j^F=\Psi^\ast(1\times c_j)$, $S_k^B$ and $S_\ell^F$ are free modules and $\mathbb{Z}_{(p)}S$ with a set $S$ denotes the free $\mathbb{Z}_{(p)}$-module with basis $S$.
\end{lem}

\begin{proof}
Since the rationalization $\SU(n)_{(0)}$ is $A_\infty$-equivalent to a topological abelian group, the rational cohomology Serre spectral sequence of $E_k\SU(n)\times_{\SU(n)}B_\ell\SU(n)\to B_k\SU(n)$ collapses at the $E_2$-page.
Then the $p$-local cohomology spectral sequence $\tilde{E}_\ast$ also collapses since its $E_2$-page is free by Lemma \ref{lem_cohproj}.
Consider the $p$-local cohomology Serre spectral sequence $E_\ast$ of $\mathcal{E}\to B_k\SU(n)$.
The $E_2$-term is given as
\[
E_2
=
(\mathbb{Z}_{(p)}[c_2^B,\ldots,c_n^B]/(c_2^B,\ldots,c_n^B)^{k+1}\oplus S_k^B)
\otimes
(\mathbb{Z}_{(p)}[c_2^F,\ldots,c_\ell^F]/(c_2^F,\ldots,c_m^F)^{\ell+1}\oplus S_\ell^F)
\]
for some free modules $S_k^B$ and $S_\ell^F$.
Again by Lemma \ref{lem_cohproj}, the induced map $\Psi^\ast\colon \tilde{E}_2\to E_2$ is surjective.
This implies that $E_\ast$ also collapses.
Thus the lemma follows.
\end{proof}

By the inequalities
\begin{align*}
n+p-1
&\ge
n+((k-1)n+3)-1
=kn+2,\\
(m+1)+p-1
&\le
m+1+(kn+(\ell-1)m)-1
=kn+\ell m,
\end{align*}
we can find integers $0\le\ell'\le\ell$, $m+1\le i\le n$ and $2\le j<m$ or $j=0$ satisfying
\[
i+p-1=kn+\ell'm+j
\quad\text{and}\quad
\ell'm+j\le\ell m.
\]
By the assumption $kn-m<p$ and this equality, one can see the inequalities
\begin{align*}
k<\frac{p}{3}+1
\quad\text{and}\quad
\ell'\le\frac{p}{2}.
\end{align*}
From now on we shall work in the mod $p$ cohomology, where we use the same symbols as in the $p$-local cohomology.
Since the map $\Psi\colon\mathcal{E}\to E\SU(n)\times_{\SU(n)}B\SU(n)\cong B\SU(n)\times B\SU(n)$ is fiberwise pointed and the composite $\Psi\circ\Phi\colon E\SU(m)\times_{\SU(m)}B\SU(m)\to E\SU(n)\times_{\SU(n)}B\SU(n)$ is homotopic to the induced map of the homomorphism $f\colon\SU(m)\to\SU(n)$, we have
\begin{align}
\label{align_F(1xc)}
\Psi^\ast(1\times c_i)=c_i^B+(\text{a polynomial}),
\end{align}
where the polynomial consists of terms divisible by at least one of $c_{m+1}^B,\ldots,c_{i-1}^B$.
Let us compute $\mathcal{P}^1(\Psi^\ast(1\times c_i))$ the image of Steenrod operation $\mathcal{P}^1$.

\begin{lem}
The coefficient of $c_jc_m^{\ell'}c_n^k$ ($c_0=1$ when $j=0$) in $\mathcal{P}^1c_i\in H^{2i+2p-2}(B\SU(n);\mathbb{F}_p)$ is nonzero.
\end{lem}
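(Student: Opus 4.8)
The plan is to reduce the statement to a combinatorial identity about power sums and elementary symmetric functions coming from the splitting principle. Since the monomial $c_jc_m^{\ell'}c_n^k$ involves no $c_1$ and no $c_t$ with $t>n$, its coefficient in $\mathcal P^1c_i$ may be computed in the universal ring $H^*(BU)=\mathbb F_p[c_1,c_2,\dots]$. Writing $c=\prod_\alpha(1+x_\alpha)$ with $\deg x_\alpha=2$, one has $\mathcal P^1x_\alpha=x_\alpha^{p}$ and $\mathcal P^{\ge 2}x_\alpha=0$, so the Cartan formula gives $\mathcal P^1c_i=\sum_\alpha x_\alpha^{p}\,e_{i-1}(x_1,\dots,\widehat{x_\alpha},\dots)$, and a short generating-function manipulation rewrites this as
\[
\mathcal P^1c_i=\sum_{s=0}^{i-1}(-1)^{s}\,c_{i-1-s}\,p_{p+s},\qquad c_0=1,
\]
where $p_r=\sum_\alpha x_\alpha^{r}$ is the $r$-th power sum, re-expressed in the $c_t$ by Newton's identities. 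The main combinatorial input is the classical closed form: the coefficient of $\prod_t e_t^{m_t}$ in $p_r$, with $r=\sum_t t\,m_t$, equals $(-1)^{\,r-|m|}\,r\,(|m|-1)!/\prod_t m_t!$, where $|m|=\sum_t m_t$.

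Next I would single out the terms of the sum that can contribute to $c_jc_m^{\ell'}c_n^k$. Since $c_{i-1-s}$ has index $i-1-s\le i-1\le n-1<n$, the factor $c_n^{k}$ must be produced entirely by $p_{p+s}$, and $c_{i-1-s}$ must divide the target monomial; hence only $i-1-s\in\{0,j,m\}$ occurs, i.e.\ at most three terms (two when $j=0$). The configuration $\ell'=j=0$ cannot arise, for then $i+p-1=kn$ together with $i\ge m+1$ would force $p\le kn-m$, against $p>kn-m$. Applying the closed form to the monomial $c_jc_m^{\ell'}c_n^k/c_{i-1-s}$ in each of these (at most three) power sums and summing, the degree relation $i+p-1=kn+\ell'm+j$ produces repeated cancellations, and the total coefficient simplifies, up to an irrelevant sign, to
\[
\binom{k+\ell'-1}{\ell'}\cdot\bigl((k-1)n+\ell'm+j\bigr)
\]
when $j\ge 2$ and $\ell'\ge1$; in the degenerate cases ($j=0$, or $\ell'=0$) one gets the same factor $(k-1)n+\ell'm+j$ times a ratio of factorials whose arguments are all strictly smaller than $p$.

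Finally I would check both factors are prime to $p$. From the already-established bounds $k\le p/3$ and $\ell'\le p/2$ one has $k+\ell'-1<p$, so $\binom{k+\ell'-1}{\ell'}$ (and likewise the factorial ratio appearing in the degenerate cases) is a unit mod $p$. The other factor satisfies $(k-1)n+\ell'm+j=i+p-1-n$, so $0<(k-1)n+\ell'm+j\le p-1$, using $m+1\le i\le n$ for the bounds and the exclusion of $\ell'=j=0$ for strict positivity; hence it too is a unit mod $p$. Therefore the coefficient is nonzero.

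The main obstacle is the middle step: correctly enumerating the at most three contributing terms, handling the degenerate cases $\ell'=0$ and $j=0$ (and small values of $k,\ell'$) on the same footing, and carrying out the sign- and factorial-bookkeeping so that the contributions really do collapse to the product above. Since the intermediate expressions pass through rationals with denominators, I would phrase the final conclusion in terms of $p$-adic valuations, which are governed precisely by the two inequalities $k+\ell'-1<p$ and $(k-1)n+\ell'm+j<p$.
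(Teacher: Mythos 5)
Your proposal is correct but takes a genuinely different route from the paper. The paper simply quotes the mod $p$ Wu formula, which gives the coefficient of an arbitrary monomial $c_2^{i_2}\cdots c_n^{i_n}$ in $\mathcal{P}^1c_i$ in closed form; substituting $(i_j,i_m,i_n)=(1,\ell',k)$ yields $\pm\frac{(k+\ell'-1)!}{(k-1)!\,\ell'!}\bigl((k-1)n+\ell'm+j\bigr)$ in a single step (with the evident modification when $j=0$), and the proof reduces at once to the two unit checks that you also perform. You instead rederive the needed coefficient from the splitting principle and the expansion of power sums in elementary symmetric functions, which forces you to sum the at most three contributions with $i-1-s\in\{0,j,m\}$ and verify a cancellation. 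I checked that the collapse you assert is genuine: with $r=i+p-1=kn+\ell'm+j$, the bracket $r(k+\ell')-(r-j)-\ell'(r-m)$ simplifies to $k\bigl((k-1)n+\ell'm+j\bigr)$, so the sum is $\pm\binom{k+\ell'-1}{\ell'}\bigl((k-1)n+\ell'm+j\bigr)$, agreeing exactly with the Wu-formula answer, and the degenerate cases $j=0$ and $\ell'=0$ give the same expression with the appropriate factorial ratio, still a ratio of factorials of integers below $p$. Your reduction to $H^\ast(BU)$ is legitimate because the target monomial involves neither $c_1$ nor any $c_t$ with $t>n$, and your explicit exclusion of $\ell'=j=0$ correctly supplies the strict positivity of $(k-1)n+\ell'm+j$ that the paper obtains from the identity $(k-1)n+\ell'm+j=i+p-1-n$ together with $p>kn-m$. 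What the paper's route buys is brevity: one term and no cancellation to verify. What yours buys is self-containedness, avoiding the citation to the Wu formula, at the cost of the sign- and factorial-bookkeeping you rightly identify as the main labor.
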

\begin{proof}
Recall the mod $p$ Wu formula \cite{MR454974}
\begin{align*}
\mathcal{P}^1c_i
=\sum_{2i_2+\cdots+ni_n=i+p-1}&(-1)^{i_2+\cdots+i_n-1}
\frac{(i_2+\cdots+i_n-1)!}{i_2!\cdots i_n!}\\
&\cdot\left(i+p-1-\frac{\sum_{j=2}^{i-1}(i+p-1-j)i_j}{i_2+\cdots+i_n-1}\right)
c_2^{i_2}\cdots c_n^{i_n}.\notag
\end{align*}
Let $\ell''=\ell'$ if $j=0$ and $\ell''=\ell'+1$ otherwise.
Since we have
\begin{align*}
&i+p-1-\frac{\sum_{j=2}^{i-1}(i+p-1-j)i_j}{i_2+\cdots+i_n-1}\\
&=
kn+\ell'm+j-\frac{(kn+\ell'm+j)\ell''-j-\ell'm}{k+\ell''-1}\\
&=
\frac{k((k-1)n+\ell'm+j)}{k+\ell''-1},
\end{align*}
we can compute the coefficient of $c_jc_m^{\ell'}c_n^k$ as
\begin{align*}
&(-1)^{k+\ell''-1}
	\frac{(k+\ell''-1)!}{k!\ell'!}
	\cdot\frac{k((k-1)n+\ell'm+j)}{k+\ell''-1}\\
&=(-1)^{k+\ell''-1}\frac{(k+\ell''-2)!}{(k-1)!\ell'!}((k-1)n+\ell'm+j).
\end{align*}
Now we have
\begin{align*}
k+\ell''-2
<
\frac{p}{3}+1+\frac{p}{2}-2
<p
\quad
\text{and}
\quad
0<(k-1)n+\ell'm+j=i+p-1-n<p
\end{align*}
and hence the coefficient of $c_jc_m^{\ell'}c_n^k$ is nonzero.
\end{proof}

This lemma implies that the coefficient of $(c_n^B)^kc_j^F(c_m^F)^{\ell'}$ in $\mathcal{P}^1\Psi^\ast(1\times c_i)$ is nonzero.
But it cannot happen as follows.

\begin{lem}
The coefficient of $(c_n^B)^kc_j^F(c_m^F)^{\ell'}$ ($c^F_0=1$ when $j=0$) in $\mathcal{P}^1\Psi^\ast(1\times c_i)$ is zero.
\end{lem}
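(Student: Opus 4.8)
\emph{Proof sketch (plan).}
The plan is to compute $\mathcal{P}^1\Psi^\ast(1\times c_i)$ from the description of $\Psi^\ast(1\times c_i)$ in \eqref{align_F(1xc)} together with the Cartan formula, and to verify term by term that the monomial $(c_n^B)^kc_j^F(c_m^F)^{\ell'}$ cannot occur. Write $\Psi^\ast(1\times c_i)=c_i^B+P$, where by \eqref{align_F(1xc)} every monomial of $P$ is divisible by one of $c_{m+1}^B,\ldots,c_{i-1}^B$. Since $\mathcal{P}^1$ satisfies $\mathcal{P}^1(xy)=(\mathcal{P}^1x)\,y+x\,(\mathcal{P}^1y)$,
\[
\mathcal{P}^1\Psi^\ast(1\times c_i)=\mathcal{P}^1c_i^B+\mathcal{P}^1P,
\]
and I would bound the coefficient of $(c_n^B)^kc_j^F(c_m^F)^{\ell'}$ in each summand using the description of the relevant submodule of $H^\ast(\mathcal{E})$ in Lemma \ref{lem_submodule_H(B_lE)}: it is free over $R:=\mathbb{F}_p[c_2^B,\ldots,c_n^B]/(c_2^B,\ldots,c_n^B)^{k+1}$ on the monomials $(c_2^F)^{i_2}\cdots(c_m^F)^{i_m}$ with $i_2+\cdots+i_m\le\ell-1$, so that ``the coefficient of $(c_n^B)^kc_j^F(c_m^F)^{\ell'}$'' is unambiguous.

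The summand $\mathcal{P}^1c_i^B$ is pulled back from $B_k\SU(n)$, hence involves no factor of any $c_s^F$, so its coefficient on any monomial of positive $c^F$-degree is zero. The target monomial has positive $c^F$-degree (equal to $\ell'+1$ if $j\neq0$ and to $\ell'$ if $j=0$); the only excluded case $j=0$, $\ell'=0$ would force $i+p-1=kn$ and hence $p\le kn-m$, against the hypothesis $kn-m<p$. So $\mathcal{P}^1c_i^B$ contributes nothing.

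For $\mathcal{P}^1P$, decompose $P$ into monomials $c_t^B\cdot N$ with $m+1\le t\le i-1$ and apply the Cartan formula, $\mathcal{P}^1(c_t^BN)=(\mathcal{P}^1c_t^B)\,N+c_t^B\,(\mathcal{P}^1N)$. Every monomial of $c_t^B(\mathcal{P}^1N)$ is divisible by $c_t^B$ with $t<n$, and no relation in $R$ removes such a factor, so none of them equals $(c_n^B)^kc_j^F(c_m^F)^{\ell'}$, whose $c^B$-part is $(c_n^B)^k$. In $(\mathcal{P}^1c_t^B)N$ the factor $N$ has degree $2(i-t)<2n$ (since $i\le n$ and $t\ge1$), hence contains no $c_n^B$, so all $k$ copies of $c_n^B$ come from $\mathcal{P}^1c_t^B$; since the latter carries no $c^F$, matching the $c^B$- and $c^F$-parts with the target forces $2(t+p-1)=2kn$ and $2(i-t)=2(j+\ell'm)$, each of which gives $t=kn-p+1$. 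The key point is that the hypothesis $kn-m<p$ makes $kn-p+1<m+1$, incompatible with $m+1\le t$; so no such divisor exists and $\mathcal{P}^1P$ contributes nothing either. Therefore the coefficient of $(c_n^B)^kc_j^F(c_m^F)^{\ell'}$ in $\mathcal{P}^1\Psi^\ast(1\times c_i)$ is zero, which, together with the previous lemma, is the contradiction sought in the proof of Theorem \ref{thm_non-normal_SU}.

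The main obstacle, I expect, is keeping the bookkeeping honest in the presence of the relations of $H^\ast(\mathcal{E})$: one must make sure that ``matching $c^B$- and $c^F$-parts'' is legitimate — i.e.\ that the freeness in Lemma \ref{lem_submodule_H(B_lE)} genuinely lets one read off the coefficient of a single monomial — and that $\mathcal{P}^1c_t^B$, $\mathcal{P}^1N$ cannot smuggle a contribution in through the complementary summands $S$, $S^F$, which is controlled because those map to zero in $H^\ast(\mathscr{B}_{\ell-1}E)$ and are Steenrod-stable.
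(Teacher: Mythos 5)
Your proposal is correct and follows essentially the same route as the paper: apply the Cartan formula to $\Psi^\ast(1\times c_i)=c_i^B+P$, observe that any contribution to $(c_n^B)^kc_j^F(c_m^F)^{\ell'}$ would have to arise from a term $(\mathcal{P}^1c_t^B)\,c_j^F(c_m^F)^{\ell'}$ with $m+1\le t\le i-1$, and derive $t=kn-p+1\le m$ from the hypothesis $p>kn-m$, a contradiction. You spell out a few cases the paper leaves implicit (the $\mathcal{P}^1c_i^B$ summand and the $j=\ell'=0$ degeneration), but the key computation is identical.
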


\begin{proof}
Applying $\mathcal{P}^1$ to the equality (\ref{align_F(1xc)}), the term $(c_n^B)^kc_j^F(c_m^F)^{\ell'}$ comes from $(\mathcal{P}^1c_s^B)c_j^F(c_m^F)^{\ell'}$ with $s\ne n$.
Here we have $m<s<n$.
Since $\mathcal{P}^1c_s^B$ must contain the term $(c_n^B)^k$, $s$ is computed as $s=kn-(p-1)$.
But this contradicts to our assumption $s>m>kn-p$ in the theorem.
Thus the coefficient of $(c_n^B)^kc_j^F(c_m^F)^{\ell'}$ is zero.
\end{proof}

This contradicts to the previous result, completing the proof of Theorem \ref{thm_non-normal_SU}.

Let us examine the $p$-local normality of the inclusion $\SU(2)\to\SU(3)$ for small primes $p$.
By Theorems \ref{thm_normal_SU(2)} and \ref{thm_non-normal_SU}, we obtain Table \ref{table}, where \cmark\hspace{0.1em} and \xmark\hspace{0.1em} mean the inclusion is an $N_k(\ell)$-map and is not an $N_k(\ell)$-map, respectively, and $?$ means we cannot determine the normality from these theorems.
\begin{table}[h]
    \begin{subtable}{0.3\textwidth}
        \centering
\begin{tabular}{c|ccccc} 
	$k$ & $1$ & $2$ & $3$ & $4$ & $5$ \\ 
	\hline
	$N_k(1)$ & \xmark & \xmark & \xmark & \xmark & \xmark \\ 
	$N_k(2)$ & \xmark & \xmark & \xmark & \xmark & \xmark \\ 
	$N_k(3)$ & \xmark & \xmark & \xmark & \xmark & \xmark \\
	$N_k(4)$ & \xmark & \xmark & \xmark & \xmark & \xmark \\
	$N_k(5)$ & \xmark & \xmark & \xmark & \xmark & \xmark
\end{tabular}
    $p=3$
    \end{subtable}
    \begin{subtable}{0.3\textwidth}
        \centering
\begin{tabular}{c|ccccc} 
	$k$ & $1$ & $2$ & $3$ & $4$ & $5$ \\ 
	\hline
	$N_k(1)$ & \cmark & ? & ? & ? & ? \\ 
	$N_k(2)$ & \xmark & \xmark & \xmark & \xmark & \xmark \\
	$N_k(3)$ & \xmark & \xmark & \xmark & \xmark & \xmark \\
	$N_k(4)$ & \xmark & \xmark & \xmark & \xmark & \xmark \\
	$N_k(5)$ & \xmark & \xmark & \xmark & \xmark & \xmark
\end{tabular}
    $p=5$
    \end{subtable}\\
\vspace{2ex}
    \begin{subtable}{0.3\textwidth}
        \centering
\begin{tabular}{c|ccccc} 
	$k$ & $1$ & $2$ & $3$ & $4$ & $5$ \\ 
	\hline
	$N_k(1)$ & \cmark & ? & ? & ? & ? \\ 
	$N_k(2)$ & \cmark & \xmark & \xmark & \xmark & \xmark \\
	$N_k(3)$ & \xmark & \xmark & \xmark & \xmark & \xmark \\
	$N_k(4)$ & \xmark & \xmark & \xmark & \xmark & \xmark \\
	$N_k(5)$ & \xmark & \xmark & \xmark & \xmark & \xmark
\end{tabular}
    $p=7$
    \end{subtable}
    \begin{subtable}{0.3\textwidth}
        \centering
\begin{tabular}{c|ccccc} 
	$k$ & $1$ & $2$ & $3$ & $4$ & $5$ \\ 
	\hline
	$N_k(1)$ & \cmark & \cmark & \cmark & ? & ? \\ 
	$N_k(2)$ & \cmark & \cmark & \xmark & \xmark & \xmark \\
	$N_k(3)$ & \cmark & ? & \xmark & \xmark & \xmark \\
	$N_k(4)$ & \cmark & \xmark & \xmark & \xmark & \xmark \\
	$N_k(5)$ & \xmark & \xmark & \xmark & \xmark & \xmark
\end{tabular}
    $p=11$
    \end{subtable}
\vspace{2ex}
\caption{The $p$-local higher homotopy normality of $\SU(2)\subset\SU(3)$}
\label{table}
\end{table}

We leave the remaining cases for now.
The first undetermined cases are as follows.
\begin{problem}
Determine whether the inclusion $\SU(2)\to\SU(3)$ is $5$-locally an $N_2(1)$-map or not.
More generally, find methods to determine when the inclusion $\SU(m)\to\SU(n)$ is $p$-locally an $N_k(1)$-map.
\end{problem}
We should note that $E_2\SU(2)\times_{\conj}\SU(2)$ is not $5$-locally fiberwise based equivalent to the trivial bundle though its restriction $E_1\SU(2)\times_{\conj}\SU(2)$ over $B_1\SU(2)$ is $5$-locally trivial.

We close this paper giving the corresponding results for $\SO(2m+1)\to\SO(2n+1)$, which is $p$-locally homotopy equivalent to $\Sp(m)\to\Sp(n)$ for odd $p$.
The proof proceeds as Theorems \ref{thm_non-normal_SU} using the mod $p$ Wu formula 
\begin{align*}
\mathcal{P}^1p_i
=\sum_{i_1+\cdots+ni_n=i+\frac{p-1}{2}}&(-1)^{i_1+\cdots+i_n-1+i+\frac{p-1}{2}}
\frac{(i_1+\cdots+i_n-1)!}{i_1!\cdots i_n!}\\
&\cdot\left(2i+p-1-\frac{\sum_{j=1}^{i-1}(2i+p-1-2j)i_j}{i_1+\cdots+i_n-1}\right)
p_1^{i_1}\cdots p_n^{i_n}
\end{align*}
for the Pontryagin classes $p_1,\ldots,p_n$.

\begin{thm}
\label{thm_non-normal_Sp}
Suppose $\max\{2kn-2m,2(k-1)n+2\}<p\le2kn+2(\ell-1)m-1$ for some $1\le m<n$ and $k,\ell\ge1$, then the inclusion $\SO(2m+1)\to\SO(2n+1)$ is not $p$-locally an $N_k(\ell)$-map.
\end{thm}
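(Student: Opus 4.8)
The plan is to rerun the argument of Theorems \ref{thm_non-normal_SU} and \ref{thm_non-normal_SU(2)} with $\Sp(m)\to\Sp(n)$ in place of $\SU(m)\to\SU(n)$, with Pontryagin classes $p_i$ (of degree $4i$) in place of Chern classes, and with the displayed mod $p$ Wu formula for $\mathcal{P}^1 p_i$ in place of the one for $\mathcal{P}^1 c_i$; since $\SO(2m+1)\to\SO(2n+1)$ is $p$-locally equivalent to $\Sp(m)\to\Sp(n)$ for odd $p$, this yields the stated theorem. Here $H^\ast(B\Sp(n))=\mathbb{F}_p[p_1,\dots,p_n]$, and as the quaternionic analogue of the fact used for $\SU$, $H^\ast(B_k\Sp(n))=\mathbb{F}_p[p_1,\dots,p_n]/(p_1,\dots,p_n)^{k+1}\oplus S$ with $S$ mapping to $0$ in $H^\ast(B_{k-1}\Sp(n))$ and both summands closed under Steenrod operations.

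First I would suppose the inclusion is $p$-locally an $N_k(\ell)$-map and apply Theorem \ref{thm_Nkl_fwproj} to obtain a fiberwise pointed space $\mathcal{E}=\mathscr{B}_\ell E$ over $B_k\Sp(n)$ and fiberwise pointed maps $\Phi\colon E_k\Sp(m)\times_{\Sp(m)}B_\ell\Sp(m)\to\mathcal{E}_{B_k\Sp(m)}$ and $\Psi\colon\mathcal{E}\to E_k\Sp(n)\times_{\Sp(n)}B\Sp(n)\cong B\Sp(n)\times B\Sp(n)$ with the three listed properties. Then I would establish the exact analogue of Lemma \ref{lem_submodule_H(B_lE)}: in the cohomology Serre spectral sequence of $\mathcal{E}\to B_k\Sp(n)$, the differentials vanish on the image of $\mathbb{F}_p[p_1^F,\dots,p_m^F]/(p_1^F,\dots,p_m^F)^{\ell+1}$ (because they do in the spectral sequence of $E\Sp(n)\times_{\conj}B\Sp(n)$), and the differentials on the complementary summand $S^F$ of $H^\ast(B_\ell\Sp(m))$ have image in the kernel of $H^\ast(\mathscr{B}_\ell E)\to H^\ast(\mathscr{B}_{\ell-1}E)$; hence there is a submodule
\[
\mathbb{F}_p[p_1^B,\dots,p_n^B]/(p_1^B,\dots,p_n^B)^{k+1}\otimes\mathbb{F}_p\{(p_1^F)^{i_1}\cdots(p_m^F)^{i_m}\mid 0\le i_1+\cdots+i_m\le\ell-1\}\subset H^\ast(\mathcal{E})
\]
closed under multiplication by base classes and under Steenrod operations (when $m=1$ one has $S^F=0$ since $\Sp(1)=S^3$, but this submodule already suffices for the stated range of $p$).

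Now set $P=\tfrac{p-1}{2}$. The hypotheses give $P\ge(k-1)n+1$ and $(m+1)+P\le kn+(\ell-1)m$, so the integer interval $[(m+1)+P,\,n+P]$ meets $[kn,\,kn+(\ell-1)m]$; I would pick $i\in[m+1,n]$ with $i+P\in[kn,kn+(\ell-1)m]$ and write $i+P-kn=\ell'm+j_0$ with $0\le\ell'\le\ell-1$ and $0\le j_0<m$, so that $(k-1)n+\ell'm+j_0=i+P-n$ satisfies $0<i+P-n<p$, while $k-1\le P/n$ and $\ell'\le P/m$ force $k+\ell'-1<p$. Feeding the monomial $p_{j_0}p_m^{\ell'}p_n^k$ (with $p_0=1$) into the Pontryagin Wu formula exactly as in the two unlabelled lemmas in the proof of Theorems \ref{thm_non-normal_SU} and \ref{thm_non-normal_SU(2)}, the coefficient of $p_{j_0}p_m^{\ell'}p_n^k$ in $\mathcal{P}^1 p_i$ works out, up to sign, to $\tfrac{2\,(k+\ell''-2)!}{(k-1)!\,\ell'!}\bigl((k-1)n+\ell'm+j_0\bigr)$ with $\ell''=\ell'$ if $j_0=0$ and $\ell''=\ell'+1$ otherwise, which is a nonzero element of $\mathbb{F}_p$ because every factorial appearing has argument $<p$ and $p$ is odd. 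Since $\Psi$ is fiberwise pointed and $\Psi\circ\Phi$ realizes the map induced by $f$, we have $\Psi^\ast(1\times p_i)=p_i^B+(\text{terms divisible by some }p_{m+1}^B,\dots,p_{i-1}^B)$, so $\mathcal{P}^1\Psi^\ast(1\times p_i)$ has nonzero coefficient on $(p_n^B)^k p_{j_0}^F(p_m^F)^{\ell'}$; but such a monomial can only arise from $(\mathcal{P}^1 M)\,p_{j_0}^F(p_m^F)^{\ell'}$ with $M$ a base monomial of degree $4(kn-P)$ divisible by some $p_s^B$, $m<s<i$, and the degree constraint $\mathcal{P}^1 M\ni(p_n^B)^k$ forces $s\le kn-P\le m$ (using $p>2kn-2m$), a contradiction. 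This proves the theorem, the $\SO$-form following from the $p$-local equivalence with $\Sp$.

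The main obstacle is entirely computational: carrying the halved degree $\tfrac{p-1}{2}$ and the extra factors of $2$ through the Pontryagin Wu formula without slips, checking that the combinatorial coefficient above genuinely survives mod $p$ throughout the claimed range (including the boundary cases of the inequalities, the possibility $\ell'=j_0=0$, and the small case $m=1$), and confirming that no more elaborate choice of base monomial $M$ can evade the degree obstruction in the final step. Conceptually nothing new is needed beyond the $\SU$ case; the work lies in redoing the estimates with the symplectic dimensions.
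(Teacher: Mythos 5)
Your proposal is correct and follows exactly the route the paper intends: the paper's own proof of Theorem \ref{thm_non-normal_Sp} is just the remark that the argument of Theorems \ref{thm_non-normal_SU} and \ref{thm_non-normal_SU(2)} carries over to $\Sp(m)\to\Sp(n)$ via the displayed Pontryagin Wu formula, which is precisely what you carry out (and your coefficient $\frac{2(k+\ell''-2)!}{(k-1)!\,\ell'!}\bigl((k-1)n+\ell'm+j_0\bigr)$ and the degree checks with $P=\tfrac{p-1}{2}$ are the right symplectic analogues). The only points needing care are the ones you already flag, e.g.\ choosing $i+P\ge kn+1$ when $k=1$ so that $(k-1)n+\ell'm+j_0>0$ (possible since the theorem is vacuous for $\ell=1$).
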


\bibliographystyle{alpha}
\bibliography{2021normality}
\end{document}